\newcommand{\Ad}{\mathrm{Ad}}
\newcommand{\Sp}{\mathrm{Sp}}
\newcommand{\bP}{\mathbb P}
\newcommand{\sdet}{\mathrm{sdet}}
\newcommand{\bC}{\mathbb C}
\newcommand{\bQ}{\mathbb Q}
\newcommand{\bR}{\mathbb R}
\newcommand{\bZ}{\mathbb Z}
\newcommand{\cO}{\mathcal O}
\theoremstyle{remark}
\newtheorem{remark}{Remark}[section]
\theoremstyle{definition}
\newtheorem{definition}[remark]{Definition}
\theoremstyle{plain}
\newtheorem{theorem}[remark]{Theorem}
\newtheorem{corollary}[remark]{Corollary}
\newtheorem{lemma}{Lemma}[section]
\newtheorem{conjecture}[remark]{Conjecture}
\newtheorem{proposition}[remark]{Proposition}
\newtheorem*{proposition21}{Proposition 2.10}
\newtheorem*{proposition22}{Proposition 2.13}
\newtheorem*{thm4.2}{Theorem 4.2}
\newtheorem*{theorem1.1}{Theorem 1.1}
\begin{document}

\title{Completion of a period map of Hodge type $(1,2,2,1)$}
\author{Chongyao Chen}
\address{Mathematics Department, Duke University,  Durham, NC 27708}
\email{cychen@math.duke.edu}
\date{\today}
\thanks{This material is based upon work supported by the National Science Foundation under Grant No. DMS-1906352.}
\keywords{variation of Hodge structure, mirror symmetry, Kato-Usui completion}
\subjclass[2020]{14J32,14J33,32G20}

\maketitle

\begin{abstract}
    We give a completion of the period map associated to a variation of polarized Hodge structure arising from a 2-dimensional geometric family that has Hodge type $(1,2,2,1)$. This is the second known example of a completion of a period map that is higher than one dimensional with non-hermitian symmetric Mumford-Tate domain. The completion technique we use is the theory of Kato-Usui spaces, while the calculation of monodromy matrices is done with the help of mirror symmetry.
\end{abstract}

\setcounter{section}{0}
\setcounter{tocdepth}{2}
\tableofcontents

\section{Introduction}

In this paper, we consider a family of smooth Calabi-Yau 3-folds with $h^{2,1}=2$. The family is constructed as the mirror to the generic (resolved) Calabi-Yau hypersurface in $\bP[1,1,2,2,2]$, and thus is sometimes called the \textit{mirror octic}. Let $\Phi:B\rightarrow \Gamma\backslash D$ be the associated period map. 

\begin{theorem}
There is a finite cover $B'\xrightarrow{}B$ with the property that the lifted period map $\Phi':B'\rightarrow \Gamma'\backslash D$ admits a Kato-Usui type completion in the sense of Theorem 1.3. In particular, points at infinity parametrize nilpotent orbits. Moreover, at a generic point of $B$ (or $B'$) the real special Mumford-Tate group is $Sp(6,\bR)$.
\end{theorem}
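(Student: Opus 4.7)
The plan is to apply the Kato--Usui completion machinery summarized in Theorem 1.3, which requires three ingredients for the period map: unipotent local monodromy after a finite cover, a strongly compatible rational fan $\Sigma$ of nilpotent cones in $\fsp(6,\bR)$, and polarizability of each cone. I will assemble these ingredients for the mirror octic using mirror symmetry to compute monodromy, and then verify the axioms cone by cone via Propositions 2.10 and 2.13.

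The first step is to fix a smooth projective compactification $\bar B$ of the 2-dimensional complex moduli whose boundary is simple normal crossings. For the mirror octic the relevant boundary configuration is classical: it contains a large complex structure limit (LCSL) point, a conifold / principal discriminant component, and an additional orbifold divisor inherited from the discrete quotient in the mirror construction. Around each boundary divisor I would read off the local monodromy $T_i$ by analyzing the Picard--Fuchs system, using mirror symmetry and the integral structure on the A-model side to pin down the matrices; in particular, the prepotential and Yukawa couplings on the K\"ahler cone of the resolved hypersurface in $\bP[1,1,2,2,2]$ determine $T_i$ in an explicit integral basis. After passing to a finite cover $B'\to B$ absorbing the finite-order contribution at the orbifold divisor, each $T_i$ becomes unipotent, and I set $N_i = \log T_i$.

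Next I would build $\Sigma$ as the fan generated by cones $\sum_{i\in I}\bR_{\geq 0}N_i$ over boundary strata $I$, closed under faces and under the $\Gamma'$-action. The main content is to check that each generated cone is polarizable: for every $N$ in the interior, the weight filtration $W(N)$ must underlie a polarized mixed Hodge structure compatible with the Hodge filtration, which is precisely the criterion of Proposition 2.13. This reduces via the $SL_2$-orbit theorem to the statement that the relative weight filtrations of partial sums $N_{i_1}+\cdots+N_{i_k}$ are well-defined, to be verified directly from the explicit matrices. The main obstacle is the mixed strata involving the conifold divisor, where the limit MHS is not Hodge--Tate and the combinatorial structure of how an LCSL-type nilpotent (with $N^3\neq 0$) and a rank-one Picard--Lefschetz nilpotent assemble into a polarized two-dimensional cone inside $\fsp(6)$ has to be checked by hand.

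For the Mumford--Tate statement, I would compute the Zariski closure of the monodromy representation. Andr\'e's theorem guarantees that the derived generic Mumford--Tate group contains the connected monodromy group, so it suffices to show that the subalgebra generated by the $N_i$ together with their $\Gamma'$-conjugates spans $\fsp(6,\bR)$ as a Lie algebra. The LCSL nilpotent with $N^3\neq 0$ and a conifold Picard--Lefschetz reflection jointly preserve no nontrivial rational Hodge substructure of type $(1,2,2,1)$, and the polarization rules out any alternative invariant bilinear form; these two facts together exclude every proper reductive $\bQ$-subgroup of $\Sp(6)$, forcing the generic real special Mumford--Tate group to equal $\Sp(6,\bR)$.
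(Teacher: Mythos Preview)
Your proposal misidentifies the central difficulty. You assert that ``the main content is to check that each generated cone is polarizable,'' but for cones arising from degenerations of a geometric VHS this is automatic by Schmid's nilpotent orbit theorem: the limiting filtration always pairs with the monodromy weight filtration to give a polarized mixed Hodge structure. Polarizability is not in question here. What you treat as a passing remark---that $\Sigma$ should be ``closed under faces and under the $\Gamma'$-action''---is in fact the entire problem. Taking the $\Gamma'$-adjoint orbit of the boundary cones need not yield a fan: distinct $\Gamma'$-translates of the cones may intersect in their interiors, and when this happens the Kato--Usui construction fails. The paper's Section 4 is devoted precisely to ruling this out, via a case-by-case analysis (organized by the limiting mixed Hodge types $IV_2$, $II_1$, $I_2$) showing that for a suitable finite-index $\Gamma_n\le \Sp(6,\bZ)$ the adjoint orbit of the seven nilpotent cones has no nontrivial interior intersections. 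This requires the explicit integral monodromy matrices at \emph{all} boundary strata (not just the LCSL and conifold), computed numerically in Section 3 and then made exact, followed by algebraic arguments in $\fsp(6)$ for each pair of cone types. Your outline contains no mechanism to address this.

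A secondary issue: your references to Propositions 2.10 and 2.13 are off. In the paper these concern the construction of an integral symplectic basis and the computation of $T_{m_1}$, $T_{m_2}$ at the MUM point; neither gives a polarizability criterion. For the Mumford--Tate statement, your strategy via Andr\'e's theorem and Lie-algebra generation is reasonable in principle, but the paper takes a shorter route (Corollary 2.4): it invokes the classification of Hodge representations of type $(1,2,2,1)$ and observes that the coexistence of a type $IV_2$ (indecomposable Hodge--Tate) degeneration and a type $I$ degeneration already excludes every proper Mumford--Tate subgroup on the list.
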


As far as we know, this is the second known example of the completion of a period map with the properties that $\dim \Phi(B)\geq 2$ and the Mumford-Tate domain is not hermitian symmetric. The first known example is given by Deng \cite{deng2021extension}. More recently, Deng gives a partial compactification of period maps of Calabi-Yau type along boundary strata with type I or type IV degeneracy \cite{deng2023space}.  It is work in progress to show that every two-parameter period map can be completed using nilpotent orbits \cite{deng2023space2}.

\subsection{Summary of previous results in compactification problem}

Let $D$ be a period domain parametrizing $Q$-polarized Hodge structures, and $\Gamma$ be a discrete subgroup of $ Aut_\bZ(D)=Aut_\bZ(Q)$. As proposed by Griffiths  \cite{griffiths1970periods}, an important problem in this setting is giving a horizontal completion of $\Gamma\backslash D$. For any pair $(D,\Gamma)$, this would provide, via an extended period map, Hodge theoretical information about the boundary of a moduli space. Since (or even before) the problem was proposed, a lot of progress has been made.

When $D$ is a hermitian symmetric domain, the Hodge numbers are $(g,g)$ and $(1,g,1)$, $g\geq 1$.  Geometrically, they correspond to the moduli spaces of principally polarized abelian
varieties, and of polarized K3 surfaces ($g = 19$), respectively. In this circumstance, one has the Bailey-Borel compactification $(\Gamma\setminus D)^*$ \cite{baily1966compactification}. The Bailey-Borel compactification has the advantage of being both canonical and projective, but the disadvantage of being singular. Mumford et al introduced the toroidal normalization $\overline{\Gamma\setminus D}^\Sigma$ of $(\Gamma\setminus D)^*$ \cite{ash1975smooth}, where the points at infinity can be characterized as $\Gamma$-equivalence classes of nilpotent orbits. By Schmid's nilpotent orbit theorem  \cite{schmid1973variation}, these nilpotent orbits asymptotically approximate period maps at infinity. This construction requires a choice of a fan $\Sigma$, and there is always a suitable choice that can make $\overline{\Gamma\setminus D}^\Sigma$ projective with at most finite quotient singularities.

When $D$ is not hermitian one does not expect to have a good compactification of $\Gamma\backslash D$. However, one may still try to complete the period map. A topological     Satake-Bailey-Borel type completion of a period map is given in \cite{green2014extremal,green2017period}. (The construction is conjectured to be algebraic.) Kato-Usui proposed a generalization of the toroidal compactification \cite{kato2008classifying}. 
Consider a period domain $D$ and an arithmetic subgroup $\Gamma\leq Aut_\bZ(D)$, and a choice of polyhedral fan $\Sigma\subset End_\bQ(D)$ consisting of nilpotent operators. Schmid's nilpotent theorem suggests a horizontal completion of the period domain by adding nilpotent orbits 
    $$
    D_\Sigma = \{(\sigma,Z_\sigma):\sigma\in\Sigma,Z_\sigma \text{is a nilpotent orbit}\}/\{\text{reparametrization}\}
    $$
 If $(D,\Gamma,\Sigma)$ satisfies certain properties (see below), then Kato-Usui showed $\Gamma\setminus D_{\Sigma}$ admits a Hausdorff logarithmic analytic structure. After that, they conjectured the existence of a complete fan $\Sigma$ and a complete weak fan.  If either of these fans exists, then the period map $\Phi:B\rightarrow\Gamma\setminus D$ can be completed. (For a more precise statement, see Theorem \ref{KTtheorem} below.) For hermitian symmetric period domains, the existence of a complete fan is shown in \cite{kato2008classifying}. However, for non-classical cases counter-examples have been constructed in \cite{watanabe2008counterexample, deng2021extension}. For more details about Kato-Usui's theory and definitions for the objects above, we refer to Chapter 0 of \cite{kato2008classifying}. For recent progress on using Kato-Usui's theory to complete period maps see \cite{deng2023space,deng2023space2}.

\subsection{Outline of the completion approach}\label{olapp}

While the complete fan conjecture fails in general, Kato-Usui's theory can still be applied to complete a single period map as Deng did in \cite{deng2021extension}. Recall
\begin{definition}
    We say $\Sigma$ is \emph{strongly $\Gamma$-compatible} if 
    \begin{enumerate}
        \item $\Sigma$ is closed under the  action of $Ad(\Gamma)$.
        \item For any $\sigma\in\Sigma$, $\sigma$ is generated by $\log(\Gamma(\sigma))$, where $\Gamma(\sigma): = \Gamma\cap\exp(\sigma)$.
    \end{enumerate}
    \end{definition}
    The main theorem we will be using is
    \begin{theorem}[Kato-Usui \cite{kato2008classifying}]\label{KTtheorem}
    For a pair $(\Sigma,\Gamma)$ with  $\Gamma$ neat, if $\Sigma$ is strongly $\Gamma$-compatible, then 
    \begin{enumerate}
        \item The space $\Gamma\setminus D_\Sigma$ admits a structure of logarithmic manifold.
        \item Given a polarized VHS over a smooth quasi-projective base, let $\mathcal P:B\rightarrow \Gamma\setminus D$ be the period map. Fix a smooth projective completion $B\subset\bar B$ with $\bar B\setminus B$ a normal crossing divisor of $\bar B$. If the nilpotent cones associated to each boundary stratum lie   in $\Sigma$, then the period map extends to $\overline{\mathcal P}: \bar B\rightarrow \Gamma\backslash D_\Sigma$
    \end{enumerate}
    \end{theorem}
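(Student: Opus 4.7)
The statement has two independent components: building a logarithmic manifold structure on $\Gamma\backslash D_\Sigma$, and extending a period map across a normal crossing boundary. The plan is to handle each component in turn, with the analytic input throughout supplied by Schmid's nilpotent orbit theorem and its several-variable refinement by Cattani--Kaplan--Schmid.

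For part (1), the plan is to construct $\Gamma\backslash D_\Sigma$ by gluing local models indexed by cones $\sigma\in\Sigma$. Fix such a $\sigma$ and let $\Gamma(\sigma)=\Gamma\cap\exp(\sigma)$; strong $\Gamma$-compatibility ensures that $\sigma$ is rational and is spanned by $\log\Gamma(\sigma)$. Let $\check D$ be the compact dual. I would form the affine toric variety $\mathrm{toric}_\sigma=\Spec\bC[\Gamma(\sigma)^\vee\cap\sigma^\vee]$ and, inside $\mathrm{toric}_\sigma\times\check D$, identify the locus over the open torus with an open subset of $D$ via $(q,F)\mapsto\exp\!\bigl(\tfrac{1}{2\pi i}\log q\cdot N\bigr)F$; the toric boundary $q_i=0$ then parametrizes exactly the desired nilpotent orbits. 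Declaring this to be a log-analytic chart endows $D_\Sigma$ with a log structure. Neatness of $\Gamma$, together with the fact that each $\Gamma(\sigma)$ acts by toric automorphisms on the chart (again a consequence of strong $\Gamma$-compatibility), makes the quotient into a log manifold. Face relations $\tau\leq\sigma$ correspond to toric open embeddings of $\mathrm{toric}_\tau$ into $\mathrm{toric}_\sigma$, which handles the gluing.

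For part (2), the plan is to read off the extension from the several-variable nilpotent orbit theorem. Around $p\in\bar B\setminus B$ choose coordinates $(t_1,\dots,t_k,s_1,\dots,s_m)$ so that the normal crossing divisor is cut out by $t_1\cdots t_k=0$, and let $N_j$ be the logarithms of the unipotent parts of the monodromies around $t_j=0$. By hypothesis the cone $\sigma_p:=\sum_j \bR_{\geq 0}N_j$ lies in $\Sigma$ (after a $\Gamma$-translate). The nilpotent orbit theorem then produces a limit filtration $F_p\in\check D$ such that, on a suitable polydisk, the period map is uniformly close to $\exp\!\bigl(\sum_j z_j N_j\bigr)F_p$ as $t_j\to 0$. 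Under the chart of part (1), this limiting data is precisely the pair $(\sigma_p,\exp(\sigma_{p,\bC})F_p)$, so setting $\overline{\mathcal P}(p)=[(\sigma_p,\exp(\sigma_{p,\bC})F_p)]$ is well-defined, and the quantitative estimates in the nilpotent orbit theorem (with holomorphicity in the $s_i$ and log-holomorphicity in the $t_j$) give the required continuity and log-analyticity of $\overline{\mathcal P}$.

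The main obstacle is the Hausdorffness of $\Gamma\backslash D_\Sigma$ in part (1); this is the technical heart of Kato--Usui. Two nilpotent orbits carried by different cones could in principle have closures meeting at a common Hodge filtration, and one must show that any such overlap forces $\Gamma$-equivalence. My plan would be to stratify the closure of a nilpotent orbit by the relative weight filtrations of its limit mixed Hodge structures, invoke the $SL_2$-orbit theorem of Cattani--Kaplan--Schmid to obtain rigidity of these filtrations, and then use the polarization together with neatness of $\Gamma$ to promote approximate coincidence of orbits to honest $\Gamma$-equivalence. Once this rigidity is in place, the remaining ingredients---functoriality of the toric local models, existence and uniqueness of the limit filtration, and the passage from uniform asymptotics to log-holomorphicity---fall into line.
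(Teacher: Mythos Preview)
The paper does not prove this theorem; it is stated as a result of Kato--Usui and cited directly from \cite{kato2008classifying}, then used as a black box in the proof of Theorem~1.1. So there is no ``paper's own proof'' to compare against.

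That said, your sketch is a reasonable outline of what Kato--Usui actually do, and you have correctly identified the architecture: toric local models $E_\sigma$ built from $\mathrm{toric}_\sigma\times\check D$, gluing along face relations, and the extension via Schmid's nilpotent orbit theorem. You are also right that Hausdorffness of $\Gamma\backslash D_\Sigma$ is the hard part. One caution: your proposed attack on Hausdorffness (``stratify by relative weight filtrations, invoke $SL_2$-orbit rigidity, promote approximate coincidence to $\Gamma$-equivalence'') is more of a slogan than a plan. In Kato--Usui the separation argument runs through a careful study of the space $E_\sigma^\sharp$ and a valuative criterion for log manifolds, together with delicate properties of the map $E_\sigma\to\Gamma(\sigma)^{\mathrm{gp}}\backslash D_\sigma$ being a \emph{$\sigma$-torsor}; the $SL_2$-orbit theorem enters, but not in the way you suggest. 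If you were actually writing this up you would need to engage with their notion of ``log manifold'' (which is weaker than log smooth scheme) and the specific topology they put on $D_\Sigma$, neither of which is determined by your chart description alone. For the purposes of this paper, though, citing \cite{kato2008classifying} is the appropriate move.
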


For the VHS $(L,\mathcal F^\bullet,\nabla,\mathcal Q)$ associated to the family mirror quintic, we can conduct the following process to determine whether its associated period map can be completed via Theorem \ref{KTtheorem}. 

\begin{enumerate}
    \item Fix a completion  $B\xhookrightarrow{}\tilde{B}$ such that $\partial B:=\tilde B\setminus B$ consists of simple normal crossing divisors in $\tilde B$.
    \item Choose a multivalued frame of $\mathcal F^0 = L\otimes_\bQ \mathcal O_B$ and represent the Gauss-Manin connection $\nabla$ in this frame.
    \item Fix a base point $b\in B$. Then at each normal crossing intersection point of $\tilde B$, we calculate the two local monodromy matrices with respect to the local basis, then pull them back to $b$ by parallel transportation via $\nabla$. 
    \item Find an integral symplectic basis of $V:=L_b\otimes_\bQ\bC$ at $b$, and represent all pairs of monodromy operators as matrices with respect to this basis. Some finite power of the monodromy operator is unipotent.  We denote the fan formed by the nilpotent logarithms of these pairs as $\Sigma^{'}\subset End_\bQ(V)$.
    \item By passing to a finite {\'e}tale cover $U\rightarrow B$, and taking $ Y\rightarrow \bar B$ the normalization of $\tilde B$ in the function field $k(U)$, we can assume $\Gamma$ is neat. As discussed in Remark \ref{ptfe}, after passing to $U$, and resolving any singularity in $Y$, the monodromy at infinity is unipotent, and the associated nilpotent cones are finite subdivisions of $\Sigma'$.
    \item Check whether there exists a $m\in\bZ_+$, such that, $\Sigma:=Ad(\Gamma_m)\cdot\Sigma'$ form a fan, i.e., there are no infinite interior intersections between nilpotent cones in $\Sigma'$ under the adjoint action.\footnote{Instead of checking $\Sigma$ is closed under $Ad(\Gamma)$ action, with $\Gamma\leq Aut_\bZ(V)$ the monodromy group, it is normally easier to check for some finite index subgroups of $Aut_\bZ(V)$. Although this will force us to go to a finite cover.} Note $\Gamma_m: = \{g\in Aut_\bZ(V):g\equiv \mathrm{Id}\mod m\}$ is a finite index subgroup  of $Aut_\bZ(V)$.
    \item Then we can conclude, that the period image associated to a finite {\'e}tale cover of the initial VHS can be completed via Theorem \ref{KTtheorem}. 
\end{enumerate}
We note this process can be generalized to arbitrary $VHS$ without difficulties.

\subsection{Overview}

The mirror symmetry for octic 3-fold has been studied thoroughly in \cite{candelas1994mirror1} and \cite{cox1999mirror}, and will be an important ingredient for our calculation. More precisely, one way to conduct step (4) above is taking $b$ near the \textit{maximal unipotent monodromy (MUM)} point, then an integral symplectic basis can be found via mirror symmetry \cite{hosono2000local,hosono2004central,hosono2014determinantal}. We will recall the geometry and mirror symmetry for the family in Section \ref{secgeofam}. Meanwhile, we will complete the steps (1-2) and find the integral symplectic basis of step (4). We note that (genus 0) mirror symmetry has been established for toric complete intersections in \cite{lian1997mirror} and \cite{givental1998mirror}. However, we will not rely on this result; instead, we will prove the relevant consequences, namely Proposition \ref{lemhosin} and Proposition \ref{prop2.2sec2}, in the appendix.

In Section \ref{seccal} we calculate the monodromy operator numerically. The idea is similar to that of \cite{hosono2014determinantal}. However, we will use a different approximation algorithm, the linear approximation of parallel transportation. Next, we find the transformation matrix from the complex basis to an integral symplectic basis. The result is made precise by showing within the numerical accuracy the mentioned transformation matrix is the only integral matrix. Then passing to some covering space, one can make all the monodromy matrices to be unipotent.
After that, we glue the log-monodromy nilpotent cones together to form $\Sigma''$ and represent it in the integral symplectic basis, which is denoted as $\Sigma'$. These correspond to steps (3-5).

Finally, in Section \ref{Secinint} we show the $\Gamma_6$-adjoint orbit of $\Sigma'$ has no infinite interior intersection which completes step (6). To summarize, the first 6 steps result in the following theorem
\begin{thm4.2}
There is a finite cover $B'\xrightarrow{}B$ such that the nilpotent fan $\Sigma'$ associated to the lifted period map $\Phi':B'\rightarrow \Gamma'\backslash D$ is strongly compatible with the monodromy group.
\end{thm4.2}
Then combined with Theorem \ref{KTtheorem}, we arrive at the first statement of our main result in Theorem 1.1. The Mumford-Tate genericity follows from Corollary \ref{coMT}, whose proof is essentially the same as Theorem 5.4 of \cite{deng2021extension}.

The notation in this paper will mainly follow that in \cite{candelas1994mirror1}, \cite{cox1999mirror} and \cite{deng2021extension}. For a quick introduction to (variation of) Hodge and mixed Hodge structures, we refer to \cite{robles2016degenerations}. For the details about the toric geometry we use, we refer to \cite{cox1999mirror,cox2011toric}. For the classification of limiting mixed Hodge structure, we follow the notation in \cite{kerr2017polarized}.

\subsection*{Acknowledgement} The author thanks his advisor Colleen Robles for the suggestion of the problem, and numerous instructive conversations, and also thanks Paul Aspinwall and Haohua Deng for helpful conversations.

\section{The geometry of the families}\label{secgeofam}
In this section, we review the geometry of the families that we will study and fix the notation. For details, we refer to \cite{candelas1994mirror1} and \cite{cox1999mirror}.

We are interested in the smooth 2-parameter family $\pi:X\rightarrow B$ that mirrors\footnote{As we will see, the mirror construction includes a natural extension of the family to a toric compactification of $B$, so the smooth family actually refers to the smooth locus of this family with degeneration.} to the resolved degree 8 hypersurfaces in $P[1,1,2,2,2]$.  Fix a  resolved degree 8 hypersurface $\hat X_0$ in $P[1,1,2,2,2]$, while also fix a point $p\in B$, and let $X_p:=\pi^{-1}(p)$. Both $\hat X_0$ and $X_p$ are smooth Calabi-Yau 3-folds, with Hodge numbers $h^{2,1}(X_p) = h^{1,1}(\hat X_0) = 2$.

\begin{definition}[Calabi-Yau 3-fold]\label{defcy3f}
A complex projective 3-fold $\hat X_0$ with at most Gorenstein singularities is a \textit{Calabi-Yau} 3-fold, if it has trivial canonical line bundle and $$H^1(\hat X_0,\mathcal O_{\hat X_0}) = H^2(\hat X_0,\mathcal O_{\hat X_0})=0.$$ In particular, we have vanishing Hodge numbers $h^{1,0} = h^{2,0} = 0$. 
\end{definition}

Mirror symmetry relates the K{\"a}hler geometry and K-theory of $\hat X_0$ and the complex geometry for the family $( X, B,  \pi)$, which are called A,B-models respectively.

\subsection{A-model geometry}\label{secamod}

We briefly review the K{\"a}hler geometry and K-theory of $\hat X_0$.

\subsubsection{Toric geometry for $\hat X_0$}\label{sec111}
We start with $\tilde X_0 := \{f_0=0\}$, where $f_0 = x_1^8+x_2^8+x_3^4+x_4^4+x_5^4$. Then $\hat X_0$ is resolved from $\tilde X_0$ by blowing up along the singular locus $x_1 = x_2 = 0$, it is sometimes called \textit{the resolved octic}. Denote the resolution by $\phi: \hat X_0  \rightarrow \tilde X_0$.
The polyhedral fan $\Sigma$ for the toric variety $ X_{\Sigma} = P[1,1,2,2,2]$ has 1-dimensional cones generated by
\begin{equation}\label{sigma1A}
\Sigma(1) = \{(-1,-2,-2,-2),(1,0,0,0),(0,1,0,0),(0,0,1,0),(0,0,0,1)\}.
\end{equation}
We denote the divisors corresponding to each of these vectors as $D_i,i=1,\dots,5$. The aforementioned blow-up can be done on the level of ambient space by subdividing the fan, where one adds the 1-dimensional cone generated by $(0,-1,-1,-1)$. This gives an exceptional divisor $D_6$. We denote the enlarged set of generators as $\Sigma'(1)$. The higher dimensional cones are modified correspondingly to make the fan simplicial, which is denoted by $\Sigma'$. This is the maximal projective simplicial subdivision.

\subsubsection{Hodge, Picard, and intersection numbers}
For $\hat X_0$, the Hodge numbers $h^{1,1}$ and $h^{1,2}$ can be determined by Theorem 4.3.1 in \cite{batyrev1993dual} (or formula (4.7), (4.8) in \cite{cox1999mirror}) via toric data, while $h^{1,0} = h^{2,0}=0$ can be derived from the Lefschetz hyperplane theorem. The remaining Hodge numbers are determined by the symmetry $h^{p,q} = h^{q,p}$ and the Serre duality. The resulting Hodge diamond is
\[
\left.\begin{array}{ccccccc}  &  &  &  h^{3,3} &  &  &  \\  &  & h^{3,2} &  & h^{2,3} &  &  \\ & h^{3,1} &  & h^{2,2} &  & h^{1,3} &  \\ h^{3,0}&  & h^{2,1} &  & h^{1,2}  &  & h^{0,3} \\& h^{2,0} &  & h^{1,1} &  & h^{0,2} &  \\  &  & h^{1,0} &  & h^{0,1} &  &  \\  &  &  &  h^{0,0} &  &  &  \end{array}\right. = \left.\begin{array}{ccccccc}  &  &  &  1 &  &  &  \\  &  & 0 &  & 0 &  &  \\ & 0 &  & 2 &  & 0 &  \\ 1&  & 86 &  &86  &  & 1 \\& 0 &  & 2 &  & 0 &  \\  &  & 0 &  & 0 &  &  \\  &  &  &  1 &  &  &  \end{array}\right..
\]
Since $h^{0,1}(\hat X_0)=h^{0,2}(\hat X_0)=0$, from the long exact sequence associated to the exponential sequence
\begin{equation}\label{esles}
\begin{split}
0 = H^{0,1} \cong  H^1(\hat X_0,\mathcal O_{\hat X_0}) &\rightarrow H^1(\hat X_0,\mathcal O^*_{\hat X_0})\\
&\xrightarrow{c_1} H^2(\hat X_0,{\bZ}) \rightarrow H^2(\hat X_0,\mathcal O_{\hat X_0}) \cong H^{0,2} = 0,
\end{split}
\end{equation}
we have $\mathrm{Pic}(\hat X_0) :=H^1(\hat X_0,\mathcal O^*_{\hat X_0})\cong H^2(\hat X_0,{\bZ})$; that is, $c_1$ gives an  isomorphism of abelian groups.
According to Section 2 of \cite{candelas1994mirror1},  $\mathrm{Pic}(\hat X_0) $ is generated by $\mathcal L = \phi^*\mathcal{O}_{\tilde X_q}(1)$ and $ \mathcal H = \phi^*\mathcal{O}_{\tilde X_q}(2)$. Then
$H^{2}(\hat X_0,\bZ) = \bZ[L,H]$, where $L = c_1(\mathcal L) , H = c_1(\mathcal H)$. We can also write $L=i^*D_1$ and $H=i^*D_3$ as in \cite{cox1999mirror}, where $i$ is the inclusion of $\hat X_0$ into the ambient toric variety. Moreover, we have $L^2 = 0$, and the intersection numbers are 
\begin{equation}\label{intnum}
H^3 = 8,\quad H^2L = 4,\quad HL^2 = L^3 =0.
\end{equation}

Let $l,h$ be the Poincar{\'e} duals of $L,H$; that is we have
$$
L\cdot l =H\cdot h =1,\quad H\cdot l = L\cdot h =0.
$$
Then using the intersection numbers \eqref{intnum}, we have $ l = \frac{1}{4}H^2-\frac{1}{2}HL$, $h = \frac{1}{4}HL$. The second Chern class can be shown \cite{candelas1994mirror1} to be $c_2(\hat X_0) = 24l+56h$. The Todd class is then $td(\hat X_0) = 1+\frac{c_2}{12} = 1+2l+\frac{14}{3}h$. Finally, the K{\"a}hler cone of $X_0$ is $\bR_{\geq 0}L+\bR_{\geq 0}H$.

\subsubsection{$\Gamma$-integral structure and A-polarization}

Recall there is a ring isomomorphism $ch:K_0(\hat X_0)\otimes_\bZ \bQ \rightarrow H^{even}(\hat X_0,\bQ)$, where $K_0(\hat X_0)$ is the Grothendieck ring of vector bundles on $\hat X_0$ and $ch$ is the Chern character. 
In particular, $ch$ induces an integral structure on $H^{even}(\hat X_0,\bC)$. This integral structure is called the \textit{$ \hat{\Gamma}$-integral structure} in \cite{iritani2009integral}, because one can replace the Todd class in (\ref{releuler}) below, by a square of the Gamma class.

On $K_0(\hat X_0)\otimes_\bZ \bQ$, the \textit{relative Euler characteristic} is defined as $$\chi(E_1,E_2)= \sum_{i=0}^3(-1)^i \mathrm{dim}\mathrm{Ext}^i(E_1,E_2).$$ By the Hirzebruch–Riemann–Roch formula, we have
\begin{equation}\label{releuler}
\begin{split}
    \chi(E_1,E_2) & = \int_{\hat X_0}ch(E_1^*)\cdot ch(E_2)\cdot td(\hat X_0)\\
    & = \int_{\hat X_0}ch(E_1^*)\cdot ch(E_2)\cdot \left(1+\frac{c_2}{12}\right).
\end{split}
\end{equation}

The category of algebraic vector bundles naturally embeds into the category of coherent sheaves as a full subcategory.
By the Serre (coherent) duality (\cite{huybrechts2006fourier}) and the Calabi-Yau condition $\omega_{\hat X_0} \cong \cO_{\hat X_0}$, we have
\[
\begin{split}
\mathrm{Ext}^i(E_1,E_2) & \cong \mathrm{Hom}_{\mathcal D^b(\mathbf{Coh})}(E_1,E_2[i]) \cong \mathrm{Hom}_{\mathcal D^b(\mathbf{Coh})}(E_2,E_1\otimes \omega_{\hat X_0}[3-i])^*\\
& =\mathrm{Hom}_{\mathcal D^b(\mathbf{Coh})}(E_2,E_1[3-i])^* \cong \mathrm{Ext}^{3-i}(E_2,E_1)^*,
\end{split}
\]
where $\mathcal D^b(\mathbf{Coh})$ is the bounded derived category of coherent sheaves on $\hat X_0$. This shows that the Euler characteristic is anti-symmetric. Moreover, since $ch$ is a ring isomorphism, $1+\frac{c_2}{12}$ is invertible in $H^{even}(\hat X_0,\bC)$, and the intersection pairing on cohomology is non-degenerate, we have
$\chi$ is non-degenerate. Therefore, $\chi$ linearly extends to a symplectic form on  $K(\hat X_0)\otimes_{\bZ} \bC$.

By the ring isomorphism $ch$, $\chi$ induces a symplectic structure on $H^{even}(\hat X_0,\bC)$, which together with the $\Gamma$-integral structure gives a polarization of $H^{even}(\hat X_0,\bC)$, which we will call the \textit{A-polarization}.

\subsection{B-model geometry}

For B-model, we will study the polarized variation of Hodge structure (VHS) for the family $(\pi,X,B)$, where $B$ is the moduli space of complex structures\footnote{in general, $B$ is the so-called \textit{simplified moduli space}, yet, in this case, it is equals to the moduli space of complex structure, see Chapter 6 of \cite{cox1999mirror}.} for the smooth Calabi-Yau hypersurface in the toric variety polar dual to $\bP[1,1,2,2,2]$. We denote the total space of the universal family as $X$, and the projection as $\pi$.

A standard toric construction gives a natural compactification  $B\xhookrightarrow{\imath}\bar B$ and extends the family, i.e., we have the following Cartesian diagram

\begin{center}
     \begin{tikzcd}
         X\arrow[hookrightarrow]{r}\dar[swap,"\pi"] \drar[phantom, "\square"] & \bar X\dar["\bar \pi"] \\%
         B\arrow[hookrightarrow]{r}[swap]{\imath} & \bar B
    \end{tikzcd}.
\end{center}
However, $\partial B:=\bar B\setminus B$ is not a normal crossing divisor in $\bar B$. A sequence of blow-ups yield
\begin{center}
     \begin{tikzcd}
         X\arrow[hookrightarrow]{r}\dar[swap,"\pi"] \drar[phantom, "\square"] & \bar X\dar["\bar \pi"]\arrow[hookrightarrow]{r} \drar[phantom, "\square"]& \tilde X \dar["\tilde{\pi}"] \\%
         B\arrow[hookrightarrow]{r}[swap]{\imath} & \bar B  \arrow[hookrightarrow]{r}[swap]{\jmath} & \tilde B,
    \end{tikzcd},
\end{center}
where $\jmath$ is the strict transform left inverse to the blow-up, and $\tilde B\setminus \jmath\circ\imath(B)$ is a simple normal crossing divisor. We now delve into the details.

\subsubsection{Geometry of a generic fiber}
For any point $p\in B$, $X_p$ is a smooth Calabi-Yau hypersurface of the toric variety $\bP_{\Delta^\circ}$ associated to the reflexive polytope $\Delta^\circ$, where $\Delta^\circ$ is the convex hull of $\Sigma(1)$ in (\ref{sigma1A}). On the other hand, in terms of polyhedra fan, the 1-dimensional cones are generated by
\begin{equation*}
\begin{split}
\Sigma^\circ(1) = \{(-1,-1,-1,-1),&(7,-1,-1,-1),(-1,3,-1,-1),\\
&(-1,-1,3,1),(-1,-1,-1,3)\}.
\end{split}
\end{equation*}

Denote the convex hull of these points as $\Delta$, then $\bP[1,1,2,2,2] = X_\Sigma \cong \bP_{\Delta}$, and $\Delta^\circ$ is the polar dual of $\Delta$. For more details, we refer to \cite{cox1999mirror}. The Hodge diamond for $X_p$ is thus the mirror reflection for that of $\hat X_0$:

\[
\left.\begin{array}{ccccccc}  &  &  &  h^{3,3} &  &  &  \\  &  & h^{3,2} &  & h^{2,3} &  &  \\ & h^{3,1} &  & h^{2,2} &  & h^{1,3} &  \\ h^{3,0}&  & h^{2,1} &  & h^{1,2}  &  & h^{0,3} \\& h^{2,0} &  & h^{1,1} &  & h^{0,2} &  \\  &  & h^{1,0} &  & h^{0,1} &  &  \\  &  &  &  h^{0,0} &  &  &  \end{array}\right. = \left.\begin{array}{ccccccc}  &  &  &  1 &  &  &  \\  &  & 0 &  & 0 &  &  \\ & 0 &  & 86 &  & 0 &  \\ 1&  & 2 &  &2  &  & 1 \\& 0 &  & 86 &  & 0 &  \\  &  & 0 &  & 0 &  &  \\  &  &  &  1 &  &  &  \end{array}\right.
\]

\subsubsection{Toric geometry of the moduli}

The moduli space $B$ has a natural compactification into a toric variety $\bar B$. The polyhedra fan for $\bar B$ is called the secondary fan and can be deduced combinatorically from $\Sigma'(1)$. To be more precise, we first find the generator for the linear relations between the $6$ vectors in $\Sigma'(1)$ and put them in a $2\times 6$ matrix. Then the fan generated by the columns of this matrix is precisely the secondary fan. More details can be found in \cite{aspinwall1994calabi,Gelfand1994}.
In our case, the secondary fan is generated by the labeled blue arrows in Figure \ref{toricdiagforB}.

\begin{figure}
\begin{center}
\begin{tikzpicture}[gridded]
     \draw[->,blue] (0, 0) -- (0, 1) node[above]   {$(0,1)$};
     \draw[->,blue] (0, 0) -- (-1, 0) node[below] {$(-1,0)$};
    \draw[->,blue] (0, 0) -- (1, -2) node[below] {$(1,-2)$};
    \draw[->,blue] (0, 0) -- (1, 0) node[above] {$(1,0)$};
    \draw[->,red] (0, 0) -- (0,-1);
    \draw[->,red] (0, 0) -- (1, -1);
  \end{tikzpicture}
 \caption{\textbf{The secondary fan}. Blue vectors generate the polyhedra fan for $\bar B$, while adding the red vectors desingularizes $\bar B$ by toric blow ups.}
 \label{toricdiagforB}
\end{center}
\end{figure}

More concretely, denote the coordinates associated to the one-dimensional cones $(1,0),(0,1),(-1,0),(1,-2)$ as $w_1,w_2,w_3,w_4$. Then the toric variety is 
$$
\bar B = (\bC^4\setminus (Z(w_1,w_3)\cup Z(w_2,w_4))/(\bC^*)^2,
$$
where the $(\bC^*)^2$-action is given by
\begin{equation}\label{gitmoduli}
    \begin{split}
        (\bC^*)^2\times \bC^4&\rightarrow \bC^4\\
        (t_1,t_2)\cdot (w_1,w_2,w_3,w_4)&\rightarrow (t_1w_1,t_2^2w_2,t_1t_2w_3,t_2w_4).
    \end{split}
\end{equation}

\subsubsection{Geometry of the total space}
As mentioned at the beginning of this subsection, we have a natural embedding $X\xhookrightarrow{}\bar X$.
In the affine chart $\mathrm{Spec}\,\bC[z_1,z_2]$, defined via
\[
\begin{split}
   \mathrm{Spec}\,\bC[z_1,z_2] &\rightarrow \bar B\\
   (z_1,z_2)&\rightarrow [z_1:z_2:1:1],
\end{split}
\]
the total space $\bar X$ can be written as a quotient $\{f=0\}/(\bZ_4)^3$ \cite{cox1999mirror}, where
\begin{equation}\label{defeq}
f := 2^{-2}z_2x_1^8+x_2^8+2^{-8}z_1x_3^4+x_4^4+x_5^4+x_1x_2x_3x_4x_5+x_1^4x_2^4.
\end{equation}
We use $[w_1:w_2:w_3:w_4]$ to denote the multi-graded homogeneous coordinates.

\begin{remark}
We have scaled the parameters $z_1,z_2$ used in \cite{cox1999mirror} by constants $2^{8}$ and $2^{2}$, which can also be viewed as a rescaling of coordinates $x_i$, $i=1,\cdots,5$ via the torus action. This rescaling will simplify the expressions later on (e.g., defining equations in (\ref{disclocus})) and increase the numerical accuracy when we compute the monodromy operators. Moreover, the defining equation used in \cite{candelas1994mirror1} is
$$
f' =  x_1^8+x_2^8+x_3^4+x_4^4+x_5^4-8\psi x_1x_2x_3x_4x_5-2\phi x_1^4x_2^4,
$$
which differs from (\ref{defeq}) by a change of variables
$$
z_1 = -\frac{\phi}{8\psi^4}=-\frac{\zeta}{8\eta}= -\frac{\eta}{8\xi},\quad z_2 = \frac{1}{\phi^2}=\frac{\tau}{\zeta}.
$$
In particular the moduli space of (\ref{defeq}) becomes a blowup of those in \cite{candelas1994mirror1}.
\end{remark}

\subsubsection{Discriminant loci}\label{secdiscloci} Our moduli space $B$ will be the smooth locus in $\bar B$, i.e., the locus in which the Calabi-Yau 3-fold is smooth. The complement is the discriminant locus. One way to find the discriminant locus is by analyzing the Jacobian of (\ref{defeq}) \cite{candelas1994mirror1}.

Another approach is using the theory of GKZ systems \cite{Gelfand1994} as in \cite{cox1999mirror}. The discriminant locus of the family is the same as the singular locus for the GKZ system associated to $(\Sigma'(1)\cup \{0\}) \times 1$. By the standard theory of GKZ systems, the singular locus is contained inside the union of all the toric boundaries and the \textit{principal A-determinant}.

Either way, the discriminant loci is contained in the union following divisors\footnote{The defining equation of $C_{con}$ given in Example 6.1.4.1 of \cite{cox1999mirror} has a typo: the last term $(1-256z_1)^2$ should be $(1-256z_2)^2$. Also, not necessarily all of these divisors are part of the discriminant locus.}
\begin{equation}\label{disclocus}
    \begin{array}{ll}
        1.\, C_{\infty} := D_{(0,1)} = Z(z_2);& 2.\, D_{(1,0)} = Z(z_1) ;\\
        3.\, C_0:=D_{(-1,0)}; & 4.\, D_{(1,-2)};\\
        5.\, C_1 = Z(z_2-1); & 6.\, C_{con} = Z(z_1^2z_2-(1-z_1)^2);
    \end{array}
\end{equation}

The names for the divisors are taken from \cite{candelas1994mirror1}, with discrepancies due to the fact that our toric diagram is a rotation of theirs. The discriminant locus in the real subspace of the affine chart  $(z_1,z_2)$ is shown in Figure \ref{discrilociaff}. These affine coordinates play a distinguished role in the mirror symmetry for Calabi-Yau complete intersections in toric variety \cite{hosono1996gkz,givental1996equivariant,givental1998mirror}. Note the divisors $D_{(-1,0)} = Z(w_3)$, $D_{(1,-2)}=Z(w_4)$ cannot be seen in this affine chart.

\begin{figure}
\begin{center}
\begin{tikzpicture}
  \draw[->] (-3, 0) -- (4.5, 0) node[right] {$z_1 (D_{(0,1)}=C_\infty)$};
  \draw[->] (0, -1) -- (0, 4.8) node[above] {$z_2 (D_{(1,0)})$};
  \draw[scale=1, domain=-3:-0.2, smooth, variable=\x, blue] plot ({\x}, {1/10*(1-1/\x)^2+1});
  \draw[scale=1, domain= 0.33:4.2, smooth, variable=\x, blue] plot ({\x}, {11/10*(1-1/\x)^2}) node[right] {$C_{con}$};
  \draw[scale=1, domain=-3:4.5, smooth, variable=\x, red] plot ({\x}, {11/10}) node[right] {$C_{1}$};
   \draw [fill] (0,11/10) circle [radius=0.05];
 \node [below left] at (0,11/10) {\footnotesize{$(0,1)$}};
 \node [above] at (-3, 12/10) {\footnotesize{$\color{blue}{C_{con}}$}};
\draw [fill] (1,0) circle [radius=0.05];
 \node [below] at (1,0) {\footnotesize{$(1,0)$}};
 \draw [fill] (1/2,11/10) circle [radius=0.05];
 \node [above right] at (1/2,11/10) {\footnotesize{$(\frac{1}{2},1)$}};
 \draw [fill] (0,0) circle [radius=0.05];
 \node [below left] at (0,0) {\footnotesize{$MUM$}};
\end{tikzpicture}
\end{center}

\caption{\textbf{Discriminant locus.} All curves, including the axes, in the figure represent the discriminant locus in the real subspace of the affine chart  $(z_1,z_2)$. The origin is the maximal unipotent point.}
\label{discrilociaff}
\end{figure}
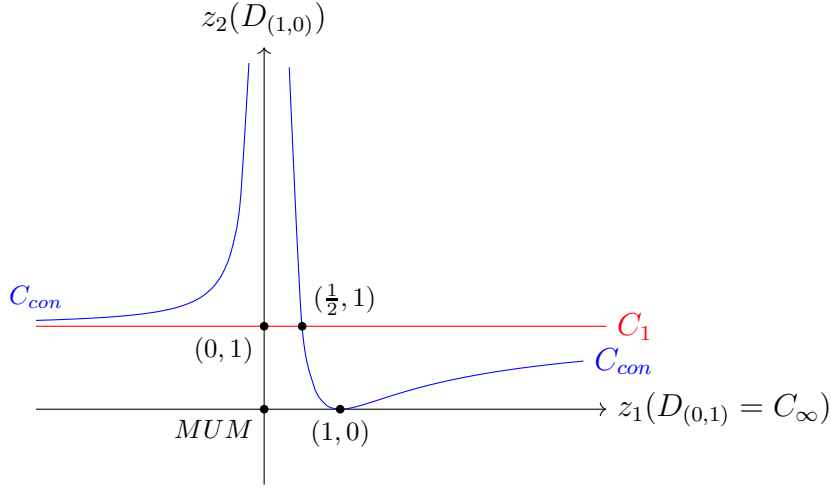

The toric variety $\bar B$ is singular at two points $D_{(-1,0)}\cap D_{(1,-2)}$ and $D_{(1,0)}\cap D_{(1,-2)}$. Adding the red vectors $(0,-1)$ and $(1,-1)$ in Figure \ref{toricdiagforB} corresponds to the resolution of the moduli space by toric blow-ups. Furthermore, if we want the discriminant locus to be a normal crossing divisor, we need to further blow up once for the triple intersection $D_{(1,0)}\cap C_1\cap C_{con}$, and twice for the 2-tangency at $C_{con}\cap D_{(0,1)}$. These blow-ups are not toric, and we denote the resulting divisors as $E_0$ and $E_1,E_2$. To sum up, in addition to those in (\ref{disclocus}), we have 5 more boundary divisors

\begin{equation}\label{disclocus2}
    \begin{array}{lll}
         7.\,D_{(1,-1)} ;& 8.\,D_{(0,-1)} ;&\\
         9.\, E_0;& 10.\, E_1;
         &11.\, E_2;
    \end{array}
\end{equation}
We denote the resulting smooth compact moduli space 
as $bl:\tilde B\rightarrow \bar B$. Then the family we mentioned at the beginning of this section is  $(\tilde\pi,\tilde X:=X\times_{\bar B}\tilde B, \tilde B)$. This completes the step (1) in Section \ref{olapp}.

\subsubsection{Variation of polarized Hodge structure}

Now we will discuss the Hodge theory for the smooth family $(\pi,X,B)$. Recall
\begin{definition}
A $\bZ$-\textit{variation of Hodge structure} (VHS) of weight $n$ on a local system $L\in \mathbf{Mod}(B,\bZ)$ over a complex manifold $B$ consists of a (finite, decreasing) Hodge filtration of holomorphic vector bundle
$$
0\subset \mathcal F^n\subset \mathcal F^{n-1} \subset \cdots \subset \mathcal F^1\subset \mathcal F^0:=L\otimes_\bZ\mathcal O_B,
$$
that induces a pure $\bZ$-Hodge structure of weight $n$ on each stalk of $L_\bC:=L\otimes_\bZ\bC$ and satisfies the \textit{Griffiths transversality (IPR)}
$$
\nabla\mathcal F^k \subset \mathcal F^{k-1}\otimes \Omega^1_B,
$$
where $(\mathcal F^0,\nabla)$ is the image of $L$ under Riemann-Hilbert correspondence. The holomorphic flat connection $\nabla$ is called the \textit{Gauss-Manin connection}.  Moreover, the VHS is \textit{$\mathcal Q$-polarized} if there is a $\nabla$-flat 2-form $\mathcal Q$ on $\mathcal F$, such that it induces a $Q$-polarization on each stalk.
\end{definition}
The \textit{Hodge numbers} is defined to be those on each stalk, i.e., it is a length $n+1$ vector $(h^{n,0},h^{n-1,1}\cdots,h^{0,n})$ with $h^{k,n-k}:=\mathrm{rank}\,\mathcal F^k- \mathrm{rank}\,\mathcal F^{k-1}$.
For details, we refer to \cite{robles2016degenerations,voisin2002hodge}.

In our example, we have a weight 3 VHS with Hodge numbers (1,2,2,1). The local system underlying the VHS is given by $L_\bZ:=R^3\pi_*\bZ_X$, and the polarization $\mathcal Q$ is given by the intersection pairing on each stalk. By the local Torelli theorem for Calabi-Yau manifolds (c.f., Chapter 9 of \cite{voisin2002hodge}), the Hodge filtration $\mathcal F^\bullet$ is determined by the Gauss-Manin connection.

To complete step (2) in Section \ref{olapp}, we need to find a multi-valued basis for $\mathcal F^0$ and represent the Gauss-Manin connection in this basis. This will occupy us for the rest of this subsection.

\subsubsection{The GKZ system and Picard-Fuchs equations}

To a toric variety, there is associated a holonomic $\mathcal D$-module \cite{Gelfand1994,hosono1996gkz}, called the GKZ system. In our case, we get a $\mathcal D_{\bar B}$-module $\mathcal M_{\bar B}$. Restricting to the smooth locus, the periods associated to a canonical section $\omega_1$ of $\pi_*\Omega^3_{X/B}$ are solutions of $\mathcal M_{\hat B}$. However, they are not the only solutions, i.e., the GKZ system is properly contained inside the Picard-Fuchs system \cite{cox1999mirror}. A larger system $\mathcal I$ that contains all the Picard-Fuchs equations can be calculated by the Griffiths-Dwork method or a factorization of the GKZ system \cite{hosono1995mirror}. For the definition and properties of $\mathcal D$-modules, we refer to \cite{hotta2007d}.

More precisely, the canonical section is $\omega_1:=\mathrm{Res}\frac{\Omega_0}{f}$, with

$$
\Omega_0 = x_1 \widehat{dx_1}+x_2\widehat{dx_2}+ 2x_3\widehat{dx_3}+2x_4\widehat{dx_4}+2x_5\widehat{dx_5},\quad \widehat{dx_k} := \bigwedge_{i\neq k} dx_k
$$
being a holomorphic 4-form on $\bP_{\Delta^\circ}$. As shown in Example 5.5.2.1 of \cite{cox1999mirror}, the Picard-Fuchs $\mathcal D_B$-module $\mathcal D_{B}/\mathcal I$ in the affine chart $\mathrm{Spec}\,\bC[z_1,z_2]$ of $B$ is determined by the left ideal $\mathcal I$ in $ A_2$ generated by
\begin{equation}\label{PFeqns}
\begin{split}
D_{PF_1}:= & \delta_{2}^2 - 2^{-2}z_2\left(\delta_{1}-2\delta_{2}\right)\left(\delta_{1}-2\delta_{2}-1\right),\\
D_{PF_2}:= & \delta_{1}^2(\delta_{1}-2\delta_2)-2^{-6}z_1(4\delta_{1}+1)(4\delta_{1}+2)(4\delta_{1}+3).
\end{split}
\end{equation}
Here $A_2$ is the 2-variable Weyl algebra and $\delta_{i} = z_i\partial_{z_i}$ are the log-differentials. Recall, we have an isomorphism between $\mathcal D_B$-modules $(\mathcal F^0,\nabla)\cong \mathcal D_B/\mathcal I$. Denote the multi-valued holomorphic solution space of $\mathcal I$ as $Sol(\mathcal D_B/\mathcal I)$, i.e.,
\[
Sol(\mathcal D_B/\mathcal I)  : = \mathrm{Hom}_{\mathcal D_{\tilde B}}(j_!(\mathcal D_{ B}/\mathcal I),\mathcal O_{\tilde B}(\log(\tilde B\setminus B))),
\]
where $B\xhookrightarrow{j} \tilde B$ is the inclusion. In other words, $Sol(\mathcal D_B/\mathcal I)$ consists of local holomorphic solutions that have logarithmic poles on the normal crossing $\tilde B\setminus B$. Most importantly, we have
\begin{equation}\label{threeisos}
Sol(\mathcal D_B/\mathcal I) \cong H_3(X_p,\bC)  \cong H^3(X_p,\bC),
\end{equation}
where the first isomorphism is induced by the period map and the second by Poincar{\'e} duality.
These isomorphisms will play an important role in our calculations.

\subsubsection{Gauss-Manin connection} From the Picard-Fuchs equations,\footnote{If using Griffiths-Dwork method one actually first gets the Gauss-Manin connection before the Picard-Fuchs equation.} the Gauss-Manin connection can be deduced once we fix a basis for the multi-valued sections in $\mathcal F^0$. 
We fix the basis $\omega^T = [\omega_1,\cdots,\omega_6]$ as
\begin{equation}\label{basisomega}
\left[\begin{array}{c}
      \omega_1 \\
      \omega_2\\
      \omega_3\\
      \omega_4\\
      \omega_5\\
      \omega_6
\end{array}
\right] := \mathrm{Res}_{f=0}
\left[\begin{array}{c}
\frac{\Omega_0}{f}\\
-2i\pi\delta_1\omega_1\\
-2i\pi\delta_2\omega_1\\
-\pi^2\delta_1\delta_2\omega_1\\
-\pi^2\delta_1^2\omega_1\\
-2i\pi^3\delta_1^2\delta_2\omega_1
\end{array}
\right] = \mathrm{Res}_{f=0}
\left[\begin{array}{c}
\frac{\Omega_0}{f}\\
\frac{i\pi2^{-7}z_1x_3^4\Omega_0}{f^2}\\
\frac{i\pi2^{-1}z_2x_1^8\Omega_0}{f^2}\\
\frac{-\pi^22^{-9}x_1^8x_3^4\Omega_0}{f^3}\\
-\pi^2\omega_2-\frac{\pi^22^{-3}z_1^2x_3^8\Omega_0}{f^3}\\
2i\pi\omega_4+\frac{3i\pi^3 2^{-16}x_1^8x_3^8\Omega_0}{f^4}
\end{array}
\right].
\end{equation}
The covariant derivatives with respect to the Gauss-Manin connection can be represented by the connection matrices $\Gamma^{(k)}$, defined via $\sum_{j=1}^6\Gamma_{ij}^{(k)}\omega_j: = \nabla_{z_k} \omega_i$, $i,j=1,\cdots,6,k=1,2$. We also denote the transpose of $\Gamma^{(k)}$ as $M^{(k)}$, which is more convenient for numerical calculation (and easier for displaying). From (\ref{PFeqns}), we have

\begin{equation}\label{GMcon1}
z_1M^{(1)} = \left[
\begin{array}{rrrrrr}
    0 & 0 & 0 & 0 & \frac{3\pi^2z_1}{32(z_1-1)} & -\frac{15i\pi^3z_1^2z_2}{64(z_1 - 1)\alpha}  \\
    \frac{i}{2\pi}& 0 & 0 & 0 & \frac{11i\pi z_1}{32(z_1-1)} & \frac{\pi^2z_1z_2(52z_1 + 3)}{64(z_1-1)\alpha} \\
    0 & 0 & 0 & 0 & 0 & -\frac{3\pi^2z_1(z_1 - 1)(z_2 - 1)}{32\alpha} \\
    0 & 0 &  \frac{2i}{\pi} & 0 & 0 & -\frac{11i\pi z_1(z_1 - 1)(z_2 - 1)}{8\alpha}\\
     0 & \frac{2i}{\pi} & 0 & 0 & -\frac{3z_1}{2(z_1-1)} & \frac{i\pi z_1z_2(49z_1+11)}{16(z_1-1)\alpha}\\
      0 & 0 & 0  & -\frac{i}{2\pi} & \frac{i}{\pi(z_1-1)} & \frac{z_1(8z_1z_2 - 6z_1 - 3z_1^2z_2 + 3z_1^2 + 3)}{2(z_1-1)\alpha} \\
\end{array}
\right]
\end{equation}
and
\begin{equation}\label{GMcon2}
z_2M^{(2)} = \left[
\begin{array}{rrrrrr}
0 & 0 & 0 & -\frac{3\pi^2z_1z_2}{128(z_1-1)(z_2-1)} & 0 & \frac{15i\pi^3 z_1^2}{128\alpha}\\
0 & 0 & \frac{z_2}{4(z_2-1)} & -\frac{11i\pi z_1z_2}{128(z_1-1)(z_2-1)} & 0 & -\frac{\pi^2z_1z_2(52z_1 + 3)}{128\alpha}\\
\frac{i}{2\pi} & 0 & -\frac{z_2}{2(z_2-1)} & 0 & 0 & -\frac{3\pi^2z_1z_2(2z_1 - 1)}{64\alpha}\\
0 & \frac{2i}{\pi} & \frac{2iz_2}{\pi(z_2-1)} & -\frac{z_2  }{2(z_2-1)} & 0 & -\frac{11i\pi z_1z_2(2z_1 - 1)}{16\alpha}\\
0 & 0 & -\frac{iz_2}{2\pi(z_2-1)} & \frac{z_2(5z_1-2)}{8(z_1-1)(z_2-1)} & 0 & -\frac{iz_1z_2\pi(49z_1 + 11)}{32\alpha}\\
0 & 0 & 0 & -\frac{iz_2(2z_1-1)}{4\pi(z_1-1)(z_2-1)} & -\frac{i}{2\pi} & -\frac{2z_1^2z_2}{\alpha},
\end{array}
\right]
\end{equation}
where $\alpha = z_1^2z_2 -(1-z_1)^2$ is the defining equation of $C_{con}$ in (\ref{disclocus}).

\subsubsection{Yukawa couplings and intersection matrix}
In this section, we calculate the intersection matrix for the basis $\omega$ in \eqref{basisomega}. This is closely related to the so-called Yukawa coupling and more generally, the \textit{$n$-point functions}. An $n$-point function is defined as the intersection pairing between $\omega_1$ and its $n$-th derivative, and the Yukawa couplings are the 3-point functions. More precisely, we define
$$
K^{i,j} := \int_{X_b}\omega_1\wedge \delta_{1}^{i}\delta_{2}^{j}\omega_1,\quad i,j\in\bZ_{\geq0}.
$$
Apparently, for $i+j\leq 2$, we have $K^{i,j}=0$ by Griffiths transversality. More generally, all the $n$-functions  can be related to each other via Griffiths transversality and Picard-Fuchs equations. In fact, they can be solved up to a multiplicative constant \cite{hosono1995mirror}. For our example, the 3,4-point functions are worked out in \cite{candelas1994mirror1,hosono1995mirror}, and Example 5.6.2.1 of \cite{cox1999mirror}. We have
\begin{equation}\label{yukawacoup}
\begin{split}
K^{3,0} & = \frac{2c}{D(z_1,z_2)},\\
K^{2,1} & = \frac{1-z_1}{2}K^{3,0} = \frac{c(1-z_1)}{D(z_1,z_2)},\\
K^{1,2} & = \frac{z_2(2z_1-1)}{1-z_2}K^{3,0} = \frac{cz_2(2z_1-1)}{(1-z_2)D(z_1,z_2)},\\
K^{0,3} & = \frac{z_2(1-z_1+z_2-3z_1z_2)}{2(1-z_2)^2}K^{3,0} = \frac{cz_2(1-z_1+z_2-3z_1z_2)}{(1-z_2)^2D(z_1,z_2)},
\end{split}
\end{equation}
where $D(z_1,z_2) = (1-z_1)^2-z_1^2z_2$ and $c$ is the multiplicative constant that cannot be determined by Picard-Fuchs equations and Griffiths transversality.

Now we want to express the entries of the intersection matrix $I : = (\int_{X_b}\omega_i\wedge \omega_j)_{6\times 6}$ in terms of the Yukawa couplings and their derivatives to the extent. By Griffiths transversality, we have $\int_{X_b}\omega_i\wedge\omega_j=0$ for $(i,j) = (1,k),(k,1),k=1,\cdots,5$ and $(i,j) = (2,3),(3,2)$. Then by the equality $\delta_{2}\int_{X_b}\omega_1\wedge \omega_5=0$, we have
$$
I_{3,5} = \int_{X_b}\omega_3\wedge \omega_5
 = \int_{X_b}\omega_1\wedge \omega_6 = I_{1,6} = -2i\pi^3 K^{2,1},
$$
Similarly, by $\delta_{1}I_{1,4} = \delta_1I_{2,3}= 0$, we have $I_{2,4} = I_{3,5} = I_{1,6} = -2i\pi^3 K^{2,1}$. To continue, we consider the equality $\delta_1I_{1,5}=0$, this gives
$$
\int_{X_b}\omega_2\wedge\omega_5+2i\pi^3\int_{X_b}\omega_1\wedge\delta_1^3\omega_1 =0,
$$
which means, we have $I_{2,5} = -2i\pi^3 K^{3,0}$. Similarly $\delta_2I_{1,4}=0$ yields $I_{3,4} = -2i\pi^3 K^{1,2}$.
Expanding $\delta_2I_{2,5},\delta_1I_{2,4}$, we have
$$
4\pi^4\delta_2K^{3,0} = -4I_{4,5}+I_{2,6},\quad 4\pi^4\delta_1K^{2,1} = 4I_{4,5}+I_{2,6};
$$
likewise $\delta_1 I_{3,4}$ gives $4\pi^4\delta_1K^{1,2} = I_{3,6}$. 

The intersection matrix now looks like
\begin{equation}\label{intermat}
I = -2i\pi^3\cdot
\left[
\begin{array}{rrrrrr}
     0 & 0 & 0 & 0 & 0 & K^{2,1}\\
      & 0 & 0 & K^{2,1} & K^{3,0} & i\pi(\delta_1K^{2,1}+\delta_2K^{3,0})\\
      &  & 0 & K^{1,2} & K^{2,1} & 2i\pi \delta_1K^{1,2} \\
      &  &  & 0 &  \frac{i\pi}{4}(\delta_1K^{2,1}-\delta_2K^{3,0}) & \frac{i}{2\pi^3}I_{4,6}\\
      &  &  &  & 0   &\frac{i}{2\pi^3}I_{5,6}\\
      &  &  &  &  &  0
\end{array}
\right].
\end{equation}
Since $I$ is anti-symmetric by definition, the lower triangular entries are determined by the upper triangular entries. Then by \eqref{yukawacoup}, $I$ is determined up to a constant $c$, except for $I_{4,6}$ and $I_{5,6}$.

To determine $I_{4,6}$ and $I_{5,6}$, we start by expanding $\delta_1I_{4,5}$ and $\delta_2 I_{4,5}$, which gives us
\begin{equation}\label{eqI456}
    \begin{split}
    2i\pi\delta_1 I_{4,5} & = -I_{5,6}+2i\pi^5\int_{
X_b}\delta_1\delta_2\omega_1\wedge\delta_1^3\omega_1,\\
    2i\pi\delta_2 I_{4,5} & = I_{4,6}+2i\pi^5\int_{
X_b}\delta_1\delta_2^2\omega_1\wedge\delta_1^2\omega_1.
    \end{split}
\end{equation}
Next, expand the Picard-Fuchs operators \eqref{PFeqns}, to get
\begin{align*}
D_{PF_1} & = (1-z_2)\delta_2^2-z_2\left(\frac{1}{4}\delta_1^2 - \delta_1\delta_2 -\frac 14 \delta_1+\frac 12\delta_2 \right),\\ D_{PF_2} & = (1-z_1)\delta_1^3 - 2\delta_1^2\delta_2-z_1\left( \frac{3}{2}\delta_1^2+\frac{11}{16}\delta_1+\frac{3}{32}\right).
\end{align*}
Then  the equation $ 0 = \int_{X_b}\delta_1\delta_2\omega_1\wedge D_{PF_2}\omega_1$ yields
\begin{align*}
0  = & (1-z_1)\int_{X_b}\delta_1\delta_2\omega_1\wedge\delta_1^3\omega_1-2\int_{X_b}\delta_1\delta_2\omega_1\wedge\delta_1^2\delta_2\omega_1 - \frac{3z_1}{2}\int_{X_b}\delta_1\delta_2\omega_1\wedge\delta_1^2\omega_1\\
& -\frac{11z_1}{16}\int_{X_b}\delta_1\delta_2\omega_1\wedge\delta_1\omega_1 - \frac{3z_1}{32}\int_{X_b}\delta_1\delta_2\omega_1\wedge\omega_1,
\end{align*}
where the last term is 0 by Griffiths transversality. Multiply both sides by $2i\pi^5$ and simplify to get
\begin{equation}\label{I456pic1}
0 = (1-z_1)2i\pi^5\int_{X_b}\delta_1\delta_2\omega_1\wedge\delta_1^3\omega_1 -2I_{4,6}-3i\pi z_1I_{4,5}+\frac{11\pi^2z_1}{16}I_{2,4}.
\end{equation}
Next, consider the operator
\begin{align*}
4(1-z_1)\delta_1D_{PF_1} + z_2D_{PF_2} = & 4(1-z_1)(1-z_2)\delta_1\delta^2_2+2z_2(1-2z_1)\delta_1^2\delta_2\\
  + z_2\left(1-\frac52z_1\right)\delta_1^2- & 2z_2(1-z_1)\delta_1\delta_2-\frac{11}{16}z_1z_2\delta_1-\frac{3}{32}z_1z_2.\\
\end{align*}
The equation $i\pi^5\int_{X_b}\delta_1^2\omega_1\wedge (4(1-z_1)\delta_1D_{PF_1} + z_2D_{PF_2})\omega_1=0 $ simplifies to
\begin{align*}
    0 = & 4i\pi^5(1-z_1)(1-z_2) \int_{X_b}\delta_1^2\omega_1\wedge \delta_1\delta^2_2\omega_1 + z_2(1-2z_1) I_{5,6}\\
    & +2i\pi z_2(1-z_1)I_{4,5} + \frac{11}{32}\pi^2z_1z_2 I_{2,5}.
\end{align*}
Then combining the above equation with \eqref{eqI456} and \eqref{I456pic1}, we get
\begin{align*}
  I_{4,6} & = \frac{3i\pi^5\cdot c\cdot z_1^2z_2(z_1-3)}{8D(z_1,z_2)^2},  \\
  I_{5,6} & = \frac{i\pi^5\cdot c\cdot z_1(3-10z_1+18z_1z_2+7z_1^2-7z_1^2z_2)}{8D(z_1,z_2)^2}.
\end{align*}
The resulting intersection matrix is holomorphic near the origin (MUM point). In particular, we have
\begin{equation}\label{Interori}
I(0,0) = 2i\pi^3c\cdot\left[\begin{array}{cccccc}0 & 0 & 0 & 0 & 0 & -1 \\0 & 0 & 0 & -1 & -2 & 0 \\0 & 0 & 0 & 0 & -1 & 0 \\0 & 1 & 0 & 0 & 0 & 0 \\0 & 2 & 1 & 0 & 0 & 0 \\1 & 0 & 0 & 0 & 0 & 0\end{array}\right].
\end{equation}

\subsection{Mirror symmetry}

Mirror symmetry is a phenomenon observed in string theory that asserts two different supersymmetric conformal field theories are in fact isomorphic to each other. When restricted to the topological field theory, it can be interpreted as the local identification of two theories near some boundary points. These two theories are called the A-model and B-model, respectively. The A-model is the Gromov-Witten theory for a smooth Calabi-Yau 3-fold $\hat X_0$, considered as varying over the complexified K{\"a}hler moduli 
and the boundary point in question is the \textit{large volume limit} point. The B-model is the Hodge theory of another smooth Calabi-Yau 3-fold $X_p$ considered as varying over the complex moduli, 
and the boundary point is the \textit{maximal unipotent monodromy} point. In particular, $\hat X_0$ and $X_p$ have mirror Hodge diamonds and are called a mirror pair.

This phenomenon was first discovered and made precise for the quintic 3-fold and its mirror \cite{candelas1991pair}. There is an ostensibly different formulation of mirror symmetry, i.e., Kontsevich's homological mirror symmetry \cite{kontsevich1995homological}, which predicts the derived equivalence between the Fukaya category of $X_p$ and category of coherent sheaves on $\hat X_0$.

Since in A-model, the complex structure is fixed, all the fibers are isomorphic as complex manifolds. This is why we use the notation $\hat X_0$ instead of something base point dependent like $\hat X_b$.

\subsubsection{Mirror map}
To be more precise, as formulated by Morrison \cite{morrison1997mathematical} mirror symmetry asserts that locally near the maximal unipotent monodromy point and large volume limit point there exists an isomorphism of $\bZ$-local systems, the mirror map,  that it induces an isomorphism of polarized  $\bZ$-VHSs. The mirror map is given as
$$
mir: R^{3}\pi_*(\underline{\bZ}_X)\big|_{(\Delta^*)^n} \xrightarrow{\simeq} \left((\Delta^*)^n\times H^{even}(\hat X_0,\bC),\nabla_A\right),
$$
where the $n$-polydiscs on both sides are centered at the maximal unipotent point and large volume limit, respectively, and $n = \dim(B) = h^{1,1}(\hat X_0) = h^{2,1}(X_p)$. The flat A-model connection $\nabla_A$ is defined via Gromov-Witten invariants. The polarization on the left-hand side is the intersection pairing while the integral structure comes from the integral homologies. Now, what about the polarization and integral structure on the right-hand side?

As pointed out by Hosono \cite{ hosono2000local,hosono2004central}, from the point of view of homological mirror symmetry the natural integral structure on $H^{even}(\hat X_0,\bC) \cong K_0(\hat X_0)\otimes_\bZ\bC $ should come from $K_0(\hat X_0)$, and the polarization on each stalk should be the relative Euler characteristic $\chi$ in (\ref{releuler}). Restrict to a fixed point $b\in (\Delta^*)^n \subset B$, we have

\begin{conjecture}[Implication of homological mirror symmetry]\label{conjihms}
$$
mir_b: \left(H^{3}(X_b,\bZ),F_B^\bullet,Q\right) \xrightarrow{\simeq} (K_0(\hat X_{0}),F_A^\bullet,\chi),
$$
where $F_B^\bullet$ and $F_A^\bullet$ are the Hodge filtrations restricted to the stalks at $b$ and $mir(b)$.
\end{conjecture}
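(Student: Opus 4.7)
The plan is to verify the asserted isomorphism in three stages: the underlying $\bZ$-lattice, the Hodge filtration, and the polarization. Dimensionally the claim is already reasonable since $\dim_\bC H^3(X_b,\bC) = 2h^{2,1} + 2 = 6 = \mathrm{rank}_\bZ K_0(\hat X_0)$, and after tensoring with $\bC$, Morrison's formulation of the mirror map already supplies a flat isomorphism between the local systems $R^3\pi_*\bC$ and the trivial bundle with fiber $H^{even}(\hat X_0,\bC)$. The substance lies in matching the three extra structures canonically.

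First I would match the Hodge filtrations by invoking Givental's mirror theorem for toric complete intersections in the form proved by Lian--Liu--Yau \cite{lian1997mirror, givental1998mirror}. These results identify the $J$-function of $\hat X_0$, built from genus-zero Gromov--Witten invariants and encoding the A-model flat connection $\nabla_A$, with a distinguished multi-valued period of $\omega_1$ on the B-model near the MUM point. Combined with the characterization of $F_B^\bullet$ at the MUM point via orders of vanishing of periods (equivalently via the relative weight filtration of the limiting mixed Hodge structure), this transports the degree filtration on $H^{even}(\hat X_0,\bC)$ that defines $F_A^\bullet$ to the Hodge filtration $F_B^\bullet$.

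Second, to match the integral structures I would appeal to Iritani's $\hat\Gamma$-integral structure \cite{iritani2009integral}. For toric complete intersections, Iritani identifies the B-model lattice $H^3(X_b,\bZ)$, transported via the mirror map, with the image of $K_0(\hat X_0)$ in $H^{even}(\hat X_0,\bC)$ under a modified Chern character in which $td(\hat X_0)$ is replaced by a square of the Gamma class $\hat\Gamma(\hat X_0)$. The matching of polarizations then follows from the Hirzebruch--Riemann--Roch expression \eqref{releuler} for $\chi$, with the Gamma-class factors accounting for the $(2\pi i)^3$ rescalings that relate the topological pairing on $H^{even}$ to the Hodge-theoretic polarization $Q$ on $H^3(X_b,\bC)$.

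The main obstacle is precisely this matching of integral structures. The rational lattice is easy to match from the ring isomorphism $ch \colon K_0(\hat X_0) \otimes \bQ \to H^{even}(\hat X_0,\bQ)$ combined with the mirror theorem, but the integral refinement is sensitive to replacing the naive $\sqrt{td}$ by $\hat\Gamma$; getting this right is precisely the content of Iritani's theorem, and applying it directly to the mirror octic requires delicate verification in a non-toric hypersurface setting. This is why the conjecture is stated as a conjecture rather than a theorem: in this paper the author side-steps a full proof by establishing directly the consequences actually needed downstream, namely Propositions \ref{lemhosin} and \ref{prop2.2sec2}, which pin down the integral symplectic basis near the MUM point by explicit computation with the Picard--Fuchs system \eqref{PFeqns} rather than via the full machinery of homological mirror symmetry.
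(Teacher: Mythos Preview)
The statement is labeled a \emph{conjecture} in the paper and is not proved there; there is no ``paper's own proof'' to compare against. You recognize this yourself in your final paragraph, correctly noting that the author sidesteps a full proof and instead establishes only the downstream consequences (Propositions~\ref{lemhosin} and~\ref{prop2.2sec2}) needed for the monodromy computation.

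Your sketch via the Givental/Lian--Liu--Yau mirror theorem together with Iritani's $\hat\Gamma$-integral structure is a reasonable outline of how one would attack such a statement, and indeed the paper cites \cite{iritani2009integral,iritani2011quantum} as proving an analogous result for toric orbifolds. But as you say, the integral-lattice step is the crux, and Iritani's theorems as stated apply to the ambient toric variety rather than directly to the Calabi--Yau hypersurface; extending them to the compact Calabi--Yau fiber is exactly the gap that keeps this conjectural here. So your proposal is not a proof, nor does it claim to be one, and in that sense it matches the paper's treatment: both leave Conjecture~\ref{conjihms} open and work around it.
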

We note this version of mirror symmetry can be extended to a statement about the ambient toric variety. For toric orbifolds, the mirror symmetry is then between the Gromov-Witten theory on a toric orbifold (A-model), and the mirror Landau-Ginzburg theory (B-model). In this situation, a similar conjecture is formulated and proved in \cite{iritani2009integral,iritani2011quantum}.

\subsubsection{Maximal unipotent monodromy point}\label{sec132}

The precise mathematical definition of maximal unipotent monodromy (MUM) point was first formulated by Morrison \cite{morrison1993compactifications}. 

\begin{definition}[Definition 5.2.2 in \cite{cox1999mirror}]\label{defmum}
For an VHS $(L,F^\bullet,B)$ whose base $B$ has dimension $r$ and normal crossing boundaries $D = \bar{B}\setminus B$, and $D = \bigcup_{i\in I}D_i$.
The point $p = D_1\cap D_2\cap\cdots\cap D_r$ is a \textit{MUM boundary point} if the following conditions hold.
\begin{enumerate}
    \item The monodromy $T_i$ around $D_i$ is unipotent $\forall i$.
    \item Denote $N_i:=\log(T_i)$, then the monodromy weight filtration $W^\bullet$ induced by $N := \sum_{i}a_i N_i$, $a_i>0$ satisfies $\dim W_0 = \dim W_1 = 1, \dim W_2 = 1+r$.
    \item Let $g_0,\cdots,g_r$ be a basis for $W_2$, with $g_0$ spanning $W_0$, then the matrix $(m_{ij})$ defined via $N_i(g_j) = m_{ij}g_0$ is invertible.
\end{enumerate}
\end{definition}
For the definition of monodromy weight filtration (which is also called the Jacobson-Morosov filtration), we refer to \cite{cox1999mirror,robles2016degenerations}. In particular, we have

\begin{lemma}\label{lemHT}
Assume $(L,F^\bullet,B)$ is a weight 3 VHS  of type $(1,r,r,1)$, $r\in\bZ_+$, e.g., a geometric VHS for a family of smooth Calabi-Yau 3-fold with $h^{2,1}=r$. We further assume $B$ satisfies the same assumption in Definition \ref{defmum} and $MUM\in\bar B
\setminus B$ is a MUM point. Then the limiting mixed Hodge structure at the MUM is Hodge-Tate and indecomposable.
\end{lemma}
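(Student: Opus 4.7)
The plan is to analyze the Deligne bigrading $V_\bC = \bigoplus_{p,q} I^{p,q}$ of the limiting mixed Hodge structure (LMHS) and use the three conditions of Definition \ref{defmum} to pin down $i^{p,q} := \dim I^{p,q}$. First I compute the weight-graded dimensions. Condition (2) gives $\dim \mathrm{Gr}^W_0 = 1$, $\dim \mathrm{Gr}^W_1 = 0$, $\dim \mathrm{Gr}^W_2 = r$, and the $sl_2$-isomorphisms $N^k : \mathrm{Gr}^W_{3+k} \xrightarrow{\sim} \mathrm{Gr}^W_{3-k}$ forced by the monodromy weight filtration of a weight-$3$ LMHS then yield $(\dim \mathrm{Gr}^W_0, \dots, \dim \mathrm{Gr}^W_6) = (1,0,r,0,r,0,1)$.

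Next I establish Hodge-Tate, i.e., $i^{p,q} = 0$ for $p \neq q$. The Hodge numbers $(1,r,r,1)$ give $F^4 = 0$, forcing $I^{p,q} = 0$ whenever $p \geq 4$ or $q \geq 4$; combined with the vanishing of $\mathrm{Gr}^W_k$ in odd weights, the only candidates for non-zero $I^{p,q}$ sit on $\mathrm{Gr}^W_0, \mathrm{Gr}^W_2, \mathrm{Gr}^W_4, \mathrm{Gr}^W_6$. The extremal gradeds force $i^{0,0} = i^{3,3} = 1$ for free. On $\mathrm{Gr}^W_2$ the possibilities are $(i^{2,0}, i^{1,1}, i^{0,2}) = (a, b, a)$ with $2a + b = r$; the iso $N : \mathrm{Gr}^W_4 \xrightarrow{\sim} \mathrm{Gr}^W_2$, being a morphism of MHS of type $(-1,-1)$, transfers this decomposition to $\mathrm{Gr}^W_4$. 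The key input is condition (3): each $N_i$ is itself a morphism of MHS of type $(-1,-1)$, so $N_i(I^{2,0}) \subseteq I^{1,-1} = 0$ and symmetrically $N_i(I^{0,2}) = 0$. Therefore $\bigoplus_i N_i : \mathrm{Gr}^W_2 \to (\mathrm{Gr}^W_0)^r$ factors through the projection $\mathrm{Gr}^W_2 \twoheadrightarrow I^{1,1}$. In the bases $\{g_1,\dots,g_r\}$ of $\mathrm{Gr}^W_2$ and $\{g_0\}$ of $\mathrm{Gr}^W_0$, the matrix of this combined map is exactly $(m_{ij})$; condition (3) says it is invertible, hence an isomorphism of $r$-dimensional spaces. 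The factorization then forces $\dim I^{1,1} \geq r$, so $a = 0$ and $b = r$, and transferring through $N$ gives $\mathrm{Gr}^W_4 = I^{2,2}$.

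For indecomposability, suppose $V = V' \oplus V''$ is a direct sum in the category of polarized LMHS, i.e.\ respecting $W_\bullet$, $F^\bullet$, the polarization, and the commuting nilpotents $N_i$. Since $\dim \mathrm{Gr}^W_6 V = 1$, after relabeling we may assume $\mathrm{Gr}^W_6 V'' = 0$. Applying the $sl_2$-isomorphism $N^3: \mathrm{Gr}^W_6 \xrightarrow{\sim} \mathrm{Gr}^W_0$ to each summand forces $\mathrm{Gr}^W_0 V'' = 0$. Restricting $\bigoplus_i N_i$ to $\mathrm{Gr}^W_2 V''$ now lands in $(\mathrm{Gr}^W_0 V'')^r = 0$; but $\bigoplus_i N_i : \mathrm{Gr}^W_2 \to (\mathrm{Gr}^W_0)^r$ is the isomorphism from the previous paragraph, so injective on the subspace $\mathrm{Gr}^W_2 V''$, forcing $\mathrm{Gr}^W_2 V'' = 0$. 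Then the iso $N : \mathrm{Gr}^W_4 \xrightarrow{\sim} \mathrm{Gr}^W_2$ restricted to $V''$ gives $\mathrm{Gr}^W_4 V'' = 0$, whence $V'' = 0$.

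The main subtlety is the precise deployment of condition (3). In the Hodge-Tate step one must identify the matrix $(m_{ij})$ with the matrix of $\bigoplus_i N_i$ on $\mathrm{Gr}^W_2$, and exploit that $N_i$ kills the off-diagonal Deligne pieces of $\mathrm{Gr}^W_2$ purely for index reasons; in the indecomposability step one must use that any direct summand of the LMHS respects the $N_i$ individually, which is automatic from the standard definition of morphism of polarized LMHS in \cite{kerr2017polarized}. Everything else reduces to the combinatorics of the weight and Hodge numbers together with the $sl_2$-isomorphism $N^k : \mathrm{Gr}^W_{3+k} \xrightarrow{\sim} \mathrm{Gr}^W_{3-k}$.
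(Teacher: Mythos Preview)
Your proof is correct and supplies the details that the paper's very terse argument omits. The only notable variation is in the Hodge--Tate step: the paper simply invokes the classification of LMHS from \cite{kerr2017polarized}, observing that condition (2) together with the Hodge type $(1,r,r,1)$ (in particular the preserved constraint $\dim F^3_\infty = 1$, which forces $i^{3,1}=0$ and hence $i^{2,0}=0$ via the $N$-isomorphism) pins down type $IV_r$ directly. You instead route through condition (3), showing that $\bigoplus_i N_i : \mathrm{Gr}^W_2 \to (\mathrm{Gr}^W_0)^r$ factors through $I^{1,1}$ and is an isomorphism; this is a bit more work but has the pleasant side effect of establishing exactly the lemma you reuse to drive the indecomposability argument, for which the paper gives no details beyond ``follows from condition (3).''
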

\begin{proof}
For weight 3, the condition (2) ensures that the MUM exists only when $\dim B=r$. Furthermore, condition (2) also ensures the limiting mix Hodge structure has Hodge diamond
\begin{center}
\begin{tikzpicture}
 \draw [<->] (0,3.75) -- (0,0) -- (3.75,0);
 \draw [gray] (1,0) -- (1,3);
 \draw [gray] (2,0) -- (2,3);
 \draw [gray] (3,0) -- (3,3);
 \draw [gray] (0,1) -- (3,1);
 \draw [gray] (0,2) -- (3,2);
 \draw [gray] (0,3) -- (3,3);
 \draw [fill] (0,0) circle [radius=0.08];
 \node [above right] at (0,0) {\footnotesize{$1$}};
 \draw [fill] (1,1) circle [radius=0.08];
 \node [above right] at (1,1) {\scriptsize{$r$}};
 \draw [fill] (2,2) circle [radius=0.08];
 \node [above right] at (2,2) {\scriptsize{$r$}};
 \draw [fill] (3,3) circle [radius=0.08];
 \node [above right] at (3,3) {\scriptsize{$1$}};
\end{tikzpicture}
\end{center}
By the classification of mixed Hodge structure, it is Hodge-Tate, or of type $IV_r$ \cite{kerr2017polarized}. The indecomposability follows from the condition (3).
\end{proof}

\begin{corollary}\label{coMT}
For a weight 3 VHS of type $(1,2,2,1)$, if it has both a MUM point and a type $I$ degeneration boundary point, then the period map associated to it is Mumford-Tate generic.
\end{corollary}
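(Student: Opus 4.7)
The plan is to run the argument used for Theorem 5.4 of \cite{deng2021extension}, which proves an analogous Mumford--Tate genericity statement for a different family with the same Hodge numbers $(1,2,2,1)$. Let $\mathbf{M}$ denote the generic Mumford--Tate group of the VHS, a connected reductive $\bQ$-subgroup of $\Sp(6)$, and write $\mathfrak m := \mathrm{Lie}(\mathbf{M})$. The goal is to force $\mathfrak m = \fsp(6,\bQ)$; then $\mathbf{M} = \Sp(6)$ as $\bQ$-algebraic groups and the period map is Mumford--Tate generic by definition.

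The key Lie-theoretic input is the theorem of Deligne--Andr\'e: the identity component of the monodromy group is a normal subgroup of the derived group of $\mathbf{M}$, so every log-monodromy $N$ coming from a boundary divisor of the smooth completion $\tilde B$ lies in $\mathfrak m$, and so does the $\bQ$-span of its Jacobson--Morosov $\fsl_2$-triple. I would extract two such nilpotents. At the MUM point, Lemma~\ref{lemHT} shows the limit MHS is indecomposable Hodge--Tate, which forces a generic element $N_{\mathrm{MUM}}$ of the interior of the MUM nilpotent cone to have Jordan type $(4,2)$ on $V_\bQ \cong \bQ^6$. At the hypothesized type $I$ boundary, the classification of limit MHS of weight $3$ Calabi--Yau type in \cite{kerr2017polarized} pins down the Jordan type and weight filtration of a second nonzero nilpotent $N_{\mathrm{I}} \in \mathfrak m$, situated in a position transverse to the flag cut out by the MUM data (in the sense that its weight filtration is not subordinate to $W(N_{\mathrm{MUM}})$).

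The decisive step is the classification of reductive $\bQ$-subalgebras of $\fsp(6,\bQ)$ that could contain both $N_{\mathrm{MUM}}$ and $N_{\mathrm{I}}$ in the forced relative position. The short list (up to $\bR$-conjugacy) of proper reductive subalgebras of $\fsp(6)$ is small --- the relevant candidates are $\fsl_2^{\oplus 3}$, $\fsp(4)\oplus\fsl_2$, and the $\fsl_2$-image via the irreducible $6$-dimensional representation, together with their unitary/orthogonal $\bQ$-forms --- and each is excluded by a direct Jordan-block check: none can simultaneously support an element of Jordan type $(4,2)$ arising from an indecomposable Hodge--Tate MHS together with a transverse type $I$ nilpotent. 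Since the Hodge numbers, MUM Jordan profile, and type $I$ data are the same as in \cite[Thm.~5.4]{deng2021extension}, the exclusions there apply verbatim and yield $\mathfrak m = \fsp(6,\bQ)$.

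The main obstacle is this final classification and exclusion step. One must verify that the transversality data produced by a type $I$ boundary in the present family matches exactly the configuration ruled out in \cite{deng2021extension}, so that each candidate proper subalgebra is excluded by the same Jordan-block-plus-weight-filtration argument. Given the compatibility of Hodge numbers and limit MHS types, I expect the matching to be routine; should any candidate require separate treatment, the transversality can be checked directly using the explicit Gauss--Manin connection in \eqref{GMcon1}--\eqref{GMcon2} to compute $N_{\mathrm{MUM}}$ and the local form of $N_{\mathrm{I}}$ at the relevant boundary stratum.
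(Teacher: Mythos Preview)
Your strategy is the paper's strategy: classify the possible proper Mumford--Tate groups for Hodge type $(1,2,2,1)$ and rule each out using the MUM and type $I$ boundary data, citing Theorem~5.4 of \cite{deng2021extension} as the template. The differences are in execution rather than substance.

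The paper does not list reductive subalgebras of $\fsp(6)$ directly; instead it invokes the classification of Hodge representations of type $(1,2,2,1)$ from \cite{han2020hodge}, which yields exactly four non-generic candidates: a compact torus, $SL(2)\times SL(2)$ acting via $(1,1,1)\otimes(1,1)$, $U(2,1)$ acting on $W\oplus\overline{W}$, and $SL(2)\times Sp(4)$ acting on $(1,1)\oplus(1,1,1,1)$. (Your list --- $\fsl_2^{\oplus 3}$, $\fsp(4)\oplus\fsl_2$, principal $\fsl_2$ --- is not the same list, and in particular misses the unitary case while including $\fsl_2^{\oplus 3}$, which does not arise as a Hodge representation here.) The paper's exclusions are then sharper than a Jordan-block check: cases 1 and 3 are killed because a type $IV_2$ limit cannot occur for those Hodge representations; case 2 is killed because $(1,1,1)\otimes(1,1)$ admits no type $I$ degeneration; case 4 is killed by the \emph{indecomposability} of the MUM limit MHS (Lemma~\ref{lemHT}), not by transversality of weight filtrations. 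Your ``transversality'' framing is not wrong in spirit, but it is not the mechanism the paper uses, and the fallback to explicit Gauss--Manin data is unnecessary since the argument is purely representation-theoretic and applies to any VHS of this type.
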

\begin{proof}
The proof is very similar to the proof of Theorem 5.4 in \cite{deng2021extension}, for the definition of the Mumford-Tate group we refer to \cite{green2013hodge}. By the classification of Hodge representation of type $(1,2,2,1)$ \cite{han2020hodge}, the non-generic Mumford-Tate groups can appear in the following cases
\begin{enumerate}
    \item A compact maximal torus.
    \item $SL(2)\times SL(2)$ corresponding to Hodge types of $(1,1,1)\otimes (1,1)$.
    \item $U(2,1)$ corresponding to $W\oplus \overline W$, where $W$ has type $(1,2,0,0)$ or $(1,1,1,0)$.
    \item $SL(2)\times Sp(4)$ corresponding to Hodge types of $ (1,1)\oplus(1,1,1,1)$,
\end{enumerate}
By Lemma \ref{lemHT}, the MUM point corresponding to type $IV_2$ mixed Hodge structure, which can not appear in cases 1 and 3 above. For case 2, it cannot have type 1 degeneration. Case 4 can be ruled out by Lemma \ref{lemHT}, as the Hodge structure at MUM point is indecomposable. Thus the Mumford-Tate group has to be generic.
\end{proof}

In our case, as shown in \cite{candelas1994mirror1}, the MUM point is $D_{(1,0)}\cap C_\infty$, which is precisely the origin in the affine chart $\mathrm{Spec}\,\bC[z_1,z_2]$ as in Figure \ref{discrilociaff}. This can also be confirmed by our calculation in Section \ref{secnilcone}.

Finally, we note by the isomorphisms in (\ref{threeisos}), that there is a unique single valued holomorphic solution for the Picard-Fuchs system in a neighborhood of a MUM point.

\subsubsection{Givental's I-function and Hosono's $w$-function}

By the Frobenius method, the solutions of the GKZ and the Picard-Fuchs systems for the B-model can be naturally assembled into an A-model cohomology valued function. These are called the Givental's $I$-functions \cite{givental1996equivariant,givental1998mirror}, which is crucial in Givental's proof of mirror theorem for toric complete intersection. A slightly different form is also considered in \cite{lian1997mirror} and \cite{hosono2000local,hosono2004central}. One key difference is the following. In Givental's $I$-function the cohomological coefficients take value in the cohomology ring of the ambient toric variety, while Hosono's $w$-function takes value in the cohomology ring of the Calabi-Yau 3-fold.

In our example, still working in the affine chart $\mathrm{Spec}\,\bC[z_1,z_2]$. As shown in Example 11.2.5.1 \cite{cox1999mirror}, the unnormalized Givental's I-function is
\begin{equation*}
\begin{split}
\tilde I  =  & e^{t_0/\hbar} \sum_{d_1,d_2\geq0}\left((z_1/2^8)^{d_1+D_3}(z_2/2^2)^{d_2+D_1}\times\right.\\
& \left.\frac{\prod_{m=1}^{d_1}(4D_3+m\hbar)}{\prod_{m=1}^{d_2}(D_1+m\hbar)^2\prod_{m=1}^{d_1}(D_3+m\hbar)^3\prod_{m=1}^{d_1-2d_2}(D_3-2D_1+m\hbar)}\right).
\end{split}
\end{equation*}
This function satisfies the GKZ system. We can normalize it by setting $I_{\mathcal V}:=\mathrm{Euler(\mathcal V)} \cdot \tilde I$, where $\mathrm{Euler(\mathcal V)} = c_1(-\mathcal K_{X_{\Sigma'}}) = 4D_3$ is the anti-canonical class for  the ambient toric variety $X_{\Sigma'}$. Recall $X_{\Sigma'}$ is the resolved $\bP[1,1,2,2,2]$ as defined in Section \ref{sec111}. Then by \cite{givental1998mirror}, the coefficient of $I_{\mathcal V}$ generates the solution space of Picard-Fuchs system $Sol(\mathcal D_B/\mathcal I)$.

Now, we first set $t_0=0,\hbar = 1$ in $\tilde I$, then make the change $D_1\rightarrow \frac{i^*D_1}{2\pi i}=\frac{L}{2\pi i}$, $D_3\rightarrow\frac{i^*D_3}{2\pi i}= \frac{H}{2\pi i}$, and then multiply a suitable ratio of $\Gamma$-functions. This yields Hosono's $w$-function
\begin{equation}\label{wformourc}
\begin{split}
  w(z_1,z_2, \frac{H}{2\pi i}, & \frac{L}{2\pi i})  =   \sum_{d_1,d_2\geq 0}\left((z_1/2^8)^{d_1+\frac{H}{2\pi i}}(z_2/2^2)^{d_2+\frac{L}{2\pi i}}\times\right.\\
  & \frac{\Gamma(4d_1+\frac{4H}{2\pi i}+1)}{\Gamma(d_2+\frac{L}{2\pi i}+1)^2\Gamma(d_1+\frac{H}{2\pi i}+1)^3\Gamma(d_1-2d_2+\frac{H-2L}{2\pi i}+1)}.
  \end{split}
\end{equation}
After expanding this function, since the ring for the cohomology classes in $\tilde I$ and $w$ are different, the number of terms that survived is different. For example, there is a non-zero term in $\tilde I$ with value in $H^8(X_{\Sigma'},\bZ)$, which vanishes after pulling back to the Calabi-Yau hypersurface. One can easily see that the coefficients of $w$ still form a basis of $Sol(\mathcal D_B/\mathcal I)$.

As we mentioned at the end of Section \ref{sec132}, near the MUM point ($z_1=z_2=0$) there is a unique single-valued holomorphic solution. In terms of the $w$-function it is 
$$w_0(z_1,z_2) := w(z_1,z_2,0,0) =\sum_{d_1,d_2\geq0}\frac{\Gamma(4d_1+1)\cdot (z_1/2^8)^{d_1}(z_2/2^2)^{d_2}}{\Gamma(d_2+1)^2\Gamma(d_1+1)^3\Gamma(d_1-2d_2+1)}. $$

\subsubsection{Hosono's central charge formula}
One of the (conjectural) nice properties for the $w$-function is the following.
\begin{conjecture}[\cite{hosono2000local,hosono2004central}]\label{conjcc}
Consider a mirror pair of Calabi-Yau 3-folds $\hat X_0, X_p$ that are complete intersections inside toric varieties.
Fix a basis $\{E_i\}$ of $K_0(\hat X_0)\otimes_\bZ\bC$, with $\chi_{ij} = \chi(E_i,E_j)$. 
    Then the $w$-function for the B-model on $X_b$ can be written as
    $$
    w = \sum_{i,j}\int_{mir(E_i)}\Omega\cdot\chi^{ij}\cdot ch(\check E_j),
    $$
    where $\Omega$ is the canonical 3-form on $X_b$ and $\int_{mir(E_i)}\Omega$ is the period associated to the mirror of $E_i$.
\end{conjecture}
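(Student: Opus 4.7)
The plan is to show that both sides of the claimed equality are cohomology-valued solutions of the Picard-Fuchs system with matching boundary behavior at the MUM point, and then invoke uniqueness. First, observe that Hosono's $w$-function, when expanded as a formal polynomial in $H/2\pi i$ and $L/2\pi i$ together with the logarithmic contributions from $z_1^{H/2\pi i} z_2^{L/2\pi i}$, has coefficients that span the six-dimensional solution space $Sol(\mathcal D_B/\mathcal I)$; this is the Frobenius/Givental construction, and it is essentially verified in the form of Proposition \ref{lemhosin}. Consequently $w$ lies in $Sol(\mathcal D_B/\mathcal I) \otimes_\bC H^{even}(\hat X_0, \bC)$. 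The right-hand side $R := \sum_{i,j} \int_{mir(E_i)} \Omega \cdot \chi^{ij} \cdot ch(\check E_j)$ is manifestly of the same type, since each period $\int_{mir(E_i)} \Omega$ is a solution of $\mathcal D_B/\mathcal I$ via the isomorphism (\ref{threeisos}).

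Second, I would pin down one distinguished component using the analysis at the MUM point in Section \ref{sec132}. There is a unique single-valued holomorphic solution $w_0(z_1,z_2)$ near the MUM, and this $w_0$ is precisely the coefficient of the top-degree class $[pt] \in H^6(\hat X_0, \bC)$ in the expansion of $w$. On the $R$ side, the component paired (via Poincar\'e duality on $H^{even}$) with the top class corresponds, under Conjecture \ref{conjihms}, to the central charge of the mirror of the structure sheaf $\cO_{\hat X_0}$, which by standard mirror symmetry is known to equal $w_0$. This matches one component of the cohomology-valued identity. To propagate the match to all components, I would exploit monodromy equivariance: the A-side monodromy around the MUM acts on $K_0$ by tensoring with $\mathcal L$ or $\mathcal H$ and therefore on cohomology by multiplication by $e^L$ or $e^H$, while the B-side monodromy acts by the unipotent integer operators computed in Section \ref{seccal}. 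Under mirror symmetry these actions are intertwined, so once the top-component match is established, the Frobenius-type recursion built into the explicit form (\ref{wformourc}) of $w$ forces agreement of $w$ and $R$ in every cohomological degree.

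The main obstacle in this plan is the reliance on the precise integrality statement of Conjecture \ref{conjihms}, which fixes the identification of $K_0(\hat X_0)$ with $H^3(X_b,\bZ)$ at the $\bC$-level. For toric complete intersections (which covers our mirror octic), this is accessible via Iritani's $\hat\Gamma$-class integral structure \cite{iritani2009integral, iritani2011quantum}: Iritani proves an explicit integral formula identifying each central charge $\int_{mir(E)}\Omega$ with a cohomological pairing of $ch(E)$ against the $\hat\Gamma$-class of $\hat X_0$, from which the decomposition of $w$ in the $ch(\check E_j)$ basis can be read off directly. In this setting the outline above becomes a rigorous proof. In full generality, however, an unconditional proof would require homological mirror symmetry for Calabi-Yau 3-folds, which remains open and is the genuine bottleneck.
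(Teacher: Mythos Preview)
The statement you are attempting to prove is labeled as a \emph{conjecture} in the paper, not a theorem, and the paper does not offer a proof of it. The author states Conjecture \ref{conjcc} as Hosono's conjecture, derives two corollaries (\ref{corinv} and \ref{corccf}) conditionally from it, and then explicitly avoids relying on it: the relevant consequences for the mirror octic (Propositions \ref{lemhosin} and \ref{prop2.2sec2}) are established directly in the appendices by elementary means. So there is no ``paper's own proof'' to compare your proposal against.

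That said, your sketch is a reasonable outline of how one would approach the conjecture in the toric complete intersection case, and you correctly identify Iritani's $\hat\Gamma$-integral structure as the key external input. A few points deserve care. First, your identification of $w_0$ as the coefficient of the top class $[pt]\in H^6$ is backwards: in the expansion (\ref{wbasexpan}) the holomorphic solution $w_0$ is the coefficient of $1\in H^0$, not of the point class; what is true is that the \emph{period} of $mir(\mathcal O_p)$ equals $w_0$, which is a dual statement. Second, the monodromy-propagation step is not quite enough on its own: the monodromy at the MUM point acts unipotently and its orbit through $w_0$ does not span the full solution space, so you also need the Frobenius structure (the $\partial_H,\partial_L$ derivatives) to generate all six solutions, as the paper does explicitly in Appendix \ref{appb}. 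Finally, you are right that the genuine obstruction in full generality is Conjecture \ref{conjihms}; without it the map $mir$ is not defined at the integral level, and the formula has no content.
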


In particular, as proposed in \cite{hosono2000local}, $w$ has the following property.

\begin{corollary}[Symplectic invariance of $w$ \cite{hosono2000local}]\label{corinv}
Assume Conjectures \ref{conjihms} and \ref{conjcc} hold, then $w$ is $Aut(H^3(X_b,\bC),Q)$ invariant; in particular it is monodromy invariant.
\end{corollary}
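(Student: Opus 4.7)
My plan is to show that the defining expression of $w$ in Conjecture \ref{conjcc} is intrinsic---independent of the choice of $K_0$-basis $\{E_i\}$---and then use Conjecture \ref{conjihms} to reinterpret any $g\in Aut(H^3(X_b,\bC),Q)$ as a basis change on the $K_0$ side.

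The first step is to verify basis-independence of the formula. Under a linear change $E'_j=\sum_k A_{kj}E_k$, the period vector transforms by $\Pi'=A^T\Pi$ (by linearity of the mirror map), the Euler pairing by $\chi'=A^T\chi A$, and hence its inverse by $(\chi')^{-1}=A^{-1}\chi^{-1}A^{-T}$; the dual basis $\{\check E_j\}$ transforms contragrediently. A direct index-shuffling shows these transformations cancel inside the expression $\sum_{i,j}\Pi_i\chi^{ij}ch(\check E_j)$, leaving the same $H^{even}(\hat X_0,\bC)$-valued function. Conceptually, the cleanest way to phrase this is that $w = ch(\hat Z)$, where $\hat Z\in K_0(\hat X_0)\otimes\bC$ is the vector representing the linear functional $Z: E\mapsto \int_{mir(E)}\Omega$ via the nondegenerate pairing $\chi$; this description makes no reference to a basis.

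The second step uses Conjecture \ref{conjihms}: the mirror map $mir_b$ is an isomorphism of polarized structures, so it carries $Aut(H^3(X_b,\bC),Q)$ isomorphically onto $Aut(K_0(\hat X_0)\otimes\bC,\chi)$. Given $g\in Aut(H^3(X_b,\bC),Q)$, let $\tilde g$ be its $K_0$ counterpart. Acting by $g$ on the cycles underlying the periods $\int_{mir(E_i)}\Omega$ is the same operation as substituting the basis $\{\tilde g E_i\}$ in the defining formula. Since $\{\tilde g E_i\}$ is again a basis of $K_0\otimes\bC$, basis-independence from step one forces $w$ to be unchanged by the action of $g$.

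For the ``in particular'' statement, the Gauss--Manin connection preserves $Q$, so the monodromy representation on $H^3(X_b,\bZ)$ factors through $Aut(H^3(X_b,\bC),Q)$, and monodromy invariance follows from what has already been proved. The main technical obstacle is the careful sign and transpose bookkeeping in step one: $\chi$ is antisymmetric (as shown in the A-polarization section via Serre duality), so one must be attentive to the direction of the duality pairing when tracking how $\check E_j$ transforms. Beyond that, the argument is formal and really just expresses the fact that the formula of Conjecture \ref{conjcc} has intrinsic geometric meaning.
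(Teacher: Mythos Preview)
Your argument is essentially the paper's proof spelled out: the formula in Conjecture~\ref{conjcc} is basis-independent (your step 1, which the paper compresses into ``follows immediately''), and Conjecture~\ref{conjihms} identifies $Aut(H^3(X_b,\bC),Q)$ with $Aut(K_0(\hat X_0)\otimes_\bZ\bC,\chi)$, whose natural action is exactly change of basis. The monodromy clause is handled identically.

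One caveat on your step 3: acting by $g$ on the cycles alone---replacing $\int_{mir(E_i)}\Omega$ by $\int_{mir(\tilde gE_i)}\Omega$ while leaving $\chi^{ij}$ and $ch(\check E_j)$ untouched---is \emph{not} literally the same operation as the full basis substitution $\{E_i\}\to\{\tilde gE_i\}$, because the latter also sends $\check E_j$ to $(\tilde gE_j)^*$. (The $\chi^{ij}$ factor is genuinely unchanged since $\tilde g$ preserves $\chi$, but the $ch(\check E_j)$ factor is not.) What basis-independence actually delivers is invariance under the \emph{simultaneous} $\tilde g$-action on both the period factor and the $ch(\check E_j)$ factor. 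That diagonal action is the correct reading of ``$Aut(H^3(X_b,\bC),Q)$-invariance'' here, and it is precisely what is used downstream in Appendix~\ref{appb} to match the monodromy on coefficients with the transpose-inverse monodromy on the integral basis. With that small rewording of step 3, your argument is complete and matches the paper's.
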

\begin{proof}

By Conjecture \ref{conjihms}, we have $ Aut(K(\hat X_0)\otimes_\bZ\bC,\chi) \cong Aut(H^3(X_b,\bC),Q) $. 
Then the corollary follows immediately from Conjecture \ref{conjcc}. The second part is due to the fact that the monodromy group is a subgroup of $Aut(H^3(X_b,\bC),Q)$.
\end{proof}

By the mirror map and Chern character, we can extend the isomorphisms in (\ref{threeisos}) as
\begin{equation}\label{exthreeisos}
Sol(\mathcal D_B/\mathcal I) \cong H_3(X_b,\bC)  \cong H^3(X_b,\bC)\cong H^{even}(\hat X_0,\bC)\cong K_0(\hat X_0)\otimes_\bZ\bC,
\end{equation}
The central charge formula provides a direct isomorphism between the left and the right-hand sides:

\begin{corollary}[Central charge formula \cite{hosono2000local,hosono2004central}]\label{corccf}
Assume Conjecture \ref{conjcc}. Then for $E\in K_0(\hat X_0)\otimes_\bZ\bC$, we have
$$\int_{mir(E)}\Omega = \int_{\hat X_0} ch(E)\cdot w\cdot td(\hat X_0).$$
\end{corollary}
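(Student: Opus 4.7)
The plan is to derive the corollary as a direct algebraic consequence of Conjecture \ref{conjcc} together with the Hirzebruch-Riemann-Roch formula \eqref{releuler} that relates the Euler pairing $\chi$ to integration of Chern characters against the Todd class. No further geometric input is required; the substance of the statement is already contained in the conjecture, and the corollary extracts it by inverting the $\chi$-pairing.

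First, I would observe that both sides of the asserted identity are $\bC$-linear functionals of $E \in K_0(\hat X_0)\otimes_\bZ\bC$: the right-hand side by linearity of $ch$ and of integration, and the left-hand side by the $\bC$-linear extension of the mirror map composed with the period $\int_{mir(\cdot)}\Omega$. It therefore suffices to verify the identity on a basis, and I would use the same basis $\{E_k\}$ that features in Conjecture \ref{conjcc}. Substituting the expression for $w$ supplied by the conjecture into the right-hand side gives
\[
\int_{\hat X_0} ch(E_k)\cdot w\cdot td(\hat X_0) \;=\; \sum_{i,j}\int_{mir(E_i)}\Omega\cdot\chi^{ij}\cdot \int_{\hat X_0}ch(E_k)\cdot ch(\check E_j)\cdot td(\hat X_0).
\]
By \eqref{releuler}, the inner integral is an instance of the Euler pairing $\chi$ evaluated on $E_k$ and $\check E_j$, modulo a dualization in the first slot. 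Interpreting $\check E_j$ as the K-theoretic dual $E_j^*$, Serre duality combined with the Calabi-Yau condition yields $\chi(E_k^*, E_j^*)=-\chi_{kj}$; alternatively, if $\check E_j$ is interpreted as the basis dual to $\{E_i\}$ with respect to $\chi$, the inner integral is (up to sign) simply $\delta_{kj}$. In either interpretation, the sum over $j$ collapses by the defining property of $\chi^{ij}$ as the matrix inverse of $\chi_{ij}$, leaving precisely $\int_{mir(E_k)}\Omega$.

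The principal bookkeeping subtlety is tracking the dualization appearing in the HRR identity $\chi(E_1,E_2)=\int_{\hat X_0} ch(E_1^*)\,ch(E_2)\,td(\hat X_0)$ alongside the antisymmetry of $\chi$, so that the signs combine consistently with Hosono's convention for $\check E_j$ to produce a clean cancellation. I expect this to be the only nontrivial step; once the conventions are fixed, the overall argument is a short linear-algebraic computation.
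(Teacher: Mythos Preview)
Your approach is correct and matches the paper's own proof, which simply observes that the identity follows from substituting the conjectured expression for $w$ and applying the Hirzebruch--Riemann--Roch formula \eqref{releuler}. With $\check E_j = E_j^*$, the inner integral is directly $\int_{\hat X_0} ch(E_j^*)\,ch(E_k)\,td(\hat X_0)=\chi(E_j,E_k)=\chi_{jk}$, so contracting with $\chi^{ij}$ yields $\delta^i_k$ without any additional sign bookkeeping.
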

\begin{proof}
By Conjecture \ref{conjcc} we have the following map
\[
\begin{split}
    K_0(\hat X_0)\otimes _\bZ\bC&\rightarrow Sol(\mathcal D_B/\mathcal I)\\
    E& \rightarrow  \int_{\hat X_0} ch(E)\cdot w\cdot td(\hat X_0),
\end{split}
\]
Then the corollary follows from the Hirzebruch–Riemann–Roch formula (\ref{releuler}). 
\end{proof}

Now, if we fix an integral symplectic basis in $K_0(\hat X_0)\otimes _\bZ\bC$, then by the symplectic invariance of $w$, the central charge formula will give an integral symplectic basis on $Sol(\mathcal D_B/\mathcal I)$, which in turn gives an integral symplectic basis for $H^3(X_b,\bC)$. For the rest of this section, we fix our symplectic form as the standard symplectic form 
\begin{equation}\label{strdsympl}
Q^{st}:=\left[\begin{array}{cccccc}0 & 0 & 0 & 0 & 0 & -1 \\0 & 0 & 0 & 0 & -1 & 0 \\0 & 0 & 0 & -1 & 0 & 0 \\0 & 0 & 1 & 0 & 0 & 0 \\0 & 1 & 0 & 0 & 0 & 0 \\1 & 0 & 0 & 0 & 0 & 0\end{array}\right].
\end{equation}

Hosono stated  \cite{hosono2000local}, and gave a heuristic proof, of a variant of the following proposition, having noted that certain special cases (i.e., complete intersections in toric variety) are implicit in \cite{hosono1995mirror,hosono1995mirror2,hosono1996gkz}.

\begin{proposition}\label{lemhosin}

Let $\hat X_0$ be a smooth Calabi-Yau 3-fold. Given an arbitrary basis $\{T_i\}_{i=1}^r$ of $H^2(\hat X_0,\bZ)/torsion$, define $\{K_j\}_{j=1}^r\subset H^4(\hat X_0,\bZ)/torsion$ by $\int_{\hat X_0}T_kK_j = \delta_{kj}$, and define $V$ via $\int_{\hat X_0} V=-1$. Then there exists an integral symmetric matrix $C:=(C_{ij})_{r\times r}$, such that 
\begin{equation}\label{hosobaisis}
\{2,(2T_k-\sum_{l}C_{kl}K_l)td^{-1}_{\hat X_0},K_{r-j},V\},\quad j,k=1,\cdots,r
\end{equation}
gives an integral symplectic basis for $K_0(\hat X_0)\otimes_\bZ\bC$ under the inverse of the Chern character. Moreover, if one assumes homological mirror symmetry or that Conjecture \ref{conjihms} holds for $\hat X_0$, then 
\[
\{1,(T_k-\sum_{l}\frac 12 C_{kl}K_l)td^{-1}_{\hat X_0},K_{r-j},V\}    
\]
is a symplectic basis for $K_0(\hat X_0)/torsion$.
\end{proposition}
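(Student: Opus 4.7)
The plan is to verify two things directly on $H^{even}(\hat X_0, \bC) \cong K_0(\hat X_0)\otimes\bC$: (a) each proposed basis element lies in $ch(K_0(\hat X_0))$ for a suitable symmetric integer matrix $C$; and (b) the pairing matrix of the Euler form $\chi(a, b) = \int_{\hat X_0} a^\vee\, b\, td(\hat X_0)$ on this basis is an integer, non-degenerate, antisymmetric matrix, i.e.\ a symplectic form on the lattice. Here $a^\vee$ denotes the involution acting as $(-1)^k$ on $H^{2k}$. The Calabi--Yau condition simplifies the arithmetic dramatically: $c_1=0$, $td = 1+c_2/12$, $td^{-1}=1-c_2/12$, and $c_2\cdot K_l = c_2^2 = 0$ in $H^{>6}$.

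First, I would expand $e_k := (2T_k - \sum_l C_{kl} K_l)\,td^{-1}$ into pure-degree pieces: $2T_k$ in degree $2$, $-\sum_l C_{kl} K_l$ in degree $4$, and $-T_k c_2/6$ in degree $6$. Combined with $\int_{\hat X_0} T_k K_l = \delta_{kl}$ and $\int_{\hat X_0} V = -1$, this reduces the pairing matrix to elementary integrals. The non-trivial entries are $\chi(2, V) = -2$ and $\chi(e_k, K_l) = -2\delta_{kl}$; the entry $\chi(e_k, e_l) = 2(C_{lk} - C_{kl}) - \tfrac{1}{3}\int T_k T_l c_2$ reduces for symmetric $C$ to $-\tfrac{1}{3}\int T_k T_l c_2$, which is an integer by Riemann--Roch (a difference of Euler characteristics of line bundles). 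All other entries vanish by degree counting. The resulting matrix is antisymmetric of non-zero determinant, giving a symplectic form on the lattice.

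Next, for integrality I would take $C_{kl} := \int_{\hat X_0} T_k^2 T_l$ (the triple intersection numbers, symmetric and integer by construction). Then $2 = ch(\mathcal{O}_{\hat X_0}^{\oplus 2})$, $V = -[pt] = ch(-\mathcal{O}_p)$, and a direct Chern-character computation gives
\[
e_k \;=\; ch\bigl(2(\mathcal{O}_{\hat X_0} - \mathcal{L}_k^{-1}) - n_k\, \mathcal{O}_p\bigr),
\]
where $\mathcal{L}_k$ is a line bundle with $c_1(\mathcal{L}_k) = T_k$ and $n_k := (2\int T_k^3 + \int T_k c_2)/6$. The integrality $n_k \in \bZ$ follows from Hirzebruch--Riemann--Roch: $\chi(\mathcal{O}_{\hat X_0}, \mathcal{L}_k) = (2\int T_k^3 + \int T_k c_2)/12 \in \bZ$, so $n_k$ is in fact an even integer. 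For the refined statement, Conjecture \ref{conjihms} (or homological mirror symmetry) identifies $K_0(\hat X_0)/\mathrm{torsion}$ as an integral symplectic lattice with $H^3(X_b,\bZ)$, providing a finer integral structure on $K_0(\hat X_0)$ in which the halved basis $\{1, (T_k - \tfrac{1}{2}\sum_l C_{kl} K_l)\,td^{-1}, K_{r-j}, V\}$ is itself integral.

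The main obstacle is showing that the degree-$4$ classes $K_{r-j}$ lie in $ch(K_0(\hat X_0))$, not merely in $H^{even}(\hat X_0, \bZ)$. The natural candidate $ch(\mathcal{O}_{C_l})$ for a curve $C_l$ representing $K_l$ introduces an $H^6$-defect $\beta_l = 1 - g(C_l) - \tfrac{1}{12}\int K_l c_2$ that is not automatically integer. I would handle this by using the specific geometry of the resolved octic---the $K_0$-generators coming from line bundles, the exceptional curves of the blow-up $\phi$, and point sheaves---to exhibit representing curves whose Euler characteristic absorbs the defect, or by adjusting the $C_{kl}$ so as to realign the degree-$4$ lattice with $ch(K_0(\hat X_0))$. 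This step requires genuine geometric input rather than the formal cohomological manipulations that carry through for any smooth Calabi--Yau $3$-fold.
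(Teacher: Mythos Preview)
Your line-bundle construction of the $e_k$ is tidier than the paper's route (which pushes forward sheaves from smooth hypersurfaces via GRR), but two computational slips undermine it. First, $C_{kl}=\int T_k^2 T_l$ is \emph{not} symmetric: already for the resolved octic $\int H^2L=4$ while $\int L^2H=0$. This is fixable by adding integer multiples of the $K_m$ to each $e_k$ to symmetrize, but as written your claim ``symmetric by construction'' is false, and with your $C$ the entry $\chi(e_k,e_l)=2(C_{lk}-C_{kl})$ does not vanish. Second, both your extra term $-\tfrac13\int T_kT_lc_2$ in $\chi(e_k,e_l)$ and the $-\tfrac{1}{12}\int K_lc_2$ in your curve defect $\beta_l$ live in $H^8(\hat X_0)=0$ and hence vanish identically; you are not tracking cohomological degree carefully.

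The substantive gap is the one you flag at the end. The proposition is stated for an \emph{arbitrary} smooth Calabi--Yau threefold, so appealing to the specific geometry of the resolved octic does not prove it. Moreover, your arithmetic worry about $\beta_l$ is misplaced: once a smooth curve $C$ with $[C]=K_l$ exists, GRR already gives $ch(i_*\mathcal O_C)=K_l+(1-g(C))[\mathrm{pt}]$, an integer defect removable by subtracting copies of $[\mathcal O_p]$. The genuine obstruction is the \emph{existence} of such an algebraic representative, i.e.\ that every class in $H^{2,2}(\hat X_0)\cap H^4(\hat X_0,\bZ)$ is algebraic. The paper invokes exactly this---Voisin's theorem establishing the integral Hodge conjecture for Calabi--Yau threefolds---together with Hironaka--Kleiman smoothing to upgrade to smooth curve representatives, as the key external input. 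Your proposal neither identifies nor uses this ingredient, so as it stands it does not establish the general statement.
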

\begin{proof}
We give a detailed proof in Appendix \ref{appa}.
\end{proof}

\begin{remark}\label{rmkaftprop}
The integral matrix $C$ in the proposition is not unique. Let $AId_{r\times r}$ be the anti-diagonal matrix with all entries in $(k,r-k)$ equal to $1$. If an integral symplectic matrix $B$ satisfies $[B,AId_{r\times r}]=0$, then Proposition \ref{lemhosin} holds with $C': = C+B$ in place of $C$

\end{remark}

\begin{remark}
In Proposition 1 of \cite{hosono2000local}, the matrix $C$ is rational and the basis 
\[
\{1,(T_k-\sum_{l}C_{kl}K_l)td^{-1}_{\hat X_0},K_{r-j},V\},\quad j,k=1,\cdots,r.
\]
is asserted to be integral symplectic. However, without the assumption of the homological mirror symmetry, the Proposition \ref{lemhosin} only guarantees the above basis to be half-integral. Nonetheless, we have improved the conclusion that $C_{ij}$ is rational to half-integral, which is very useful in numerical calculations.
\end{remark}

we now apply Proposition \ref{lemhosin} to our case. By the intersection numbers in (\ref{intnum}), we have $r=2$, $T_1 = H,T_2 = L$, $K_1 = h := \frac{1}{4}HL$, $K_2 = l:=\frac{1}{4}H^2-\frac 12 HL$, $V = -\frac 18 H^3$, and $td^{-1}(\hat X_0) = 1-2l-\frac{14}{3}h$. Then we have the following lemma.
\begin{lemma}\label{lembaourc}
For the resolved octic studied in Section \ref{secamod}, if we assume Conjecture \ref{conjihms} holds, then there exists  integers $C_{11},C_{12}=C_{21},C_{22}$ such that
\begin{equation}\label{hosbaourca}
\{1,(H-\frac12C_{11}h-\frac12C_{12}l)td^{-1}(\hat X_0),(L-\frac12C_{12}h-\frac12C_{22}l)td^{-1}(\hat X_0),l,h,-\frac18 H^3\}
\end{equation}
gives an half-integral symplectic basis for $K_0(\hat X_0)\otimes \bC$. One may  take $C_{12}$ to be any integer, or modify $C_{11}$ and $C_{22}$ by adding a common integer.
\end{lemma}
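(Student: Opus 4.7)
My plan is to deduce this lemma as a direct specialization of Proposition~\ref{lemhosin} to the toric data of $\hat X_0$ computed in Section~\ref{secamod}, and then to read off the stated ambiguity of $C$ from Remark~\ref{rmkaftprop}.

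First I would confirm that the classes fixed in the lemma are the correct inputs to the proposition. In our case $r = h^{1,1}(\hat X_0) = 2$, and the Picard group is generated by $T_1 = H$, $T_2 = L$. Using the intersection numbers in \eqref{intnum}, a short computation gives $\int_{\hat X_0} H \cdot h = \tfrac{1}{4}\int_{\hat X_0} H^2 L = 1$, $\int_{\hat X_0} L \cdot l = \tfrac{1}{4}\int_{\hat X_0} H^2 L - \tfrac{1}{2}\int_{\hat X_0} HL^2 = 1$, and $\int_{\hat X_0} H \cdot l = \int_{\hat X_0} L \cdot h = 0$, so that $K_1 = h$ and $K_2 = l$ form the intersection-theoretic dual basis to $\{T_1,T_2\}$. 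Likewise $\int_{\hat X_0} V = -\tfrac{1}{8}\int_{\hat X_0} H^3 = -1$. The formula $td^{-1}(\hat X_0) = 1 - 2l - \tfrac{14}{3}h$ is obtained by inverting $td(\hat X_0)$ in the cohomology ring, using that any product of two classes in $H^4(\hat X_0)$ vanishes on a threefold.

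Substituting this data into \eqref{hosobaisis} produces the list in \eqref{hosbaourca} up to an overall factor of two on the first $r+1 = 3$ entries. The ``moreover'' clause of Proposition~\ref{lemhosin}, which is precisely where Conjecture~\ref{conjihms} is invoked, allows this factor to be divided out while preserving the symplectic property; this yields the half-integral basis stated in the lemma, and the symmetry $C_{12} = C_{21}$ is inherited from the symmetry of $C$ in the proposition.

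Finally, to extract the ambiguity of $C$, I would apply Remark~\ref{rmkaftprop} with $r = 2$. The anti-diagonal matrix is $AId_{2\times 2} = \bigl(\begin{smallmatrix}0 & 1 \\ 1 & 0\end{smallmatrix}\bigr)$, and a direct calculation shows that the integer matrices commuting with it are exactly those of the form $\bigl(\begin{smallmatrix}a & b \\ b & a\end{smallmatrix}\bigr)$ with $a, b \in \bZ$. Replacing $C$ by $C + B$ for such a $B$ shifts $C_{12}$ by an arbitrary integer $b$ and simultaneously shifts $C_{11}$ and $C_{22}$ by a common integer $a$, which is exactly the freedom asserted in the lemma. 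I do not expect any real obstacle in executing this plan: the argument is essentially bookkeeping, with the only substantive input being the use of Conjecture~\ref{conjihms} to pass from the integral basis of Proposition~\ref{lemhosin} to the half-integral basis stated here.
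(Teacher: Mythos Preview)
Your proposal is correct and follows exactly the paper's own approach: specialize Proposition~\ref{lemhosin} (invoking Conjecture~\ref{conjihms} for the half-integral refinement) to the toric data of $\hat X_0$, then apply Remark~\ref{rmkaftprop} at $r=2$ to identify the allowed modifications of $C$ as integral matrices of the form $\bigl(\begin{smallmatrix}a & b\\ b & a\end{smallmatrix}\bigr)$. The paper's proof is terser but identical in substance; your additional verification of the dual basis $\{K_1,K_2\}=\{h,l\}$ and of the commutant of $AId_{2\times 2}$ is straightforward bookkeeping that the paper leaves implicit.
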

\begin{proof}
Specializing Proposition \ref{lemhosin} into our case gives the first half of the statement. For the second half, apply the argument in Remark \ref{rmkaftprop} to the case $r=2$. This tells us, we are allowed to twist $C$ by adding an arbitrary integral matrix $B = \left[\begin{array}{cc}B_{11} & B_{12} \\B_{12} & B_{11}\end{array}\right]$, which proves the second half of the lemma.
\end{proof}

Now, assume Conjecture \ref{conjcc} holds, then by Corollary \ref{corccf} we can calculate the monodromies for the basis (\ref{hosbaourca}) from Hosono's $w$-function (\ref{wformourc}). 

From now on, in order to 
compare with the results in \cite{candelas1994mirror1} and 
simplify the calculation in the next section, we will now work in a different symplectic form
$$
Q:=\left[\begin{array}{cccccc}0 & 0 & 0 & 1 & 0 & 0 \\0 & 0 & 0 & 0 & 1 & 0 \\0 & 0 & 0 & 0 & 0 & 1 \\-1 & 0 & 0 & 0 & 0 & 0 \\0 & -1 & 0 & 0 & 0 & 0 \\0 & 0 & -1 & 0 & 0 & 0\end{array}\right].
$$
Since $Q$ and $-Q$ have the same automorphism group, we are allowed to work with a $-Q$-symplectic basis.
By Lemma \ref{lembaourc}, the following basis is integral symplectic with respect to $-Q$
\begin{equation}\label{newsympbasis}
\{1,(H-\frac12C_{11}h-\frac12C_{12}l)td^{-1}(\hat X_0),(L-\frac12C_{12}h-\frac12C_{22}l)td^{-1}(\hat X_0),-\frac18 H^3,h,l\}.
\end{equation}

\begin{proposition}\label{prop2.2sec2}
On the affine chart $\mathrm{Spec}\,\bC[z_1,z_2]$ of the moduli space of the mirror octic. Fix a base point $b$ near the origin (the MUM point). An integral basis of $H^3(X_b,\bC)$ at $b$ is given as the mirror of the basis (\ref{hosbaourca}) with $C_{12} = 4$, $C_{22}=0$.\footnote{This choice is made to compare with those in \cite{candelas1994mirror1}.}  Denote the monodromy matrices around the divisors $D_{(1,0)}$ and $D_{(0,1)}$ as $T_{m_1}$ and $T_{m_2}$, then we have
$$
T_{m_1} = 
\left[\begin{array}{rrrrrr}1 & -1 & 0 & 6 & 4-\frac{C_{11}}{2} & 0 \\0 & 1 & 0 & \frac{-C_{11}}{2}-4 & -8 & -4 \\0 & 0 & 1 & -4 & -4 & 0 \\0 & 0 & 0 & 1 & 0 & 0\\0 & 0 & 0 & 1 & 1 & 0\\ 0 & 0 & 0 & 0 & 0 & 1\end{array}\right],\,
T_{m_2} = 
\left[\begin{array}{rrrrrr}1 & 0 & -1 & 2 & -2 & 0 \\0 & 1 & 0 & -2 & -4 & 0 \\0 & 0 & 1 & 0 & 0 & 0 \\0 & 0 & 0 & 1 & 0 & 0\\0 & 0 & 0 & 0 & 1 & 0\\ 0 & 0 & 0 & 1 & 0 & 1\end{array}\right].
$$
\end{proposition}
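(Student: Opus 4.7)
The plan is to use the central charge formula (Corollary~\ref{corccf}) to convert the period computation into an elementary calculation in the cohomology ring $H^{even}(\hat X_0,\bC)$. Assuming Conjectures~\ref{conjihms} and~\ref{conjcc}, each element $v_i$ of the basis (\ref{newsympbasis}) (which is the reordering of (\ref{hosbaourca}) adapted to the symplectic form $-Q$) is the Chern character of an integral K-theory class on $\hat X_0$, and the associated period is $\Pi_{v_i}(z_1,z_2)=\int_{\hat X_0}v_i\cdot w(z_1,z_2)\cdot td(\hat X_0)$. Hence the transformation of $\vec\Pi$ under analytic continuation around $D_{(1,0)}$ and $D_{(0,1)}$ is determined completely by the monodromy of $w$.

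First I would make the monodromy of $w$ explicit. Inspecting (\ref{wformourc}), the factor $(z_1/2^8)^{d_1+H/(2\pi i)}$ picks up a multiplicative $e^H$ under $z_1\mapsto e^{2\pi i}z_1$, with $H$ acting on $H^{even}(\hat X_0,\bC)$ by cup product (and nilpotent since $H^4=0$). So $w\mapsto e^H\cdot w$ around $D_{(1,0)}$, and analogously $w\mapsto e^L\cdot w$ around $D_{(0,1)}$. Under the convention that $T_{m_1}$ and $T_{m_2}$ act on the dual basis of cycles in $H_3(X_b,\bZ)$ (so that the period column vector transforms by the inverse), one identifies $T_{m_1}$ with left multiplication by $e^{-H}$ and $T_{m_2}$ with left multiplication by $e^{-L}$ on $H^{even}(\hat X_0,\bC)$, expressed in the basis $\{v_i\}$.

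Next I would compute these matrices explicitly. From (\ref{intnum}) and Section~\ref{secamod}, the intersection relations $L^2=0$, $HL=4h$, $H^2=4l+8h$, $H^3=8p$, $Hh=Ll=p$, $Hl=Lh=0$ (with $p:=\frac{1}{8}H^3$ the point class) together with $td^{-1}(\hat X_0)=1-2l-\frac{14}{3}h$ reduce everything to routine linear algebra. Concretely, one expands each $v_i$ in the $\bC$-basis $\{1,H,L,p,h,l\}$, applies $e^{-H}=1-H+2l+4h-\frac{4}{3}p$ (or $e^{-L}=1-L$, which terminates because $L^2=0$), and re-expresses the result in the $\{v_j\}$-basis; the rows of $T_{m_1}$ and $T_{m_2}$ then read off directly. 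With the choice $C_{12}=4,\,C_{22}=0$ made to align with the conventions of~\cite{candelas1994mirror1}, integrality of every entry is automatic from Lemma~\ref{lembaourc}, and the resulting matrices preserve $-Q$ because $e^{\pm H}$ and $e^{\pm L}$ are automorphisms of $(K_0(\hat X_0)\otimes\bC,\chi)$ by multiplicativity of the Chern character.

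I expect the main obstacle to be purely conventional: one must carefully track the orientation of the monodromy loops and the precise identification between the K-theoretic basis $\{v_i\}$ and the integral basis of $H^3(X_b,\bC)$ produced by the mirror map, in order to fix the sign in $e^{\pm H}$ unambiguously. This can be pinned down by testing against the unique single-valued holomorphic solution $w_0$ at the MUM point noted at the end of Section~\ref{sec132}: $w_0$ is monodromy-invariant, and this invariance forces the first entry of the first rows of $T_{m_1}$ and $T_{m_2}$ to be $1$, which selects the correct sign convention.
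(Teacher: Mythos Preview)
Your proposal is correct and rests on the same underlying observation as the paper's proof: the monodromy of $w$ around $z_i=0$ is governed entirely by the factor $z_i^{T_i/(2\pi i)}$ in (\ref{wformourc}). The paper (Appendix~\ref{appb}) carries this out componentwise: it expands $w$ in the basis (\ref{newsympbasis}), expresses each period coefficient as a linear combination of Taylor coefficients $\partial_H^a\partial_L^b w|_{H=L=0}$, computes the monodromy of those Taylor coefficients one at a time by tracking powers of $\log z_i$, assembles a $7\times 7$ monodromy matrix on the partial derivatives, converts it to the $6\times 6$ matrix on periods via the relations analogous to (\ref{reltwows}), and finally takes the transpose inverse. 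Your route packages the same computation more structurally: you note at once that the monodromy acts on $w$ by cup product with $e^H$ (resp.\ $e^L$), so that $T_{m_i}$ is simply the matrix of left multiplication by $e^{-H}$ (resp.\ $e^{-L}$) on $H^{even}(\hat X_0,\bC)$ in the basis $\{v_i\}$. This bypasses the intermediate partial-derivative bookkeeping and makes symplecticity transparent (tensoring by a line bundle preserves $\chi$), at the price of having to work out the ring relations $H^2=4l+8h$, $HL=4h$, $Hh=Ll=p$, $Hl=Lh=0$ explicitly in order to read off matrix entries. The two computations are equivalent and yield the same matrices.
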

\begin{proof}
In order to compare with the result in \cite{candelas1994mirror1}, we will give a detailed calculation in Appendix \ref{appb}.
\end{proof}

From the calculation in the next section,  we will see that all the monodromies will stay (half)-integral when we take $C_{11}$ to be any even number.
\begin{remark}
The monodromy matrices in Proposition \ref{prop2.2sec2} differ from those in \cite{candelas1994mirror1} both at entries $(1,4)$ by a minus sign. In particular, if we change our basis by an integral symplectic transformation
\[
\left[
\begin{array}{c}
     1 \\
     T_1\\
     T_2\\
     V\\
     K_1\\
     K_2
\end{array}
\right] \rightarrow
\left[
\begin{array}{cccccc}
1 & 0 & 0 & 0 & -12 & 0\\
0 & 1 & 0 & -12 & 0 & 0\\
0 & 0 & 1 & 0 & 0 & 0\\
0 & 0 & 0 & 1 & 0 & 0\\
0 & 0 & 0 & 0 & 1 & 0\\
0 & 0 & 0 & 0 & 0 & 1\\
\end{array}
\right]\cdot
\left[
\begin{array}{c}
     1 \\
     T_1\\
     T_2\\
     V\\
     K_1\\
     K_2
\end{array}
\right],
\]
 then the resulting monodromy matrix will have the same form as those in \cite{candelas1994mirror1}.
\end{remark}

\section{Calculation of the nilpotent cones} \label{seccal}

In this section, we will finish steps (3)-(5) in Section \ref{olapp}. Recall in Section \ref{secdiscloci} we have constructed a compact moduli space $\tilde B$, where the discriminant loci consist of normal crossing boundary divisors.

Now fix a base point $b$ in $B$ near the maximal unipotent point and an integral basis of $H^3(X_b,\bC)$ as in Proposition \ref{prop2.2sec2}. Then the (candidate) nilpotent fan $\Sigma$ can be calculated as follows. For each normal crossing intersection point $p$ of two divisors, say $D_1,D_2$, we take two small loops $l_1,l_2$ based at a point $p'\in B$ near the intersection, such that $l_i$ winds around $D_i$ once. More precisely, locally the smooth part of moduli space is isomorphic to $\Delta^*\times\Delta^*$, and the two loops above correspond to the two generators of the free abelian group $\pi_1(\Delta^*\times\Delta^*)$. Then we chose a path $\beta_{bp'}$ from $b$ to $p'$, and calculate the monodromy $T_i$ around the path $\beta_{bp'}\cdot l_i\cdot\beta_{p'b}$, for each $i=1,2$. Then the nilpotent cone associated to $p$ is $\sigma_p:=\{\sum_{i= 1}^2\log(T_i)t_i:t_i>0,t_i\in\bQ\}\subset End(H^3(X_p,\bQ),Q) = \mathfrak{sp}(6,\bQ)$. Now take the union of all these cones, denote it as $\Sigma'$, then (candidate) fan $\Sigma$ is the $\Gamma$-adjoint orbit of $\Sigma'$, where $\Gamma\subset Aut(H^3(X_b,\bZ),Q) = Sp(6,\bZ)$ is a finite subindex subgroup that will be chosen to make $\Sigma$ indeed a fan.

As pointed out in Section \ref{olapp}, we will first calculate the monodromy operators in a non-integral basis (\ref{basisomega}) and then transform it into the integral basis in Proposition \ref{prop2.2sec2}.

\subsection{Monodromy matrices in basis $\omega$}\label{secmmbo}
The monodromy action on the basis $\omega$ is calculated numerically via the parallel transportation by the Gauss-Manin connection given in (\ref{GMcon1}) and (\ref{GMcon2}). The details of this calculation are given as follows.

\subsubsection{Algorithm for parallel transportation}
Consider a loop $\gamma:[0,1]\rightarrow  B$ with $b =\gamma(0) = \gamma(1)$, and a local section $s(z_1,z_2)$ of $E$. The section is parallel along $\gamma$ if and only if
\begin{align*}
0 &= \nabla_{\dot{\gamma}(t)}s(z_1,z_2) 
=\sum_{i=1}^2 \dot{z}_i(t)\nabla_{\partial_{z_i}}\left(\sum_{k=1}^6s^k\omega_k\right)\\
&= \sum_{i=1}^2 \dot{z}_i(t)\left(\sum_{k=1}^6\partial_{z_i}s^k\cdot \omega_k  + \sum_{k=1}^6s^k\nabla_{\partial_{z_i}}\omega_k \right) \\
& = \sum_{i=1}^2 \dot{z}_i(t)\left(\sum_{k=1}^6\partial_{z_i}s^k\cdot \omega_k +\sum_{j,k=1}^6s^k\Gamma_{kj}^{(i)} \omega_j \right)\\
& = \sum_{k=1}^6\left(\sum_{i=1}^2 \dot{z}_i(t)\left(\partial_{z_i}s^k+\sum_{j=1}^6s^j\Gamma^{(i)}_{jk}\right)\right)\omega_k \\
& = \sum_{k=1}^6 \left(\dot{s}^k(t) + \sum_{i,j=1}^6\dot{z}_i\Gamma_{jk}^{(i)}s^j(t) \right)\omega_k \\
&= \sum_{k=1}^6 \left(\dot{s}^k(t) + \sum_{i,j=1}^6\dot{z}_iM_{kj}^{(i)}s^j(t)\right)\omega_k,
\end{align*}
which gives
$$
\dot{s}(t)  = -\sum_{i,k=1}^k\dot{z}_i\omega_kM_{kj}^{(i)}s^j(t).
$$
Then the parallel transportation of a vector $v_0\in H^3(X_b,\bC)$ around the loop $\gamma$ is
$$
v_1 = \int_0^1\dot{v}(t)dt =-\int_{0}^1\left( \sum_{i,k=1}^6\omega_kM_{kj}^{(i)}v^j(t)\right)dt.
$$

The numerical calculation is an iterated linear approximation
$$
\mathbf{v}_{\frac{k}{N}} := -\sum_{l,m=1}^6\omega_lM_{lm}^{(i)}\left(z_1\left(\frac{k-1}{N}\right),z_2\left(\frac{k-1}{N}\right)\right)\mathbf{v}^l_{\frac{k-1}{N}}, \quad k=1,\cdots, N,
$$
where we have divided the loop into $N$ equal pieces, and $\mathbf{v}_{\frac{k}{N}}=(\mathbf{v}_{\frac{k}{N}}^1,\dots,\mathbf{v}_{\frac{k}{N}}^6)$ is the result at step $k$. Apparently, we have $\mathbf{v}_0 = v_0$ and $\mathbf v_{1}\rightarrow v_1$ when $N\rightarrow \infty$. Then the monodromy matrix is $S = [\mathbf{col}_1,\cdots,\mathbf{col}_6]^T$, where $\omega^T\cdot\mathbf{col}_k$ is the parallel transport of the basis vector $\omega_k$ along $\gamma$.

\subsubsection{Numerical calculation of monodromy matrices}

We start by working in the affine chart $\mathrm{Spec}\,\bC[z_1,z_2]$ and pick a base point $b=(10^{-4},10^{-4})$ near the maximal unipotent point $(0,0)$. As shown in Figure \ref{discrilocalz1z2path}, to calculate the monodromy operators along $D_{(1,0)}$ and $C_{\infty}$, we choose the loop to be $l_1$ and $l_2$, respectively. We take the number of steps to be $N=10^6$. The monodromy matrix  $S_{m_i}$ for $l_i$ should approximately equal to $\exp(-2i\pi\mathrm{Res}_{z_i=0}\Gamma^{(i)})$ which can serve as a validity check.

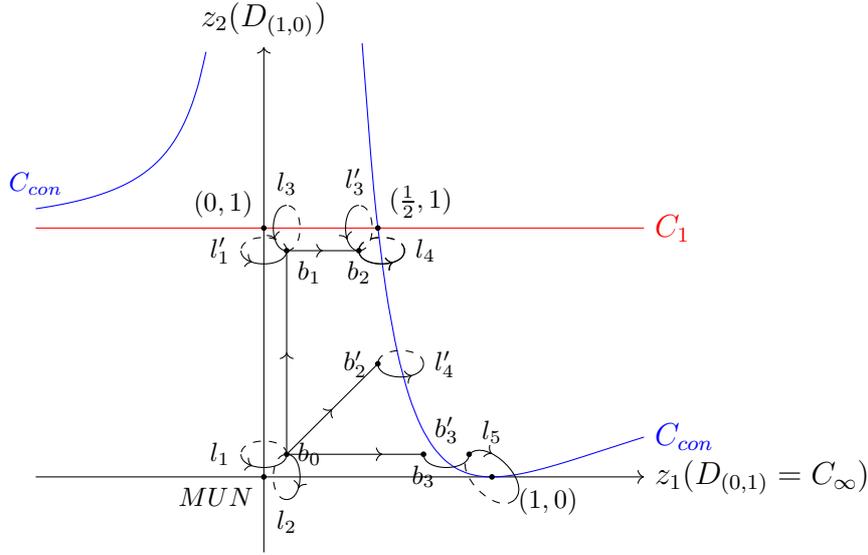
\begin{figure}
\begin{center}
\begin{tikzpicture}
  \draw[->] (-3, 0) -- (5, 0) node[right] {$z_1 (D_{(0,1)}=C_\infty)$};
  \draw[->] (0, -1) -- (0, 5.7) node[above] {$z_2 (D_{(1,0)})$};
  \draw[scale=2, domain=-1.5:-0.38, smooth, variable=\x, blue] plot ({\x}, {1/10*(1-1/\x)^2+3/2});
  \draw[scale=3, domain= 0.431:5/3, smooth, variable=\x, blue] plot ({\x}, {11/10*(1-1/\x)^2}) node[right] {$C_{con}$};
  \draw[scale=3, domain=-1:5/3, smooth, variable=\x, red] plot ({\x}, {11/10}) node[right] {$C_{1}$};
  \node [above] at (-3, 36/10) {\footnotesize{$\color{blue}{C_{con}}$}};
   \draw [fill] (0,33/10) circle [radius=0.03];
 \node [above left] at (0,33/10) {\footnotesize{$(0,1)$}};
\draw [fill] (3,0) circle [radius=0.03];
 \node [below right] at (3.2,0) {\footnotesize{$(1,0)$}};
 \draw [fill] (1.5,33/10) circle [radius=0.03];
 \node [above right] at (1.5,33/10) {\footnotesize{$(\frac{1}{2},1)$}};
 \draw [fill] (0,0) circle [radius=0.03];
 \node [below left] at (0,0) {\footnotesize{$MUN$}};
 \draw [fill] (0.3,0.3) circle [radius=0.03];
 \node [right] at (0.3,0.3) {\footnotesize{$b_0$}};
  \draw[dashed] (0.3,0.3) to [out=90,in=90] (-0.3,0.3);
  \draw[ decoration={markings, mark=at position 0.3 with {\arrow{>}}}, postaction={decorate}] (-0.3,0.3)  to [out=-90,in=-90]   (0.3,0.3);
  \draw[decoration={markings, mark=at position 0.7 with {\arrow{>}}}, postaction={decorate}] (0.3,0.3) to [out=0,in=0] (0.3,-0.3);
  \draw[dashed] (0.3,-0.3)  to [out=180,in=180]   (0.3,0.3);
  \draw[decoration={markings, mark=at position 0.5 with {\arrow{>}}}, postaction={decorate}] (0.3,0.3) -- (0.3,3);
  \draw[dashed] (0.3,3) to [out=90,in=90] (-0.3,3);
  \draw[ decoration={markings, mark=at position 0.3 with {\arrow{>}}}, postaction={decorate}] (-0.3,3)  to [out=-90,in=-90]   (0.3,3);
  \draw[dashed] (0.3,3) to [out=0,in=0] (0.3,3.6);
  \draw[decoration={markings, mark=at position 0.7 with {\arrow{>}}}, postaction={decorate}] (0.3,3.6)  to [out=180,in=180]   (0.3,3);
   \draw [fill] (0.3,3) circle [radius=0.03];
 \node [below right] at (0.3,3) {\footnotesize{$b_1$}};
  \draw [fill] (1.25,3) circle [radius=0.03];
 \node [below] at (1.25,3) {\footnotesize{$b_2$}};
  \draw[decoration={markings, mark=at position 0.5 with {\arrow{>}}}, postaction={decorate}] (0.3,3) -- (1.25,3);
  \draw[dashed] (1.25,3) to [out=0,in=0] (1.25,3.6);
  \draw[decoration={markings, mark=at position 0.7 with {\arrow{>}}}, postaction={decorate}] (1.25,3.6)  to [out=180,in=180]   (1.25,3);
  \draw[decoration={markings, mark=at position 0.7 with {\arrow{>}}}, postaction={decorate}] (1.25,3) to [out=-90,in=-90] (1.85,3);
  \draw[dashed] (1.85,3)  to [out=90,in=90]   (1.25,3);
    \draw[decoration={markings, mark=at position 0.7 with {\arrow{>}}}, postaction={decorate}] (1.25,3) to [out=-90,in=-90] (1.85,3);
  \draw[dashed] (1.85,3)  to [out=90,in=90]   (1.25,3);
  \draw[decoration={markings, mark=at position 0.5 with {\arrow{>}}}, postaction={decorate}] (0.3,0.3) -- (1.5,1.5);
    \draw[decoration={markings, mark=at position 0.7 with {\arrow{>}}}, postaction={decorate}] (1.5,1.5) to [out=-90,in=-90] (2.1,1.5);
  \draw[dashed] (2.1,1.5)  to [out=90,in=90]   (1.5,1.5);
    \draw [fill] (1.5,1.5) circle [radius=0.03];
 \node [left] at (1.5,1.5) {\footnotesize{$b_2'$}};
 \draw [fill] (2.1,0.3) circle [radius=0.03];
 \node [below] at (2.1,0.3) {\footnotesize{$b_3$}};
    \draw (2.1,0.3) to [out=-90,in=-90] (2.7,0.3);
  \draw[decoration={markings, mark=at position 0.7 with {\arrow{>}}}, postaction={decorate}] (0.3,0.3) -- (2.1,0.3);
    \draw [fill] (2.7,0.3) circle [radius=0.03];
 \node [above left] at (2.7,0.3) {\footnotesize{$b_3'$}};
 %
 %
   \draw[decoration={markings, mark=at position 0.3 with {\arrow{>}}}, postaction={decorate}] (2.7,0.3) to [out=45,in=45] (3.3,-0.3);
  \draw[dashed] (3.3,-0.3)  to [out=-135,in=-135]   (2.7,0.3);
    %
  %
 \node [below] at (0.3,-0.3) {\footnotesize{$l_2$}};
  \node [left] at (-0.3,0.3) {\footnotesize{$l_1$}};
  \node [left] at (-0.3,3.0) {\footnotesize{$l_1'$}};
  \node [above] at (0.3,3.6) {\footnotesize{$l_3$}};
  \node [above] at (1.2,3.6) {\footnotesize{$l_3'$}};
  \node [right] at (1.85,3) {\footnotesize{$l_4$}};
  \node [right] at (2.1,1.5) {\footnotesize{$l_4'$}};
  \node [above] at (3,0.3) {\footnotesize{$l_5$}};
\end{tikzpicture}
\caption{Discriminant locus and the loops used to calculate the monodromy operators in the affine chart $(z_1,z_2)$.}
\label{discrilocalz1z2path}
\end{center}
\end{figure}
With an error $\sim 10^{-5}$, the result is
\begin{equation}\label{mummonnum}
S_{m_1} = 
\left[\begin{array}{rrrrrr}1 & 1 & 0 & 0 & 2 & -4/3 \\0 & 1 & 0 & 0 & 4 & -4 \\0 & 0 & 1 & 4 & 0 & -2 \\0 & 0 & 0 & 1 & 0 & -1\\0 & 0 & 0 & 0 & 1 & -2\\ 0 & 0 & 0 & 0 & 0 & 1\end{array}\right],\,
S_{m_2} = 
\left[\begin{array}{rrrrrr}1 & 0 & 1 & 0 & 0 & 0 \\0 & 1 & 0 & 4 & 0 & 0 \\0 & 0 & 1 & 0 & 0 & 0 \\0 & 0 & 0 & 1 & 0 & 0\\0 & 0 & 0 & 0 & 1 & -1\\ 0 & 0 & 0 & 0 & 0 & 1\end{array}\right]
\end{equation}
Now, we continue to calculate the monodromy operators in this affine chart. For the other operators, we conduct parallel transportation along the following loops
\begin{equation}
\begin{array}{ll}
    (1) S_{C_1}: \beta_{b_0b_1}\cdot l_3\cdot \beta_{b_1b_0},& b_1 = (10^{-4},1-10^{-4}),r(l_1') = 10^{-4}.\\
    (2) S_{C_{con_1}}: \beta_{b_0b_2'} \cdot l_4'\cdot \beta_{b_2'b_0},& b_2 = (95/110,1/100),r(l_4') = 10^{-4}.\\
    (3) S_{E_1}: \beta_{b_0b_3}\cdot \beta_{b_3b_3'}\cdot l_5\cdot\beta_{b_3'b_3}\cdot\beta_{b_3b_0}, &b_3' = (1-10^{-4},10^{-4}), r(l_5) =  \sqrt{2}\cdot10^{-4}.\\
    (4) S_{E_2}: \beta_{b_0b_3}\cdot \beta_{b_3b_3'}\cdot l_5^2\cdot\beta_{b_3'b_3}\cdot\beta_{b_3b_0}, &b_3' = (1-10^{-4},10^{-4}), r(l_5) =  \sqrt{2}\cdot10^{-4}.
\end{array}
\end{equation}
where $\beta_{ab}$ denotes the path starting from $a$ ending at $b$, and the meaning for the other notations should be clear by consulting Figure \ref{discrilocalz1z2path}. 
We note, that the loops for the exceptional divisors are first chosen in $\tilde B$ and then pushed forward to $ B$.

The results are a lot messier in these cases, for example, if we approximate parallel transport along both $\beta_{b_0b_2'}$ and $l_4'$ with $10^{6}$ steps each, the result keeping four significant digits is
\begin{equation}\label{con1numma}
\begin{split}
S_{con_1} & =  
\left[\begin{array}{rrr}1.003 - 29.51i & 69.37+0.0064i & 49.72+0.004587i  \\35.55 + 0.004223i & 0.9901 + 83.56i & - 0.007114 + 59.9i \\10.03 + 0.001375i & - 0.003232 + 23.57i & 0.9977 + 16.89i  \\- 0.0004211 + 2.347i & - 5.516 - 0.0009899i & - 3.954 - 0.0007095i \\- 0.001059 + 6.378i & - 14.99 - 0.002489i & - 10.75 - 0.001784i \\ 0.9991 + 0.0002574i & - 0.000605 + 2.349i & - 0.0004336 + 1.683i \end{array}\right.\\
& \quad \quad \left.\begin{array}{rrr} - 0.04224 + 457.9i & - 0.02736 + 296.5i & 872.8 + 0.08052i\\  - 551.6 - 0.06551i & - 357.2 - 0.04243i & - 0.1249 + 1051.0i\\  - 155.6 - 0.02133i & - 100.7 - 0.01381i & - 0.04066 + 296.5i\\ 1.007 - 36.41i & 0.004231 - 23.58i & - 69.4 - 0.01245i\\ 0.01643 - 98.97i & 1.011 - 64.08i & - 188.6 - 0.03131i\\ - 15.5 - 0.003993i & - 10.04 - 0.002586i & 0.9924 + 29.55i \end{array}\right].
\end{split}
\end{equation}
So we will not list all the numerical results here. Now, there are divisors in the discriminant loci (\ref{disclocus}), (\ref{disclocus2}) that located outside the affine chart $\mathrm{Spec}\,\bC[z_1,z_2]$. These are the divisors $C_0 = D_{(-1,0)}$, $D_{(1,-2)}$, $D_{(0,-1)}$, $D_{(1,-1)}$, $D_{(-1,-1)}$, and $E_0$.
In order to calculate the monodromy around these divisors, we need to work in other four affine charts. The first one is
\[
\begin{split}
    \mathrm{Spec}\,\bC[z_1',z_2'] &\rightarrow \bar B\\
    (z_1',z_2')&\rightarrow [1:z_2':z_1':1].
\end{split}
\]
Then from (\ref{gitmoduli}), on the intersection  $\mathrm{Spec}\,\bC[z_1',z_2']\cap\mathrm{Spec}\,\bC[z_1,z_2]$, we have $(z_1',z_2') = (z_1^{-1},z_2)$. The discriminant in this chart is given in Figure \ref{figsecondchart}.

\begin{figure}
\begin{center}
\begin{tikzpicture}
  \draw[->] (-3, 0) -- (5, 0) node[right] {$z'_1 (D_{(0,1)}=C_\infty)$};
  \draw[->] (0, -1.5) -- (0, 4.5) node[above] {$z_2' (C_0=D_{(-1,0)})$};
  \draw[scale=1, domain= -0.9:4.9, smooth, variable=\x, blue] plot ({\x}, {2*(1-\x/2)^2}) node[right] {$C_{con}$};
  \draw[scale=1, domain=-3:5, smooth, variable=\x, red] plot ({\x}, {2}) node[right] {$C_{1}$};
   \draw [fill] (0,2) circle [radius=0.03];
 \node [above right] at (0,2) {\footnotesize{$(0,1)$}};
    \draw [fill] (4,2) circle [radius=0.03];
 \node [above left] at (4,2) {\footnotesize{$(2,1)$}};
\draw [fill] (2,0) circle [radius=0.03];
 \node [below] at (2,0) {\footnotesize{$(1,0)$}};
 \draw [fill] (0,0) circle [radius=0.03];
  %
  \draw [fill] (-1,-1) circle [radius=0.03];
  \node [below ] at (-1,-1) {\footnotesize{$c_0$}};
  \draw [fill] (-0.3,-0.3) circle [radius=0.03];
  \node [below ] at (-0.3,-0.3) {\footnotesize{$c_1$}};
    \draw [fill] (-0.3,0.3) circle [radius=0.03];
  \node [left] at (-0.3,0.3) {\footnotesize{$c_2$}};
    \draw [fill] (-0.3,1.7) circle [radius=0.03];
  \node [below left] at (-0.3,1.7) {\footnotesize{$c_3$}};
      \draw [fill] (-0.3,2.3) circle [radius=0.03];
  \node [left] at (-0.3,2.3) {\footnotesize{$c_3$}};
  \draw[decoration={markings, mark=at position 0.5 with {\arrow{>}}}, postaction={decorate}] (-1,-1) -- (-0.3,-0.3);
  \draw[decoration={markings, mark=at position 0.5 with {\arrow{>}}}, postaction={decorate}] (-0.3,0.3) -- (-0.3,1.7);

   \draw[decoration={markings, mark=at position 0.3 with {\arrow{>}}}, postaction={decorate}] (-0.3,1.7) to [out= -45,in=-45] (0.3,2.3);
  \draw[dashed] (0.3,2.3)  to [out=135,in=135]   (-0.3,1.7);
  \draw[dashed] (-0.3,-0.3) to [out=0,in=0] (-0.3,0.3);
  \draw[decoration={markings, mark=at position 0.7 with {\arrow{>}}}, postaction={decorate}] (-0.3,0.3)  to [out=180,in=180]   (-0.3,-0.3);
  \draw[decoration={markings, mark=at position 0.7 with {\arrow{>}}}, postaction={decorate}] (-0.3,-0.3) to [out=-90,in=-90] (0.3,-0.3);
  \draw[dashed] (0.3,-0.3)  to [out=90,in=90]   (-0.3,-0.3);
  \draw[dashed] (-0.3,1.7) to [out=0,in=0] (-0.3,2.3);
  \draw[decoration={markings, mark=at position 0.7 with {\arrow{>}}}, postaction={decorate}] (-0.3,2.3)  to [out=180,in=180]   (-0.3,1.7);
   \draw[dashed] (-0.3,2.3) to [out=0,in=0] (-0.3,2.9);
  \draw[decoration={markings, mark=at position 0.7 with {\arrow{>}}}, postaction={decorate}] (-0.3,2.9)  to [out=180,in=180]   (-0.3,2.3);
     \draw[decoration={markings, mark=at position 0.3 with {\arrow{>}}}, postaction={decorate}] (-0.3,-0.3) to [out= -45,in=-45] (0.3,0.3);
  \draw[dashed] (0.3,0.3)  to [out=135,in=135]   (-0.3,-0.3);
     \node [right] at (0.3,-0.3) {\footnotesize{$l_6$}};
     \node [left] at (-0.3,-0.3) {\footnotesize{$l_2'$}};
      \node [above] at (-0.3,2.9) {\footnotesize{$l_4''$}};
    \node [right] at (0.2,1.8) {\footnotesize{$l_7$}};
        \node [right] at (0.3,0.3) {\footnotesize{$l_8$}};
        \node [left] at (-0.4,1.8) {\footnotesize{$l_3''$}};
\end{tikzpicture}
\end{center}
\caption{The discriminant locus in the affine chart $\mathrm{Spec}\,\bC[z_1',z_2']$.}
\label{figsecondchart}
\end{figure}

In this affine chart, we choose a base point $c_0$ at $(-1,-1)$. The nilpotent cones we want to calculate in this chart are those associated to the point $(0,0)$ and $(0,1)$, the rest cones on this chart are already calculated in the chart $\mathrm{Spec}\,\bC[z_1,z_2]$. As we discussed in Section \ref{secdiscloci}, for the point $(0,0)$ and $(0,1)$, we need to blow them up, this results in five cones.

The Gauss-Manin connection in coordinates $(z_1',z_2')$ can be calculated by a change of coordinates from  $(z_1,z_2)$. This is because the restriction of the transition map $g(z_1,z_2) = (z_1^{-1},z_2)$ to the smooth locus is a local isomorphism, thus $g^*(\mathcal D_{\mathrm{Spec}\,\bC[z_1,z_2]}/I) \cong \mathcal D_{\mathrm{Spec}\,\bC[z_1',z_2']}/g^*I$ (restricted to the smooth locus).

We now calculate the monodromy operators at the point $c_0$ using this connection. After that, since $g^{-1}(-1,-1) = (-1,-1)$, we take a path in the affine chart $\mathrm{Spec}\,\bC[z_1,z_2]$ between $c_0$ and $b_0$ to parallel transport the result back to $b_0$. More precisely, we take $\beta_{c_0b_0}$ to be the composition of the line from $(-1,-1)$ to $(-10^{-4},-10^{-4}) $ and the anti-clockwise loop from $(10^{-4},10^{-4})$ to $(10^{-4},10^{-4})$.

Notice that there are only four monodromy operators we need to calculate in this chart. This is because, by our choice of $\beta_{c_0b_0}$,  the loop $\beta_{b_0c_0}\cdot\beta_{c_0c_1}\cdot l_2'\cdot \beta_{c_1c_0}\cdot \beta_{c_0b_0}$ is homotopic to $l_2$, and $\beta_{b_0c_0}\cdot\beta_{c_0c_1}\cdot\beta_{c_1c_2}\cdot\beta_{c_2c_3}\cdot l_3''\cdot\beta_{c_3c_2}\cdot\beta_{c_2c_1}\cdot \beta_{c_1c_0}\cdot \beta_{c_0b_0}$ is homotopic to $l_3$. However, one should note, that the loop used to calculate the monodromy around $C_{con}$ in this chart is different than those in $\mathrm{Spec}\,\bC[z_1,z_2]$. The choice of loops is drawn in Figure \ref{figsecondchart}.  Passing to $\tilde B$, we need to blow up $(0,0)$ and $(0,1)$ once and $(1,0)$ twice. The coordinate of $c_1$ is $(-10^{-4},-10^{-4})$ and the rest of the numerical data can be read off it. We note for the exceptional divisors $D_{(-1,-1)}$ and $E_0$, the loop will be the composition of $l_8$ and $l_7$ with paths connecting them to $c_0$.
We denote the resulting monodromy matrices as $S_{C_0}$, $S_{D_{(-1,-1)}}$, $S_{con_2}$, and $S_{E_0}$.

For the second affine chart, we choose it to be
$\mathrm{Spec}\,\bC[z_1'',z_2'']$
\[
\begin{split}
    \mathrm{Spec}\,\bC[z_1'',z_2''] &\rightarrow \bar B\\
    (z_1'',z_2'')&\rightarrow [1:1:z_1'':z_2''].
\end{split}
\] 
For the transition map on the intersection  $\mathrm{Spec}\,\bC[z_1'',z_2'']\cap\mathrm{Spec}\,\bC[z_1,z_2]$, we have $(z_1'',z_2'') = \pm( z_1^{-1}z_2^{-1/2}, z_2^{-1/2})$. The sign $\pm$ shows up due to the fact that this map is not a local isomorphism, since $[1:1:z_1'':z_2''] = [1:1:-z_1''
:-z_2'']$ under the equivalence relation defined in (\ref{gitmoduli}). Instead, the map above is locally a double cover. 

Nonetheless, the Gauss-Manin connection can still be calculated by coordinate transformation. This is due to the fact that the transition map is unramified on the smooth locus, thus flat. The discriminant locus in this chart is given in Figure \ref{figthirdchart}. We only draw the upper half plane of this chart due to the fact that this chart is locally a double cover of $\bar B$. For example, $(1,0)$ and $(-1,0)$ represent the same point on $\bar B$. Moreover,  $C_{con}$ lifted to two irreducible components, and half arcs represent loops in $B$.

\begin{figure}
\begin{center}
\begin{tikzpicture}
  \draw[->] (-3, 0) -- (4.5, 0) node[right] {$z''_1 (D_{(1,-2)})$};
  \draw[->] (0, -1) -- (0, 4.8) node[above] {$z_2'' (C_0 = D_{(-1,0)})$};
  \draw[scale=1, domain= -2:2.6, smooth, variable=\x, blue] plot ({\x}, {2+\x}) node[below right] {$C_{con}$};
  \draw[scale=1, domain= 2:4.5, smooth, variable=\x, blue] plot ({\x}, {-2+\x})
  node[above] {$C_{con}$};
  \draw[scale=1, domain=-3:4.5, smooth, variable=\x, red] plot ({\x}, {2}) node[right] {$C_{1}$};
   \draw [fill] (0,2) circle [radius=0.03];
 \node [above left] at (0,2) {\footnotesize{$(0,1)$}};
 \draw [fill] (-2,0) circle [radius=0.03];
 \node [below left] at (-2,0) {\footnotesize{$(-1,0)$}};
    \draw [fill] (4,2) circle [radius=0.03];
 \node [above left] at (4,2) {\footnotesize{$(2,1)$}};
\draw [fill] (2,0) circle [radius=0.03];
 \node [below] at (2,0) {\footnotesize{$(1,0)$}};
 \draw [fill] (-0.3,-0.3) circle [radius=0.03];
 \node [below left] at (-0.3,-0.3) {\footnotesize{$d_0'$}};
 \draw [fill] (0.3,0.3) circle [radius=0.03];
 \node [above] at (0.3,0.3) {\footnotesize{$d_1$}};
 \draw [fill] (-3,-0.8) circle [radius=0.03];
  \node [below] at (-3,-0.8) {\footnotesize{$d_0$}};
 \draw[decoration={markings, mark=at position 0.7 with {\arrow{>}}}, postaction={decorate}] (-0.3,-0.3)  to [out=180,in=180]   (-0.3,0.3);
   \draw[dashed] (-0.3,0.3)  to [out=0,in=0]   (-0.3,-0.3);
  \draw[decoration={markings, mark=at position 0.7 with {\arrow{>}}}, postaction={decorate}] (-0.3,-0.3) to [out=-90,in=-90] (0.3,-0.3);
   \draw[dashed] (0.3,-0.3)  to [out=90,in=-90]   (-0.3,-0.3);
   \draw[decoration={markings, mark=at position 0.5 with {\arrow{>}}}, postaction={decorate}] (-0.3,-0.3) to [out= -45,in=-45] (0.3,0.3);
   \draw[decoration={markings, mark=at position 0.5 with {\arrow{>}}}, postaction={decorate}] (-3,-0.8) -- (-0.3,-0.3);
\end{tikzpicture}
\end{center}
\caption{This figure shows the discriminant locus in the affine chart $\mathrm{Spec}\,\bC[z_1'',z_2'']$. }
\label{figthirdchart}
\end{figure}
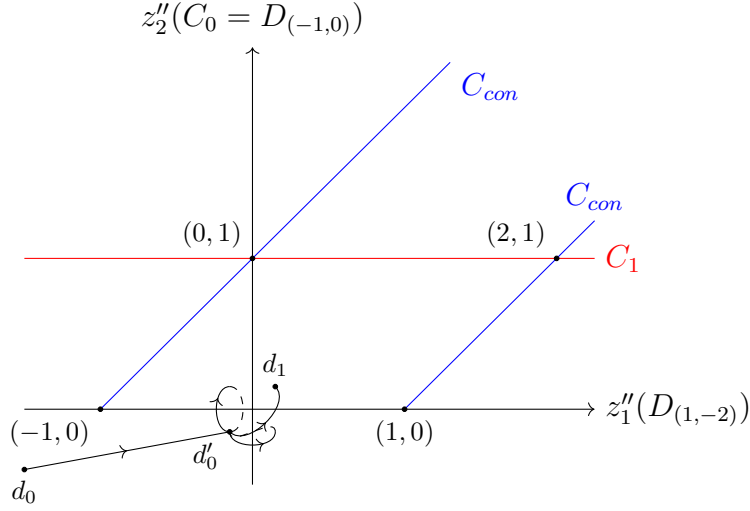

The two nilpotent cones we need to calculate in this affine chart are those associated to the blowup of the origin. We choose the base point $d_0=(-1,-1/10)$. This is $(1/10,100)$ in $\mathrm{Spec}\,\bC[z_1,z_2]$. Then we chose a path connecting $d_0$ to $b_0$.
The choice of the loop is drawn in Figure \ref{figthirdchart}, where the half loop represents a full loop in $B$. The numerical data is not so important in this case, since as it turns out, the monodromy around $C_0$ is the 4-th root of unity, which becomes trivial after passing to some finite cover of $B$.

For the last affine chart, we choose
$\mathrm{Spec}\,\bC[z_1''',z_2''']$
\[
\begin{split}
    \mathrm{Spec}\,\bC[z_1''',z_2'''] &\rightarrow \bar B\\
    (z_1''',z_2''')&\rightarrow [z_1''':1:1:z_2'''].
\end{split}
\] 
Similar to the last affine chart, this is also a double cover. The transition map on the intersection  $\mathrm{Spec}\,\bC[z_1''',z_2''']\cap\mathrm{Spec}\,\bC[z_1,z_2]$ is given by $(z_1''',z_2''') = \pm(z_1^{-1}z_2^{-1/2},z_2^{-1/2})$. The discriminant locus in the upper half plane is given in Figure \ref{figfourthchart}.

\begin{figure}
\begin{center}
\begin{tikzpicture}
  \draw[->] (-3, 0) -- (4.5, 0) node[right] {$z'''_1 (D_{(1,-2)})$};
  \draw[->] (0, -1) -- (0, 4.8) node[above] {$z_2''' (D_{(1,0)})$};
  \draw[scale=1, domain= 1.5:4.0, smooth, variable=\x, blue] plot ({\x}, {2+4/\x}) node[right] {$C_{con}$};
  \draw[scale=1, domain= 0.6:2, smooth, variable=\x, blue] plot ({\x}, {-2+4/\x})
  node[above right] {$C_{con}$};
  \draw[scale=1, domain= -2:-3, smooth, variable=\x, blue] plot ({\x}, {2+4/\x})
  node[left] {$C_{con}$};
  \draw[scale=1, domain=-3:4.5, smooth, variable=\x, red] plot ({\x}, {2}) node[right] {$C_{1}$};
   \draw [fill] (0,2) circle [radius=0.03];
 \node [above left] at (0,2) {\footnotesize{$(0,1)$}};
 \draw [fill] (-2,0) circle [radius=0.03];
 \node [below left] at (-2,0) {\footnotesize{$(-1,0)$}};
    \draw [fill] (1,2) circle [radius=0.03];
 \node [above right] at (1,2) {\footnotesize{$(1/2,1)$}};
\draw [fill] (2,0) circle [radius=0.03];
 \node [below] at (2,0) {\footnotesize{$(1,0)$}};
 \draw [fill] (-0.3,0.3) circle [radius=0.03];
 \node [above] at (-0.3,0.3) {\footnotesize{$e_0'$}};
 \draw [fill] (0.3,-0.3) circle [radius=0.03];
 \node [below] at (0.3,-0.3) {\footnotesize{$e_1$}};
 \draw [fill] (-2,1) circle [radius=0.03];
  \node [below] at (-2,1) {\footnotesize{$e_0$}};
 \draw[decoration={markings, mark=at position 0.7 with {\arrow{>}}}, postaction={decorate}] (-0.3,0.3)  to [out=180,in=180]   (-0.3,-0.3);
    \draw[dashed] (-0.3,-0.3)  to [out=0,in=0]   (-0.3,0.3);
  \draw[decoration={markings, mark=at position 0.7 with {\arrow{>}}}, postaction={decorate}] (-0.3,0.3) to [out=-90,in=-90] (0.3,0.3);
     \draw[dashed] (0.3,0.3)  to [out=90,in=90]   (-0.3,0.3);
   \draw[decoration={markings, mark=at position 0.5 with {\arrow{>}}}, postaction={decorate}] (-0.3,0.3) to [out= -90,in=-180] (0.3,-0.3);
   \draw[decoration={markings, mark=at position 0.5 with {\arrow{>}}}, postaction={decorate}] (-2,1) -- (-0.3,0.3);
\end{tikzpicture}
\end{center}
\caption{This figure shows the discriminant locus in the affine chart $\mathrm{Spec}\,\bC[z_1''',z_2''']$. }
\label{figfourthchart}
\end{figure}
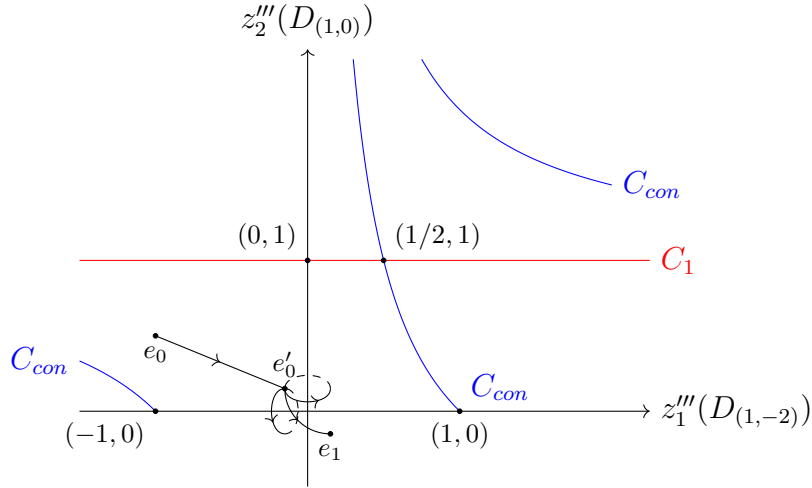

The two nilpotent cones we need to calculate in this affine chart are those between the coordinate axes and the blow-up of the origin. The choice of the loop is drawn in Figure \ref{figfourthchart}, where the point $e_0=(-1,1/2)$ is $(-1/2,4)$ in the affine chart $\mathrm{Spec}\,\bC[z_1,z_2]$. As we will see, the two nilpotent cones in this chart are not interesting since they both degenerate into 1-dimensional cones.

\subsection{Monodromy matrices in integral symplectic basis.}
Now, we want to represent the numerical complex monodromy matrices in the integral symplectic basis introduced in Proposition \ref{prop2.2sec2}. To do so, we need to find the transformation matrix between bases. That is, we would like to calculate the mirror map. However, for our purpose, it is sufficient to transform the basis $\{\omega_j\}$ to any integral symplectic basis. In this section, we will calculate the mirror map from the basis $\{\omega_j\}$ to an integral symplectic basis $\{\tilde\omega_j\}$ that the matrix representation of monodromies at MUM point has the same form as those for the integral symplectic basis in 
Proposition \ref{prop2.2sec2}.

This can be done by an algorithm that is a rigorization of those in \cite{candelas1994mirror1}. We note although the calculation is based on numerical data, the result is precise. That is, the integral matrix we give the reference equation is correct, without error. This is because, for example, given $x$ is an integer, and by numerical calculation, we get $x = 3.991$. If we also know the numerical error is less than $10^{-1}$, then $x$ has to be $4$.

\subsubsection{The mirror map}
To begin with, let $R$ be a $6\times 6$ matrix with  indeterminant  complex entries $R_{ij}$, that corresponds to the transformation matrix from basis $\{\omega_j\}$ to an integral symplectic basis $\{\tilde\omega_j:=R\omega_j\}$. We require $\{\tilde\omega_j\}$ to satisfy an additional condition that the matrix representation of monodromies at MUM point coincide with those for the integral symplectic basis in 
Proposition \ref{prop2.2sec2}. We want to fix $R_{ij}$ by the following procedure.

Since the matrices representations $S_{m_1},S_{m_2}$ in (\ref{mummonnum}) transform into $T_{m_1},T_{m_2}$ under the change of basis, by Proposition \ref{prop2.2sec2}, we have
$$
T_{m_i} = R S_{m_i}R^{-1}\Leftrightarrow T_{m_i}R =RS_{m_i},\quad i=1,2
$$
We can use these equations to fix most of the entries of the matrix $R$. The result is
\begin{equation}\label{Rpartfix}
R = \left[\begin{array}{cccccc}
1 & R_{12} & R_{13} & R_{14} & R_{15} & R_{16} \\
0 & -1 & 0 & 4-\frac{C_{11}}{2}-4R_{13} & -4R_{12}-2 & R_{26}\\
0 & 0 & -1 & -4R_{12}-2 & 0 & 2R_{12}+R_{15}-2 \\
0 & 0 & 0 & 0 & 0 & -1 \\
0 & 0 & 0 & 1 & 0 & -R_{12} \\
0 & 0 & 0 & -2 & 1 & -R_{13}
\end{array}\right],
\end{equation}
where $R_{26} = 2R_{13}+R_{14}+2R_{15}+\frac{C_{11}}{2}R_{12}-\frac{14}{3} $, and we have rescaled the basis $\omega$ by a constant $c_0$ to make $R_{11} = 1$. We note in particular, $\det(R)=-1$, then from the intersection matrix of basis $\{\omega_j\}$ near origin \eqref{Interori}, we know the scaling factor $c_0=\pm 2i\pi^3\cdot c$. Now,
there are only 5 complex ($R_{12}$ to $R_{16}$) and 1 integral ($C_{11}$) indeterminants left. However, these indeterminants are not independent of each other. To see this, we note the intersection matrix for the basis $R\cdot \omega$ is $Q$, and then from it, we can solve the intersection matrix of the basis $\{\omega_j\}$. This implies the intersection matrix with respect to the basis $\{\omega_j\}$ is 
$$
I' =
\left[
\begin{array}{cccccc}
   0  & I'_{12} & 4R_{12}^2-2R_{15}+2 & 0 & 0& -1\\
   -I'_{12} & 0 &0 & -1 & -2 &0\\
    -4R_{12}^2+2R_{15}-2 & 0 & 0 & 0 & -1 & 0 \\
    0 & 1 & 0 & 0 &0 &0\\
    0 & 2 & 1 & 0 & 0 & 0\\
    1 &0 & 0 & 0 & 0 & 0
\end{array}
\right],
$$
where
$$
I'_{12} = -2R_{14}-4R_{15}+8R_{12}R_{13}+8R_{12}^2+\frac{14}{3}.
$$

On the other hand, we have calculated the intersection matrix with respect to the basis $\{\omega_j\}$ in  \eqref{intermat}. Comparing these two expressions, we get $I'_{12} = I'_{13} = 0$, which gives
\begin{equation}\label{constraint45}
\begin{split}
R_{15} & = 2R_{12}^2+1,\\
R_{14} & = 4R_{12}R_{13}+\frac{1}{3}.
\end{split}
\end{equation}

Now, since the integral symplectic basis at which we are aiming is not unique, we would like to see how many degrees of freedom are left. Consider further changing the basis $R\cdot \omega$ by an integral symplectic transformation $U$, i.e., we get a new basis $U\cdot R\cdot \omega$. If the monodromy matrices are still $S_{m_1}$ and $S_{m_2}$ in this new integral symplectic basis, then $U\cdot R$ can only differ from $R$ by changing the indeterminants $R_{1i}$ by $R_{1i}\rightarrow R_{1i}-d_i$, $i=2,\cdots,6$, for some $d_i\in\bC$. Now, if we combine the condition $U$ is integral symplectic and the condition $S_{m_1}$ and $S_{m_2}$ are unchanged, then $U$ has to take the following form
$$
U = \left[\begin{array}{cccccc}
1 & d_2 & d_3 & U_{14} & U_{15} & 2d_2^2+2d_2 \\
0 & 1 & 0 & 2d_3 + \frac{C_{11}d_ 2}{2} - 4d_2d_3 - 4d_2^2 & 8d_2 + 4d_3 & 4d_2\\
0 & 0 & 1 & -2d_2^2+2d_2 & 4d_2 & 0 \\
0 & 0 & 0 & 1 & 0 & 0 \\
0 & 0 & 0 & -d_2 & 1 & 0 \\
0 & 0 & 0 & -d_3 & 0 & 1
\end{array}\right],
$$
where 
\[
\begin{split}
U_{14} & =  d_6-(2d_3 + \frac{14d_2}{3} - d_2R_{14} - 2d_2R_{15} - d_3R_{15} + 4d_2^2R_{12} + 2d_2^2R_{13} + 4d_2d_3R_{12}),\\
U_{15} & = 2d_3 + \frac{C_{11}d_2}{2} + 4d_2d_3 + 4d_2^2. 
\end{split}
\]
In particular, by \eqref{constraint45} or the symplecticity of $U$, one must has
\[
\begin{split}
d_4 & = -4d_2d_3 + 4d_2R_{13} + 4d_3R_{12},\\
d_5 & = 4R_{12}d_2-2d_2^2.\\
\end{split}
\]

Now, since $U$ is integral, we have $d_2,d_2\in\bZ$, and $d_6$ is determined up to an integer $U_{14}$. Therefore, by a suitable integral symplectic transformation of bases  $R$, we can always make $-\frac12\leq\mathrm{Re}(R_{1j})<\frac12$, $j=2,3,6$. These conditions uniquely determine $U$, thus we arrive at the following lemma.
\begin{lemma}
There exists a unique integral symplectic basis of $H^3(X_b,\bZ)$, such that the transformation  matrix from the basis $\{\omega_j\}$ \eqref{basisomega} to it has the form $R$, defined in \eqref{Rpartfix}, with $-\frac12\leq\mathrm{Re}(R_{1j})<\frac12$, $j=2,3,6$.
\end{lemma}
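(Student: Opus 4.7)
The plan is to read off both existence and uniqueness directly from the explicit parameter count already performed above the lemma statement. The setup has reduced any admissible $R$ to a matrix depending on the complex parameters $R_{12},R_{13},R_{14},R_{15},R_{16}$ and the integer $C_{11}$, and the comparison of the two intersection matrices $I$ and $I'$ has eliminated $R_{14}$ and $R_{15}$ in terms of $R_{12},R_{13}$ via \eqref{constraint45}. So the moduli of admissible $R$ with the prescribed MUM monodromy form is parametrized by $(R_{12},R_{13},R_{16},C_{11})\in \mathbb{C}^3\times \mathbb{Z}$ (with $C_{11}$ subject to the congruence implicit in Lemma \ref{lembaourc}).

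First I would argue existence by producing any such $R$. Starting from the numerical parallel-transport data already computed (which provides an approximate change-of-basis matrix from $\{\omega_j\}$ to the integral symplectic basis of Proposition \ref{prop2.2sec2}), rescale so that $R_{11}=1$, then substitute the resulting numerical $R_{12},R_{13}$ into \eqref{constraint45} to obtain $R_{14},R_{15}$; this yields an admissible $R$. Then apply a suitable integer shift of the form $U\cdot R$ described in the text to translate $(\mathrm{Re}\, R_{12},\mathrm{Re}\, R_{13},\mathrm{Re}\, R_{16})$ into the fundamental domain $[-\tfrac12,\tfrac12)^3$.

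For uniqueness, the crucial input is the classification of the residual gauge group already carried out: any two admissible $R$ differ by left multiplication by an integral symplectic $U$ that commutes with $S_{m_1}$ and $S_{m_2}$, and every such $U$ has been shown to be the explicit upper-triangular block with free parameters $d_2,d_3\in\mathbb{Z}$ and $d_6\in\mathbb{Z}$ (the remaining entries $d_4,d_5,U_{14},U_{15}$ being determined by symplecticity). The induced action on the three free complex parameters is precisely
\[
R_{12}\mapsto R_{12}-d_2,\quad R_{13}\mapsto R_{13}-d_3,\quad R_{16}\mapsto R_{16}-d_6'
\]
for integers $d_2,d_3,d_6'$; in particular each orbit intersects the half-open cube $[-\tfrac12,\tfrac12)^3\times \mathbb{R}^3$ (imaginary parts) in a unique point, which gives uniqueness.

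The main subtlety I expect is verifying that the action on $R_{16}$ really is by an integer shift: the $(1,6)$-entry of $U$ is $2d_2^2+2d_2$ and the propagated contribution to $R_{16}$ after composition with $R$ must be checked to be integral modulo the contributions from $U_{14},U_{15}$; this uses the integrality of $C_{11}$ and the constraint \eqref{constraint45}. A secondary bookkeeping point is that $C_{11}$ itself may change by $U$, but the freedom noted after Lemma \ref{lembaourc} (one may shift $C_{11},C_{22}$ by a common integer) absorbs this, so the normalization $-\tfrac12\le \mathrm{Re}(R_{1j})<\tfrac12$ for $j=2,3,6$ is compatible with a consistent choice of $C_{11}$. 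Once these two bookkeeping items are verified, the lemma follows immediately from the orbit/fundamental-domain argument.
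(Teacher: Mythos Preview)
Your approach is essentially the same as the paper's: the lemma is a formal consequence of the residual gauge analysis already carried out, namely that the integral symplectic $U$ commuting with the monodromy representation act on $(R_{12},R_{13},R_{16})$ by independent integer translations (via $d_2,d_3\in\bZ$ and the free integer $U_{14}$ inside $d_6$), so the half-open cube is a fundamental domain.

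One small correction to your existence argument: you invoke ``numerical parallel-transport data already computed'' to produce an initial $R$, but at this point in the paper no such change-of-basis has been computed---that comes \emph{after} the lemma. Existence follows instead directly from Proposition~\ref{prop2.2sec2}, which guarantees an integral symplectic basis with the prescribed MUM monodromies $T_{m_1},T_{m_2}$; the transformation matrix to that basis (after the overall rescaling $R_{11}=1$) is then automatically of the form \eqref{Rpartfix}. Your bookkeeping concerns about the integrality of the shift on $R_{16}$ and the role of $C_{11}$ are legitimate, and the paper handles them exactly as you anticipate: $d_6$ is determined only up to the free integer $U_{14}$, which suffices to normalize $\mathrm{Re}(R_{16})$.
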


Next, we notice, that we have the following conditions that can be used to find the explicit values of $R_{12},R_{13},R_{16}$.
\begin{enumerate}
    \item For each complex numerical monodromy operator $S_{\alpha}$ we calculated in Section \ref{secmmbo}, we have the condition $T_{\alpha}:=RS_{\alpha}R^{-1}$ is an integral  matrix.
    \item $T_{\alpha}$ is symplectic, i.e., $T_{\alpha}'QT_{\alpha}-Q =0$.
    \item $T_{\alpha}$ is quasi-unipotent, i.e., $(T_{\alpha}^m-\mathrm{Id})^n=0$ for some $m,n\in\bZ_{+}$.
    \item For each pair of intersecting normal crossing divisors, their monodromies commute, i.e.,$[T_{\alpha},T_{\beta}]=0$.
\end{enumerate}
In fact, we have a lot more equations than we actually need. More precisely, consider the numerical matrix $S_{con_1}$ in \eqref{con1numma}. Keeping six significant digits, the entries (5,1) and (6,1) of the matrix $RS_{con_1}R^{-1}$ look like
\begin{equation}
    \begin{split}
        -(0.999111 + 0.000257384i)R_{12} - 0.000421127 + 2.3466i, \\
    -(0.999111 + 0.000257384i)R_{13} - 0.00021644 + 1.68483i.
    \end{split}
\end{equation}

Estimating by using \eqref{GMcon1} and \eqref{GMcon2}, one can easily show the numerical values above have an error less than $10^{-3}$. Now, by the integrality of these terms, and the fact that $|\mathrm{Re}(R_{12})|\leq\frac12$ and $|\mathrm{Re}(R_{13})|\leq\frac12$, we have  $\mathrm{Re}(R_{12}) = \mathrm{Re}(R_{13}) = 0$. Therefore, with errors less than $10^{-2}$, we have $$
R_{12} = 2.3466i,\quad R_{13} =  1.6848i.
$$
Substitute this back, the (1,1) term in $RS_{con_1}R^{-1}$ becomes
$$
(0.999111 + 0.000257384i)R_{16} + 1.00762 - 29.3909i,
$$
then since $|\mathrm{Re}(R_{16})|\leq \frac12$, we have $\mathrm{Re}(R_{16}) = 0$, and $R_{16} = 29.3909i$.

Furthermore, we notice that, by a half-integral symplectic transformation
$$
P_1 =\left[\begin{array}{cccccc}1 & 0 & 0 & 0 & 0 & 0 \\0 & 1 & 0 & 0 & \frac{C_{11}}{2} & 0 \\0 & 0 & 1 & 0 & 0 & 0 \\0 & 0 & 0 & 1 & 0 & 0 \\0 & 0 & 0 & 0 & 1 & 0 \\0 & 0 & 0 & 0 & 0 & 1\end{array}\right],
$$
we can cancel $\frac{C_{11}}{2}$ in $R$. In other words, if we allow ourselves to work with a half-integral basis, we can take $C_{11}$ to be any integer. In what follows we will take $C_{11} = 0$. The matrix $R$ now is fully determined. Keeping four significant digits, we have
\[
R = \left[
\begin{array}{rrrrrr}
    1.0 & 2.347i & 1.685i & -15.48 & -10.01 & 29.39i \\
    0 & -1.0 & 0 & 4.0 - 6.739i & - 2.0 - 9.386i & - 40.17 + 3.37i \\
    0 & 0& -1.0 & - 2.0 - 9.386i & 0  & - 12.01 + 4.693i  \\
    0 & 0  & 0 & 0 & 0 & -1.0  \\
    0 & 0  & 0 & 1.0 & 0 & -2.347i  \\
    0 & 0  & 0  & -2.0 & 1.0 & -1.685i
\end{array}\right].
\]

\subsubsection{Integral symplectic monodromy matrices}\label{secintsmm}
Now, using $R$, for any numerical monodromy matrix $S_\alpha$, we can calculate the resulting half-integral symplectic monodromy matrix $T_{\alpha}= RS_{\alpha}R^{-1}$. It turns out all the monodromy matrices are integral instead of half-integral. Moreover, all of these monodromy operators $T_\alpha$ are quasi-unipotent, i.e., they satisfy $(T_\alpha^{m_\alpha}-\mathrm{Id})^{n_\alpha}=0$ for some $m_\alpha,n_\alpha\in\bZ_+$, and $m_\alpha,n_\alpha$ are the smallest among all the possible choices. This is consistent with Deligne's monodromy theorem.

For $T_{C_0}$, we have
$$
T_{C_0} = \left[\begin{array}{rrrrrr}-1 & -1 & 2 & -2 & 8 & -4 \\2 & 3 & -2 & -8 & 4 & 12 \\0 & 0 & 1 & 0 & 4 & 0 \\0 & 0 & 1 & -3 & 6 & 0 \\0 & 0 & 0 & -1 & 1 & 0 \\-1 & -1 & 1 & 2 & 2 & -3\end{array}\right].
$$
As we mentioned at the end of the last section, this is a 4-th root of unity, thus the nilpotent cones formed by $\log(T_{C_0})$ with other operators are 1-dimensional. Similarly, since $T_{D_{(1,-2)}} = \mathrm{Id}_{6\times 6}$, and
$$
T_{D_{(1,-1)}} =  \left[\begin{array}{rrrrrr} 1 & 0 & -1 & 2 & -2 & 0 \\0 & 1 & 0 & 2 & -4 & 4 \\0 & 1 & -1 & -4 & -8 & -2 \\0 & 0 & 0 & 1 & 0 & 0 \\0 & 0 & 0 & 1 & 1 & 1 \\0 & 0 & 0 & -1 & 0 & -1\end{array}\right]
$$
is a square root of $T_{m_1}$, the nilpotent cones generated by $\log(T_{m_1})$, $\log(T_{D_{(1,-1)}})$ and $\log(T_{D_{(1,-1)}})$, $\log(T_{D_{(1,-2)}}^2)$ are both 1-dimensional.

Therefore, only 7 nilpotent cones that are formed by 7 different monodromy operators will be used to construct the candidate nilpotent fan. These  monodromy operators written in the form $T_\alpha^{m_\alpha}$ are

\begin{align*}
&T_{m_1} = 
\left[\begin{array}{rrrrrr}1 & -1 & 0 & 6 & 4 & 0 \\0 & 1 & 0 & -4 & -8 & -4 \\0 & 0 & 1 & -4 & -4 & 0 \\0 & 0 & 0 & 1 & 0 & 0\\0 & 0 & 0 & 1 & 1 & 0\\ 0 & 0 & 0 & 0 & 0 & 1\end{array}\right],\,
&&T_{m_2} = 
\left[\begin{array}{rrrrrr}1 & 0 & -1 & 2 & -2 & 0 \\0 & 1 & 0 & -2 & -4 & 0 \\0 & 0 & 1 & 0 & 0 & 0 \\0 & 0 & 0 & 1 & 0 & 0\\0 & 0 & 0 & 0 & 1 & 0\\ 0 & 0 & 0 & 1 & 0 & 1\end{array}\right],\\
&T_{C_1}^2=
\left[\begin{array}{rrrrrr}1 & 0 & 0 & 0 & 0 & 0 \\0 & 1 & 0 & 0 & 0 & 0 \\0 & 0 & 1 & 0 & 0 & 4 \\0 & 0 & 0 & 1 & 0 & 0\\0 & 0 & 0 & 0 & 1 & 0\\ 0 & 0 & 0 & 0 & 0 & 1\end{array}\right],\,
&&T_{con_1} = 
\left[\begin{array}{rrrrrr}1 & 0 & 0 & 0 & 0 & 0 \\0 & 1 & 0 & 0 & 0 & 0 \\0 & 0 & 1 & 0 & 0 & 0 \\-1 & 0 & 0 & 1 & 0 & 0\\0 & 0 & 0 & 0 & 1 & 0\\ 0 & 0 & 0 & 0 & 0 & 1\end{array}\right],\\
&T_{E_2}^2=
\left[\begin{array}{rrrrrr}1 & 0 & 0 & 0 & 0 & 0 \\0 & 1 & -4 & 0 & -24 & 0 \\0 & 0 & 1 & 0 & 0 & 0 \\0 & 0 & 0 & 1 & 0 & 0\\0 & 0 & 0 & 0 & 1 & 0\\ 0 & 0 & 2 & 0 & -4 & 1\end{array}\right],\,
&&T_{E_0}^4 = 
\left[\begin{array}{rrrrrr}-15 & -8 & 0 & 32 & 0 & -16 \\0 & 1 & 0 & 0 & 0 & 0 \\8 & 4 & 1 & -16 & 0 & 8 \\-8 & -4 & 0 & 17 & 0 & -8\\-4 & -2 & 0 & 8 & 1 & -4\\ 0 & 0 & 0 & 0 & 0 & 1\end{array}\right],
\end{align*}
$$T_{con_2}=
\left[\begin{array}{rrrrrr}-1 & -2 & 2 & 4 & 4 & -8 \\-2 & -1 & 2 & 4 & 4 & -8 \\4 & 4 & -3 & -8 & -8 & 16 \\-1 & -1 & 1 & 3 & 2 & -4\\-1 & -1 & 1 & 2 & 3 & -4\\ 1 & 1 & -1 & -2 & -2 & 5\end{array}\right].
$$

To simplify some of the monodromy matrices, we consider an integral symplectic change of basis induced by the following matrix
$$
P_2 =  
\left[\begin{array}{rrrrrr}1 & 0 & 0 & -2 & 0 & 0 \\0 & -1 & 0 & 0 & 0 & -4 \\0 & 0 & -1 & 0 & -4 & 0 \\0 & 0 & 0 & 1 & 0 & 0\\0 & 0 & 0 & 0 & -1 & 0\\ 0 & 0 & 0 & 0 & 0 & -1\end{array}\right].
$$
with respect to this basis, we have
\begin{align*}
&T_{m_1} = 
\left[\begin{array}{rrrrrr}1 & 1 & 0 & 6 & -4 & -4 \\0 & 1 & 0 & 4 & -8 & -4 \\0 & 0 & 1 & 0 & -4 & 0 \\0 & 0 & 0 & 1 & 0 & 0\\0 & 0 & 0 & -1 & 1 & 0\\ 0 & 0 & 0 & 0 & 0 & 1\end{array}\right],\,
&&T_{m_2} = 
\left[\begin{array}{rrrrrr}1 & 0 & 1 & 2 & -2 & 0 \\0 & 1 & 0 & -2 & -4 & 0 \\0 & 0 & 1 & 0 & 0 & 0 \\0 & 0 & 0 & 1 & 0 & 0\\0 & 0 & 0 & 0 & 1 & 0\\ 0 & 0 & 0 & -1 & 0 & 1\end{array}\right],\\
&T_{C_1}^2=
\left[\begin{array}{rrrrrr}1 & 0 & 0 & 0 & 0 & 0 \\0 & 1 & 0 & 0 & 0 & 0 \\0 & 0 & 1 & 0 & 0 & 4 \\0 & 0 & 0 & 1 & 0 & 0\\0 & 0 & 0 & 0 & 1 & 0\\ 0 & 0 & 0 & 0 & 0 & 1\end{array}\right],\,
&&T_{con_1} = 
\left[\begin{array}{rrrrrr}3 & 0 & 0 & 4 & 0 & 0 \\0 & 1 & 0 & 0 & 0 & 0 \\0 & 0 & 1 & 0 & 0 & 0 \\-1 & 0 & 0 & -1 & 0 & 0\\0 & 0 & 0 & 0 & 1 & 0\\ 0 & 0 & 0 & 0 & 0 & 1\end{array}\right],\\
&T_{E_2}^2=
\left[\begin{array}{rrrrrr}1 & 0 & 0 & 0 & 0 & 0 \\0 & 1 & 4 & 0 & -24 & 0 \\0 & 0 & 1 & 0 & 0 & 0 \\0 & 0 & 0 & 1 & 0 & 0\\0 & 0 & 0 & 0 & 1 & 0\\ 0 & 0 & 2 & 0 & -4 & 1\end{array}\right],\,
&&T_{E_0}^4 = 
\left[\begin{array}{rrrrrr}1 & 0 & 0 & 0 & 0 & 0 \\0 & 1 & 0 & 0 & 0 & 0 \\8 & -4 & 1 & 0 & 0 & 8 \\-8 & 4 & 0 & 1 & 0 & -8\\4 & -2 & 0 & 0 & 1 & 4\\ 0 & 0 & 0 & 0 & 0 & 1\end{array}\right],
\end{align*}
$$T_{con_2}=
\left[\begin{array}{rrrrrr}1 & 0 & 0 & 0 & 0 & 0 \\-2 & 3 & -2 & 0 & 4 & 0 \\0 & 0 & 1 & 0 & 0 & 0 \\-1 & 1 & -1 & 1 & 2 & 0\\1 & -1 & 1 & 0 & -1 & 0\\ -1 & 1 & -1 & 0 & 2 & 1\end{array}\right].
$$

\begin{remark}\label{ptfe}
    After passing to a finite {\'e}tale cover $U\rightarrow B$, and taking $\varphi: Y \rightarrow \bar B$ to be the normalization of $\bar B$ in the function field $k(U)$, we can assume $\Gamma$ is neat.
    
    The morphism $\varphi$ might be tamely ramified at $\bar B\setminus B$, which means $Y$ locally looks like $y^n-f(x_1,x_2)$, where $f$ is the local defining equation of the boundary divisor, and $\varphi$ can be singular over the normal-crossing points. Locally, over a normal-crossing point, we can write the local defining equation for $Y$ as $y^n-x_1x_2$. As shown in \cite{deng2021extension}, blowing up at $(y,x_1,x_2) = (0,0,0)$ will only subdivide the cone generated by the logarithmic of unipotent monodormy matrices $T_1^{m_1},T_2^{m_2}, m_1,m_2\in\bZ_+$ around $x_1=0$ and $x_2=0$.

    Therefore, we can take the relevant monodromy matrices to be some positive power of themselves so that they are unipotent, as already indicated above.
\end{remark}

\subsection{Nilpotent cones and mixed Hodge structures}

Using the explicit formula for the monodromy operators, we can now form the nilpotent cones that generate our candidate nilpotent fan via the adjoint action.
Since, we have $\log(T_\alpha') = \frac{4}{m_\alpha}\log(T_\alpha^{m_\alpha})$, and the nilpotent cone is generated over $\bQ$, we can use the matrices $\log(T_\alpha^{m_\alpha})$ as the generators of the nilpotent cones.

\subsubsection{Logarithmic Monodromies}

For future reference, we label the logarithmic monodromy operators as $N_1 := \log(T_{m_1})$, $N_2 := \log(T_{m_2})$, $N_3 := \log(T_{C_1}^2)$, $N_4 := \log(T_{con_1})$, $N_5 := \log(T_{E_2}^2)$, $N_6 := \log(T_{E_0}^4)$, $N_7 := \log(T_{con_2})$. The explicit forms are as follows
\begin{align*}
&N_1 = 
\left[\begin{array}{rrrrrr}0 & 1 & 0 & 14/3 & 0 & -2 \\0 & 0 & 0 & 0 & -8 & -4 \\0 & 0 & 0 & -2 & -4 & 0 \\0 & 0 & 0 & 0 & 0 & 0\\0 & 0 & 0 & -1 & 0 & 0\\ 0 & 0 & 0 & 0 & 0 & 0\end{array}\right],\,
&&N_2 = 
\left[\begin{array}{rrrrrr}0 & 0 & 1 & 2 & -2 & 0 \\0 & 0 & 0 & -2 & -4 & 0 \\0 & 0 & 0 & 0 & 0 & 0 \\0 & 0 & 0 & 0 & 0 & 0\\0 & 0 & 0 & 0 & 0 & 0\\ 0 & 0 & 0 & -1 & 0 & 0\end{array}\right],\\
& N_3=
\left[\begin{array}{rrrrrr}0 & 0 & 0 & 0 & 0 & 0 \\0 & 0 & 0 & 0 & 0 & 0 \\0 & 0 & 0 & 0 & 0 & 4 \\0 & 0 & 0 & 0 & 0 & 0\\0 & 0 & 0 & 0 & 0 & 0\\ 0 & 0 & 0 & 0 & 0 & 0\end{array}\right],\,
&& N_4 = 
\left[\begin{array}{rrrrrr}2 & 0 & 0 & 4 & 0 & 0 \\0 & 0 & 0 & 0 & 0 & 0 \\0 & 0 & 0 & 0 & 0 & 0 \\-1 & 0 & 0 & -2 & 0 & 0\\0 & 0 & 0 & 0 & 0 & 0\\ 0 & 0 & 0 & 0 & 0 & 0\end{array}\right],\\
&N_5=
\left[\begin{array}{rrrrrr}0 & 0 & 0 & 0 & 0 & 0 \\0 & 0 & 4 & 0 & -24 & 0 \\0 & 0 & 0 & 0 & 0 & 0 \\0 & 0 & 0 & 0 & 0 & 0\\0 & 0 & 0 & 0 & 0 & 0\\ 0 & 0 & 2 & 0 & -4 & 0\end{array}\right],\,
&& N_6 = 
\left[\begin{array}{rrrrrr}0 & 0 & 0 & 0 & 0 & 0 \\0 & 0 & 0 & 0 & 0 & 0 \\8 & -4 & 0 & 0 & 0 & 8 \\-8 & 4 & 0 & 0 & 0 & -8\\4 & -2 & 0 & 0 & 0 & 4\\ 0 & 0 & 0 & 0 & 0 & 0\end{array}\right],
\end{align*}

$$
N_7=
\left[\begin{array}{rrrrrr}0 & 0 & 0 & 0 & 0 & 0 \\-2 & 2 & -2 & 0 & 4 & 0 \\0 & 0 & 0 & 0 & 0 & 0 \\-1 & 1 & -1 & 0 & 2 & 0\\1 & -1 & 1 & 0 & -2 & 0\\ -1 & 1 & -1 & 0 & 2 & 0\end{array}\right].
$$

\subsubsection{Nilpotent cones and types}\label{secnilcone}

For $\sigma_{ij} = \braket{N_i|N_{ij}|N_j}$, we denote the nilpotent cone generated by $N_i$ and $N_j$, and $\sigma_{ij} = \{sN_i + tN_j:s,t\in \bQ_{\geq0}\}$, and $N_{ij}=\{sN_i + tN_j:s,t\in\bQ_{+}\}$ is the interior of $\sigma_{ij}$. The notation $\braket{N_i|N_{ij}|N_j} \xlongequal{\mathrm{Type}} \braket{\mathrm{Type}_i|\mathrm{Type}_{ij}|\mathrm{Type}_j}$ denotes the type of the mixed Hodge structure corresponding to each nilpotent operator. For the definition of the types, we refer to \cite{kerr2017polarized} (especially, Example 1.18). The seven nilpotent cones in question are the following.

\begin{enumerate}
    \item $\sigma_{12} = \braket{N_1|N_{12}|N_2} \xlongequal{\mathrm{Type}}  
    \braket{IV_2|IV_2|II_1}$;
    \item $\sigma_{13} =\braket{N_1|N_{13}|N_3}\xlongequal{\mathrm{Type}} 
    \braket{IV_2|IV_2|I_1}$;
    \item $\sigma_{34} = \braket{N_3|N_{34}|N_4} \xlongequal{\mathrm{Type}}  
    \braket{I_1|I_2|I_1}$;
    \item $\sigma_{45} =\braket{N_4|N_{45}|N_5}\xlongequal{\mathrm{Type}} 
    \braket{I_1|II_1|II_0}$;
    \item $\sigma_{52} =\braket{N_5|N_{52}|N_2} \xlongequal{\mathrm{Type}} 
    \braket{II_0|II_1|II_1}$;
    \item $\sigma_{63} = \braket{N_6|N_{63}|N_3} \xlongequal{\mathrm{Type}} 
    \braket{I_1|I_2|I_1}$;
    \item $\sigma_{67} =\braket{N_6|N_{67}|N_7} \xlongequal{\mathrm{Type}}
    \braket{I_1|I_2|I_1}$.
\end{enumerate}

The type of the nilpotent operators is determined as follows. For a nilpotent operator $N$, we first calculate the Jordan normal form. Since the Hodge structure is of type $(1,2,2,1)$, the mixed Hodge type is uniquely determined when the Jordan form has one 2-dimensional block and four 1-dimensional blocks. This is the type $I_1$ mixed Hodge structure. The operators $N_3,N_4,N_6,N_7$ are of type $I_1$.

Next, if the Jordan form consists of a 4-dimensional block and a 2-dimensional block, then the mixed Hodge type is $IV_2$. Both $N_1,N_{12}$ are of type $IV_2$. This coincides with Lemma \ref{lemHT}, since $\sigma_{12}$ is the cone at the MUM point. The Hodge diamonds for type $I_1$ and $IV_2$ are as follows.

\begin{center}
\begin{tikzpicture}
 \node [left] at (-0.5,3.2) {$I_1$};
 \draw [<->] (0,3.75) -- (0,0) -- (3.75,0);
 \draw [gray] (1,0) -- (1,3);
 \draw [gray] (2,0) -- (2,3);
 \draw [gray] (3,0) -- (3,3);
 \draw [gray] (0,1) -- (3,1);
 \draw [gray] (0,2) -- (3,2);
 \draw [gray] (0,3) -- (3,3);
 \draw [fill] (0,3) circle [radius=0.08];
 \node [above right] at (0,3) {\footnotesize{$1$}};
 %
 \draw [fill] (1,2) circle [radius=0.08];
 \node [above right] at (1,2) {\scriptsize{$1$}};
 %
  \draw [fill] (2,2) circle [radius=0.08];
 \node [above right] at (2,2) {\scriptsize{$1$}};
  \draw [fill] (1,1) circle [radius=0.08];
 \node [above right] at (1,1) {\scriptsize{$1$}};
 \draw [fill] (2,1) circle [radius=0.08];
 \node [above right] at (2,1) {\scriptsize{$1$}};
 %
 \draw [fill] (3,0) circle [radius=0.08];
 \node [above right] at (3,0) {\scriptsize{$1$}};
 
 \node [left] at (5.5,3.2) {$IV_2$};
 \draw [<->] (6,3.75) -- (6,0) -- (9.75,0);
 \draw [gray] (7,0) -- (7,3);
 \draw [gray] (8,0) -- (8,3);
 \draw [gray] (9,0) -- (9,3);
 \draw [gray] (6,1) -- (9,1);
 \draw [gray] (6,2) -- (9,2);
 \draw [gray] (6,3) -- (9,3);
 \draw [fill] (6,0) circle [radius=0.08];
 \node [above right] at (6,0) {\footnotesize{$1$}};
 %
 \draw [fill] (7,1) circle [radius=0.08];
 \node [above right] at (7,1) {\scriptsize{$2$}};
 %
 \draw [fill] (8,2) circle [radius=0.08];
 \node [above right] at (8,2) {\scriptsize{$2$}};
 %
 \draw [fill] (9,3) circle [radius=0.08];
 \node [above right] at (9,3) {\scriptsize{$1$}};
\end{tikzpicture}
\end{center}

If the Jordan normal form has two 2-dimensional blocks and two 1-dimensional blocks, then the mixed Hodge structure that could associate to it is either $I_2$ or type $II_0$, with the following Hodge diamonds.

\begin{center}
\begin{tikzpicture}
 \node [left] at (-0.5,3.2) {$I_2$};
 \draw [<->] (0,3.75) -- (0,0) -- (3.75,0);
 \draw [gray] (1,0) -- (1,3);
 \draw [gray] (2,0) -- (2,3);
 \draw [gray] (3,0) -- (3,3);
 \draw [gray] (0,1) -- (3,1);
 \draw [gray] (0,2) -- (3,2);
 \draw [gray] (0,3) -- (3,3);
 \draw [fill] (0,3) circle [radius=0.08];
 \node [above right] at (0,3) {\footnotesize{$1$}};
  \draw [fill] (2,2) circle [radius=0.08];
 \node [above right] at (2,2) {\scriptsize{$2$}};
  \draw [fill] (1,1) circle [radius=0.08];
 \node [above right] at (1,1) {\scriptsize{$2$}};

 \draw [fill] (3,0) circle [radius=0.08];
 \node [above right] at (3,0) {\scriptsize{$1$}};

 \node [left] at (5.5,3.2) {$II_0$};
 \draw [<->] (6,3.75) -- (6,0) -- (9.75,0);
 \draw [gray] (7,0) -- (7,3);
 \draw [gray] (8,0) -- (8,3);
 \draw [gray] (9,0) -- (9,3);
 \draw [gray] (6,1) -- (9,1);
 \draw [gray] (6,2) -- (9,2);
 \draw [gray] (6,3) -- (9,3);
 \draw [fill] (6,2) circle [radius=0.08];
 \node [above right] at (6,2) {\scriptsize{$1$}};
 \draw [fill] (7,3) circle [radius=0.08];
 \node [above right] at (7,3) {\scriptsize{$1$}};
 \draw [fill] (7,2) circle [radius=0.08];
 \node [above right] at (7,2) {\scriptsize{$1$}};
 %
 \draw [fill] (8,1) circle [radius=0.08];
 \node [above right] at (8,1) {\scriptsize{$1$}};
 %
 \draw [fill] (8,0) circle [radius=0.08];
 \node [above right] at (8,0) {\scriptsize{$1$}};
 \draw [fill] (9,1) circle [radius=0.08];
 \node [above right] at (9,1) {\scriptsize{$1$}};
\end{tikzpicture}
\end{center}

In order to distinguish one from the other, we will make use of polarized relations.
These encode how the mixed Hodge type could degenerate. For example, if we have a nilpotent cone that has Hodge type $\braket{\mathrm{Type}_1|\mathrm{Type}_{12}|\mathrm{Type}_2}$, then one has polarized relations $\mathrm{Type}_1\prec \mathrm{Type}_{12}$ and $\mathrm{Type}_2\prec\mathrm{Type}_{12}$. 
Example 1.18 in \cite{kerr2017polarized} works out all the possible polarized relations for Hodge type $(1,2,2,1)$. From there we see the $II_0\prec II_1$ but $I_2\not\prec II_1$. Similarly, we have $I_1\prec I_2$ but $I_1\not\prec II_0$. These are enough to determine the mixed Hodge types for all the nilpotent cones in our list.


\section{Main result}\label{Secinint}

As we discussed in the roadmap, the candidate fan we need for the Kato-Usui's completion of the period map is the $\Gamma$-adjoint orbit of the union of the seven nilpotent cones listed in Section \ref{secnilcone}, where $\Gamma$ is the global monodromy group. However, this object is indeed a fan if and only if there are no interior intersections among the cones in the orbit. In this section, we will show there are no infinite interior intersections after we lift the period map to a finite cover. More precisely, we will show the following stronger statement.

\begin{lemma}\label{lem41}
After lifting the family to a finite cover, the $Sp(6,\bZ)$-orbit of the union of the (lift of the) 7 cones in Section \ref{secnilcone} has no interior intersection. 
\end{lemma}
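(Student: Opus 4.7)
The plan is to exploit two invariants attached to the relative interior of a nilpotent cone --- the interior mixed Hodge type (in the sense of \cite{kerr2017polarized}) and the monodromy weight filtration $W_\bullet$ --- both of which are constant on the relative interior of the cone and transform equivariantly under the $Sp(6,\bZ)$-action. Consequently, if $g\sigma$ and $\sigma'$ share a common relative interior point $N$, then $g W_\bullet(\sigma) g^{-1} = W_\bullet(N) = W_\bullet(\sigma')$ and the interior types of $\sigma$ and $\sigma'$ must agree. This observation reduces the combinatorics to pairs $(\sigma,\sigma')$ of cones with matching interior type; every cross-type pair contributes no interior intersection under any $g \in Sp(6,\bZ)$.

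First, using the data of Section \ref{secnilcone}, partition the seven rank-two cones by interior type: the type $IV_2$ class $\{\sigma_{12},\sigma_{13}\}$, the type $II_1$ class $\{\sigma_{45},\sigma_{52}\}$, and the type $I_2$ class $\{\sigma_{34},\sigma_{63},\sigma_{67}\}$. For each class I would compute the monodromy weight filtration explicitly from a generic interior element (for example $N_1+N_2$ for $\sigma_{12}$) and identify the parabolic stabilizer $P_\sigma = \mathrm{Stab}_{Sp(6,\bZ)}(W_\bullet(\sigma))$. Only six same-class pairs (including diagonal pairs) remain to analyze.

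Next, for each same-class pair $(\sigma,\sigma')$, the equation $g W_\bullet(\sigma) g^{-1} = W_\bullet(\sigma')$ cuts out a single coset $g_0 P_\sigma$ inside $Sp(6,\bZ)$. Inside this coset I would narrow $g$ further using finer invariants of the limiting mixed Hodge structure --- the induced polarized pure Hodge structures on $\mathrm{Gr}^W$, the Deligne bigrading $I^{p,q}$, and the integral lattice spanned by the cone generators inside $\fsp(6,\bZ)$. For each surviving $g$, check directly whether $g\sigma = \sigma'$ as cones; these are the harmless cases. Any remaining $g$ with $g\sigma \neq \sigma'$ but $g\sigma \cap \sigma'$ having nontrivial interior should be eliminated by passing to a deeper congruence subgroup $\Gamma_m \subset Sp(6,\bZ)$ for a suitable $m$, which is precisely the finite cover of $B$ alluded to in the statement (cf.\ step (6) of Section \ref{olapp}).

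The main obstacle is the type $I_2$ class: it contains three cones and its weight filtration has a two-dimensional graded piece, so the parabolic stabilizer $P_\sigma \cap Sp(6,\bZ)$ is relatively large and one loses most of the rigidity available in the $IV_2$ and $II_1$ cases. The crux is to analyze the action of $P_\sigma \cap Sp(6,\bZ)$ on the two-dimensional subcone of $\fsp(6,\bQ)$ consisting of nilpotent endomorphisms compatible with $W_\bullet$, and to verify that the three cones $\sigma_{34},\sigma_{63},\sigma_{67}$ lie in three mutually disjoint $Sp(6,\bZ)$-orbits, each of which is itself rigid in the sense that every $g\in Sp(6,\bZ)$ producing an interior overlap either fixes the cone on the nose or is killed by integrality. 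By contrast, the $IV_2$ case should be handled quickly, since $W_\bullet$ is a complete isotropic flag whose $Sp(6,\bZ)$-stabilizer is a Borel-type subgroup with very limited action on the interior of a rank-two cone; the $II_1$ case is intermediate and proceeds along the same lines.
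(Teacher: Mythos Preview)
Your overall framework --- partition the seven cones by interior LMHS type, use that any interior intersection forces $gW_\bullet(\sigma)g^{-1}=W_\bullet(\sigma')$, and reduce $g$ via a Levi decomposition of the parabolic stabilizer --- is exactly the paper's strategy (Section~\ref{secstra}). However, you have the relative difficulty of the three type-classes inverted, and your plan for the $I_2$ class rests on a factually incorrect expectation.

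For type $IV_2$, the weight filtration is \emph{not} a complete isotropic flag: the graded pieces have dimensions $1,2,2,1$, so the Levi quotient of the stabilizing parabolic is not a torus but contains an honest $\GL(2,\bZ)$ factor (see \eqref{levi1213} and \eqref{goodlevi}). This $\GL(2,\bZ)$ acts nontrivially on the cone, and the resulting Diophantine systems \emph{do} admit integral solutions --- the paper eliminates them only by imposing $H\in\Gamma_3$ (for $\sigma_{12}$ against itself, \eqref{gb12}) or $H\in\Gamma_2$ (for $\sigma_{12}$ against $\sigma_{13}$). So the $IV_2$ class is precisely where the finite cover is forced; it is not ``handled quickly.'' By contrast, the $II_1$ pairs are disposed of over all of $\Sp(6,\bZ)$ with no congruence restriction.

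For type $I_2$, your plan is to show that $\sigma_{34},\sigma_{63},\sigma_{67}$ lie in three \emph{mutually disjoint} $\Sp(6,\bZ)$-orbits. This is false: the paper exhibits explicit matrices $G_{34},G_{63},G_{67}\in\Sp(6,\bZ)$ with $\Ad(G_{34})N_{34}=\Ad(G_{63})N_{63}=\Ad(G_{67})N_{67}$ equal to a common standard form $sE_{14}+tE_{25}$. Recognizing that all three cones are $\Sp(6,\bZ)$-conjugate collapses the six pairwise checks to a single self-intersection analysis, which is then short and needs no congruence condition. Your proposed verification of disjointness would therefore simply fail, and the $I_2$ class --- which you flag as the main obstacle --- is in fact the easiest of the three.
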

This statement is stronger than we need since we are showing there is no interior intersection at all and, $\Gamma <Sp(6,\bZ)$. Then we arrive at the following crucial theorem.
\begin{thm4.2}
There is a finite cover $B'\xrightarrow{48:1}B$ such that the nilpotent fan $\Sigma'$ associated to the lifted period map $\Phi':B'\rightarrow \Gamma'\backslash D$ is strongly compatible with the monodromy group.
\end{thm4.2}
\begin{proof}
By the construction of our fan, it is naturally strongly $\Gamma$-compatible as long as it is indeed a fan. 
\end{proof}

Finally, as a corollary, we arrive at the main result of our paper.
\begin{theorem1.1}
There is a finite cover $B'\xrightarrow{}B$ with the property that the lifted period map $\Phi':B'\rightarrow \Gamma'\backslash D$ admits a Kato-Usui type completion. In particular, points at infinity parametrize nilpotent orbits. Moreover, at a generic point of $B$ (or $B'$) the real special Mumford-Tate group is $Sp(6,\bR)$.
\end{theorem1.1}
\begin{proof}
This is an immediate consequence of Theorem 4.2, Theorem \ref{KTtheorem} and Corollary \ref{coMT} after we lift the family to a cover such that all the monodromy operators are unipotent.  
\end{proof}

\subsection{Strategy of the proof of Lemma \ref{lem41}}\label{secstra}
The proof is done by direct calculation, following the strategy in \cite{deng2021extension}. First, we note, 
study the $\Sp(6,\bZ)$-orbit in an $n$-th lifting of the family is the same as study the $\Gamma_n$-orbit of the original family, where $\Gamma_n:=\{H\in Sp(6,\bZ): H\equiv \mathrm{Id},\mod n\}$.
Therefore, one only needs to find an appropriate number $n$, such that the $\Gamma_n$-adjoint orbits of nilpotent cones have no interior intersections. We will show that $n=12$ will be sufficient.

Next, recall the adjoint action by $Sp(6,\bZ)$ preserves the type of the LMHS induced by the nilpotent operators (Theorem 3.3 in \cite{cattani1982polarized}). Then by the mixed Hodge type we calculated in Section \ref{secnilcone}, there are 12 possible ways that the nilpotent orbits can intersect one another in interior points.

To be more precise, for any two cones $\sigma,\sigma'$  from the list in Section \ref{secnilcone}, and any $\gamma\in\Gamma_n$, we wish to show $\sigma'\cap \mathrm{Ad}_\gamma\sigma'\neq 0$ unless $\sigma'\cap \mathrm{Ad}_\gamma\sigma'$ is a face of both $\sigma'$ and $ \mathrm{Ad}_\gamma\sigma'$. Using the mixed Hodge type reduces the relevant number of cases to 9, which we will discuss separately.

\subsubsection*{Convention}

We will denote the spanning parameters of the two cones in question as $s_1,t_1$ and $s_2,t_2$, respectively. Moreover, if the two cones that are under consideration are identical, we will denote them as $\sigma_{ij} = \braket{N_i|N_{ij}|N_j}$ and $\sigma'_{ij} = \braket{N'_i|N'_{ij}|N'_j}$. Note in this case, the two nilpotent cones do intersect trivially, i.e., when two cones are identical. So when we say interior intersection, we are excluding this case.

All the calculations in this section are done with the help of Matlab\textsuperscript \textregistered.

\subsection{Type $IV_2$} There are two cones whose interior has this type, they are $\sigma_{12}$ and $\sigma_{13}$, thus there are 3 possible ways, their adjoint orbit can intersect.

\subsubsection{$\Ad(\Gamma)\cdot\sigma_{12}\cap \sigma_{12}'$}\label{sec1212}
By the alignment of the type of LMHS, without losing generality, we can assume the possible non-trivial interior intersections are in $\Ad(\Gamma)\cdot N_{12} \cap N'_{12}$, and at $s_{1} > 0, t_{1}>0, s_{2}> 0, t_{2}\geq0$ with $(s_1,t_1)\neq (s_2,t_2)$. Here we have slightly abused the notation to allow the parameter in $N_{12}'$ to take value 0, this will be a common practice for the rest of this section. Now, we will show $\gamma \sigma_{12}$ and $\sigma_{12'}$ have no non-trivial interior intersection, $\forall \gamma\in \Gamma_3\cap\Sp(6,\bZ)$.

The Jacobson-Morosov weight filtrations induced by $N_{12}$ and $N_{1}$ are both 
\begin{center}
    \begin{tikzcd}
  W_{-3}=W_{-2}\arrow[hookrightarrow]{r}\arrow[equal]{d} 
  & W_{-1}=W_{0}\arrow[hookrightarrow]{r}\arrow[equal]{d}
  & W_{1}=W_{2}\arrow[hookrightarrow]{r}\arrow[equal]{d} 
  & W_{3} \arrow[equal]{d} \\
  \braket{e_1}
  & \braket{e_1,e_2,e_3}
  & \braket{e_1,e_2,e_3,e_5,e_6}
  & V 
    \end{tikzcd}.
\end{center}

Now, we transform the basis $\{e_i\}_{i=1}^6$ symplectically to a basis $\{e_i'\}_{i=1}^6$ that is adapted to the associated graded module, that is
\begin{equation}\label{adpdef}
Gr^{N_{12}}V \simeq
\braket{e_1}\oplus\braket{e_2,e_3}\oplus\braket{e_4,e_5}\oplus\braket{e_6}.
\end{equation}
More precisely, in the new basis, we want $N_{12}\cdot e_i'\cap W_{k-3}= \emptyset$, if $e_i'\in W_k$. One such choice is
\[
\begin{split}
    &e_1' = e_1,\quad e_2' = e_2+e_3,\quad e_3' = -e_2,\quad e_4' = -\frac73 e_2-e_3+e_4,\\
    &e_5' = -e_1+2e_2+e_6,\quad e_6' = \frac43e_1+2e_2 -2e_3-e_5+e_6.
\end{split}
\]
Put it in the form of Hodge diamond, we get Figure \ref{HD1213}.

\begin{figure}
\begin{center}
\begin{tikzpicture}
 \node [left] at (-0.5,3.2) {$N_{12}(N_{13})$};
 \draw [<->] (0,3.75) -- (0,0) -- (3.75,0);
 \draw [gray] (1,0) -- (1,3);
 \draw [gray] (2,0) -- (2,3);
 \draw [gray] (3,0) -- (3,3);
 \draw [gray] (0,1) -- (3,1);
 \draw [gray] (0,2) -- (3,2);
 \draw [gray] (0,3) -- (3,3);
 \draw [fill] (0,0) circle [radius=0.08];
 \node [above right] at (0,0) {\footnotesize{$1$}};
 \node [below right] at (0,0) {\footnotesize{$e_1'$}};
 \draw [fill] (1,1) circle [radius=0.08];
 \node [above right] at (1,1) {\scriptsize{$2$}};
 \node [below right] at (1,1) {\footnotesize{$e_2',e_3'$}};
 \draw [fill] (2,2) circle [radius=0.08];
 \node [above right] at (2,2) {\scriptsize{$2$}};
 \node [below right] at (2,2) {\footnotesize{$e_5',e_6'$}};
 \draw [fill] (3,3) circle [radius=0.08];
 \node [above right] at (3,3) {\scriptsize{$1$}};
 \node [below right] at (3,3) {\scriptsize{$e_4'$}};
\end{tikzpicture}
\caption{The Hodge diamond for the case of $N_{12}$ and $N_{13}$.}
\label{HD1213}
\end{center}
\end{figure}
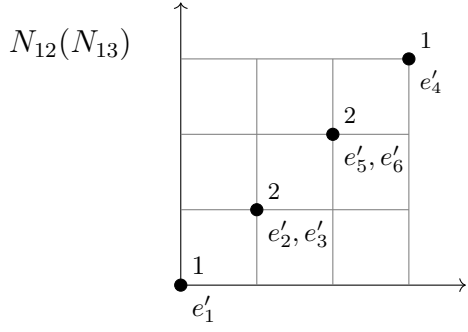
Denote the change of basis matrix as $T_B$, and 
\begin{equation}\label{1213G}
G = (T_B^T)^{-1} = \left[\begin{array}{rrrrrr} 1 & 0 & 0 & 0 & \frac73 & 1 \\0 & 0 & 1 & 1 & -2 & 0 \\0 & -1 & 1 & -\frac43 & -2 & 2 \\0 & 0 & 0 & 1 & 0 & 0\\0 & 0 & 0 & 0 & 1 & 1\\ 0 & 0 & 0 & 0 & -1 & 0\end{array}\right],
\end{equation}
then we have
\begin{equation}\label{1212N1}
    \Ad(G)\cdot N_{12} = \left[\begin{array}{rrrrrr} 0 & s_1+t_1 & -s_1 & 0 & 0 & 0  \\0 & 0 & 0 & 0 & 0 & 4s_1 \\0 & 0 & 0 & 0 & 4s_1 & -4t_1 \\0 & 0 & 0 & 0 & 0 & 0\\0 & 0 & 0 & -s_1-t_1 & 0 & 0\\ 0 & 0 & 0 & s_1 & 0 & 0\end{array}\right],
\end{equation}
and $\Ad(G)\cdot N_{12}'$ corresponding to the replacement $(s_1,t_1)\rightarrow (s_2,t_2)$.

Now, assume there exists an interior intersection, that is $ H\in\Sp(6,\bZ)$, $\Ad(H)\cdot N_{12} \cap N_{12}'\neq\emptyset$ which is equivalent to $\Ad(G HG^{-1})\cdot \Ad(G) \cdot N_{12}\cap \Ad(G)\cdot N_{12}'\neq\emptyset $. Since $\Ad(G)\cdot N_{12}$ and $\Ad(G)\cdot N_{12}'$ induce the same weight filtration, $G H G^{-1}$ must be parabolic an element of the parabolic group stabilizing the weight filtration. Moreover, the basis $\{e_i'\}_{i=1}^6$ is adapted to both $Gr^{\Ad(G)\cdot N_{12}}V$ and $Gr^{\Ad(G)\cdot N_{12}}V$ in the sense of \eqref{adpdef}. Consider a Levi decomposition of $G H G^{-1} = LU$, where $L$ and $U$ are the Levi part and unipotent part respectively. The unipotent part $U$ preserves the graded quotient, i.e., $\Ad(U)\cdot (N_{12}W_k)\cap W_{k-2}=(N_{12}W_k)\cap W_{k-2}$.  
Therefore, we can take $G H G^{-1}$ to be of the following form
\begin{equation}\label{levi1213}
G H G^{-1} = \left[\begin{array}{rrrrrr}k_1 & 0 & 0 & 0\\0 & A & 0 & 0  \\0 & 0 & k_2 & 0 \\ 0 & 0 & 0 & B\end{array}\right],\quad k_{1},k_2\in\bQ^\times, A,B\in\mathrm{GL}(2,\bQ),
\end{equation}
with $k_1k_2=1,A^TB=\mathrm{Id}$. Now we want to solve the equations \begin{equation}\label{Sequat}
S:=GHG^{-1} \cdot (\Ad(G) \cdot N_{12}) -  (\Ad(G) \cdot N_{12}')\cdot GHG^{-1}=0_{6\times 6}.  
\end{equation}
First, we reduce the degree of freedom in (\ref{levi1213}) by using the fact that $H\in\Sp(6,\bZ)$. Starting with a general element $H = (h_{ij})_{i,j = 1,\cdots,6}\in\mathrm{GL}(6,\bZ)$, we can reduce the degree of freedom of $H$ by using $GHG^{-1}$ is block diagonal in the form (\ref{levi1213}), which gives
\begin{equation}\label{ABform}
\begin{split}
&k_1 = h_{11},\quad A =  \left[\begin{array}{cc}h_{32}+h_{44} & -h_{32}\\ h_{32}-h_{23}-h_{22}+h_{44}& h_{22}-h_{32}\end{array}\right],\\
&k_2 = h_{44},\quad  B = \left[\begin{array}{cc}h_{56}+h_{66} & h_{56}-h_{55}-h_{65}+h_{66}\\ -h_{56}& h_{55}-h_{56}\end{array}\right],
\end{split}
\end{equation}
Since $h_{ij}\in\bZ$, we have $k_1 = k_2 = \pm1$. On the other hand, $\det(A),\det(B)\in\bZ$, gives $\det(A),\det(B)=\pm1$. Then $G H G^{-1}$ can be taken to be the following form
    
\begin{equation}\label{goodlevi}
    G H G^{-1} = \left[\begin{array}{rrrrrr}(-1)^n & 0 & 0 & 0 & 0 & 0 \\0 & a & b & 0 & 0 & 0 \\0 & c & d & 0 & 0 & 0 \\0 & 0 & 0 & (-1)^n & 0 & 0\\0 & 0 & 0 & 0 & (-1)^{1+\sdet(A)}d & (-1)^{\sdet(A)}c\\ 0 & 0 & 0 & 0 & (-1)^{\sdet(A)}b  & (-1)^{1+\sdet(A)}a\end{array}\right],
\end{equation}
where $\det(A) = ad-bc = \pm1 =:(-1)^{1+\sdet(A)}$ or $\sdet: = \frac{1+\det(A)}{2}$. Substitute these into (\ref{Sequat}), we arrive at the following equations
\[
\begin{split}
    &S_{12} = (-1)^n(s_1 + t_1) - a(s_2 + t_2) + cs_{2} = 0,\\
    &S_{13} = ds_2 - b(s_2 + t_2) + (-1)^{1+n}s_1 = 0,\\
    &S_{25} = 4b(s_1 + (-1)^{1+\sdet(A)}s_2) =0 ,\\
    &S_{26} = 4as_1 - 4bt_1 + (-1)^{\sdet(A)}4as_2=0,\\ 
    &S_{35} = 4ds_1 + (-1)^{\sdet(A)}4bt_2 + (-1)^{\sdet(A)}4ds_2=0,\\
    &S_{36} = 4cs_1 - 4dt_1 + (-1)^{1+\sdet(A)}4at_2 + (-1)^{1+\sdet(A)}4cs_2 = 0,\\
    &S_{54} = (-1)^n(s_2 + t_2) + (-1)^{\sdet(A)}d(s_1 + t_1) + (-1)^{\sdet(A)}cs_1 =0,\\
    &S_{64} = (-1)^{n+1}s_2 + (-1)^{1+\sdet(A)}b(s_1 + t_1) + (-1)^{1+\sdet(A)}as_1 =0.
\end{split}
\]
If $b=0$, then the above equation has solution $a=d = (-1)^n$, $c=0,t_1 = t_2,s_1 = s_2$, which corresponds to a trivial interior intersection. Now, if $b\neq 0$, by $s_{1}>0$ from $S_{25} = 0$, we have $s_1 = s_2$ and $\det(A) = -1$. Eliminating $s_{1},t_{1},t_{2}$ we arrive at 
\begin{equation}\label{gb12}
\begin{split}
    & 2ad - ab + bc + (-1)^n2a + (-1)^nb =0,\\
    & b - 3d + (-1)^n = 0,\\
    & 2ad + bc + bd + (-1)^nb +(-1)^{n+1} 2d =0 ,\\
    & 3a + b + (-1)^n=0.\\ 
\end{split}
\end{equation}
Simplifying  these equations gives $d = -a$, $b = -3a-(-1)^n$, and $c = \frac{-a^2-1}{3a+(-1)^n}$. Now recall from (\ref{ABform}) we have $b = -h_{32}$. Since $H\in \Gamma_3$, we have $h_{32}\equiv0\mod 3$ implying  $b\equiv 0\mod 3$, which gives the desired contradiction. Therefore, $\Ad(\Gamma_3)\cdot\sigma_{12}$ and $\sigma_{12}'$ has no non-trivial interior intersection.

Now for the case $b = c$ and not equal to 0, substitute $b=c$ back into (\ref{gb12}), we have
$$
P_3=0\Rightarrow 3a + b + (-1)^n=0,\quad P_4 = 0\Rightarrow b - 3d + (-1)^{n+\sdet(A)}=0,
$$
Then from $ad-bc = \det(A) = (-1)^{\sdet(A)}$, we have
$$
-10b^2 -((-1)^n+(-1)^{n+\det(A)})b+8(-1)^{\det(A)}=0.
$$
Since $b\in\bZ$, the discriminant is real only for $\det(A) = -1$. Then
$$
10b^2+8 = 0,
$$
which is impossible. Therefore, we can conclude $\Ad(\Gamma_3\cap\Sp(6,\bZ))\cdot \sigma_{12}$ and $\sigma_{12'}$ has no non-trivial interior intersection.

\subsubsection{$\Ad(\Gamma)\cdot\sigma_{12}\cap \sigma_{13}$}
In this case, the possible non-trivial interior intersections are at $s_{1} > 0, t_{1}\geq0, s_{2}> 0, t_{2}\geq0$ with $(s_1,t_1)\neq (s_2,t_2)$, and $t_1,t_2$ are not simultaneously $0$. We will show  $\Ad(\Gamma_2\cap\Sp(6,\bZ))\cdot \sigma_{12}$ and $\sigma_{12'}$ has no non-trivial interior intersection.

The LMHS with cone $N_{13}$ has Hodge diamond as the LMHS with cone $N_{12}$ (Figure \ref{HD1213}). Furthermore, by direct calculation, one can see the change of basis (\ref{1213G})  yields the following grading adapted matrix representation of the operator
\begin{equation}\label{1213N2}
\mathrm{Ad}(G)\cdot N_{13} = \left[\begin{array}{rrrrrr} 0 & s_2 & -s_2 & 0 & 0 & 0  \\0 & 0 & 0 & 0 & 4t_2 & 4s_2+4t_2 \\0 & 0 & 0 & 0 & 4s_2+4t_2 & 4t_2 \\0 & 0 & 0 & 0 & 0 & 0\\0 & 0 & 0 & -s_2 & 0 & 0\\ 0 & 0 & 0 & s_2 & 0 & 0\end{array}\right],
\end{equation}

Assume there exists an interior intersection. Then using the same argument and same notation as in Section \ref{sec1212}, the unipotent part of a fixed Levi decomposition of $GHG^{-1}$ fixes both (\ref{1212N1}) and (\ref{1213N2}). Therefore, we can take $GHG^{-1}$ to be the Levi part as in (\ref{goodlevi}), and considering the intersection in $\Ad(G HG^{-1})\cdot \Ad(G) \cdot N_{12}\cap \Ad(G)\cdot N_{13}$. Now, as before, we want to solve the equations $S:= GHG^{-1}\cdot (\Ad(G) \cdot N_{12}) -  (\Ad(G)\cdot N_{13})\cdot GHG^{-1}=0_{6\times 6}$. More precisely, we have
\[
\begin{split}
    &S_{12} = (-1)^n(s_1 + t_1) - as_2 + cs_2 = 0,\\
    &S_{13} = ds_2 - bs_2 + (-1)^{1+n}s_1 = 0,\\
    &S_{25} = 4bs_1 +(-1)^{1+\sdet(A)} 4b(s_2 + t_2) + (-1)^{\sdet(A)}4dt_2 =0 ,\\
    &S_{26} = 4as_1 - 4bt_1 + (-1)^{\sdet(A)}4a(s_2 + t_2) +  (-1)^{1+\sdet(A)} 4ct_2=0,\\ 
    &S_{35} = 4ds_1 + (-1)^{\sdet(A)}4d(s_2 + t_2) + (-1)^{1+\sdet(A)}4bt_2 = 0,\\ 
    &S_{36} = 4cs_1 - 4dt_1 + (-1)^{1+\sdet(A)} 4c(s_2 + t_2) + (-1)^{\sdet(A)} 4at_2 = 0,\\
    &S_{54} = (-1)^ns_2 + (-1)^{\sdet(A)}d(s_1 + t_1) + (-1)^{\sdet(A)}cs_1 =0,\\
    &S_{64} = (-1)^{n+1}s_2 + (-1)^{1+\sdet(A)}b(s_1 + t_1) + (-1)^{1+\sdet(A)}as_1 =0.
\end{split}
\]
If $b = 0$, $S_{25} = 0$ $\Rightarrow$ $dt_2 = 0$. Since $\det(A)\neq 0$, $a,d \neq 0$, we have $t_2 = 0$. Then $S_{26} = 4a(s_1+(-1)^{\sdet(A)}s_2) =0$, $s_1,s_2>0$ $\Rightarrow$ $s_1 = s_2$, $\det(A) = 1$. Then $S_{13} = S_{64} = 0$ $\Rightarrow$ $a = d = (-1)^n$. Then $S_{12} = S_{36} = 0$ $\Rightarrow$ $cs_1 =  t_1 = 0$. Since we assumed $t_1$ and $t_2$ are not simultaneously 0, we have reached a contradiction.

Now if $b\neq0$, eliminating  $s_{1},t_{1},t_2$ and using the fact $s_{2}>0$, we have
\begin{equation*}
\begin{split}
    P_1 := & ab^2 - ad^2 - 2bd^2 + b^2d + 2cd^2 + d^3 - 2bcd + (-1)^{n+\sdet(A)}ab \\
    & +(-1)^{1+n+\sdet(A)} cd =0,\\
    P_2 := & b^2 - 2bd + (-1)^{n+\sdet(A)}b + d^2 + (-1)^{n+\sdet(A)}d = 0,\\
    P_3 := & d - b + (-1)^{\sdet(A)}d^3 + ab^2 + (-1)^{\sdet(A)}b^2c +(-1)^{1+\sdet(A)} 2bd^2\\
    & + (-1)^{\sdet(A)}b^2d + (-1)^{\sdet(A)}2cd^2 + (-1)^{n}ab +(-1)^{1+n} cd \\
    & + (-1)^{\sdet(A)}abd +(-1)^{1+\sdet(A)} 3bcd =0 ,\\
    P_4 := & d^2 +(-1)^{1+\sdet(A)} 2b^2d^2 - bd + (-1)^{\sdet(A)}ab^3 + (-1)^{\sdet(A)}ad^3 \\
    & + (-1)^{\sdet(A)}b^3d +(-1)^{1+\sdet(A)} 2abd^2 + (-1)^{\sdet(A)}bcd^2 \\
    & +(-1)^{1+\sdet(A)} b^2cd + (-1)^nab^2 +(-1)^{1+n} bcd + (-1)^{\sdet(A)}bd^3  =0.\\ 
    \end{split}
\end{equation*}
Now, one way to simplify these equations is to calculate the Gr{\"o}ebner basis for the ideal $I=\braket{P_1,P_2,P_3,P_4} \subset\bC[a,b,c,d]$, with respect to some monomial order. For each of the four cases $(-1)^n = \pm1$ and $(-1)^{\sdet{A}} = \pm1$ we calculate the Gr{\"o}ebner basis in the graded reverse lexicographic order with the help of Matlab\textsuperscript \textregistered. Under the total order $a>c>d>b$, we have

\begin{enumerate}
    \item For $(-1)^{n+\sdet(A)} = 1$, the last generator in the Gr{\"o}ebner basis is $P^I_7 = b^2 - 2bd + b + d^2 + d$. The discriminant $\Delta_b$ for $b$ is $\Delta_b = 1-8d$, which gives $d = 0$, $b = -1$. Now recall from (\ref{ABform}), if $H\in \Gamma_2$, we have $b\equiv 0\mod 2$; in this circumstance, there is no solution to the system $\{P_j=0\}$.
    \item For $(-1)^{n+\sdet(A)} = -1$, the last generator in the Gr{\"o}ebner basis is $P^I_7 = b^2 - 2bd - b + d^2 - d$. The discriminant $\Delta_b$ for $b$ is $\Delta_b = 1+8d$. For $b\in\bZ$, we must have $1+8d = (1+2m)^2$, for some $m\in\bZ$. Then $d = \frac{m(m+1)}{2}$, and $b = \frac{m(m-1)}{2},\frac{(m+1)(m+2)}{2}$. Now, since $\det(A) = ad-bc = \pm1$, we have $\gcd(d,b) = 1$, therefore, we have $m =2,b=1,d=3$; or $m=-2,b=-3,d=-1$; or $m = 1,b = 3,d = 1$; or $m = -3,b=-1,d = -3$. Again, there is no solution with $b\equiv 0\mod 2$.
\end{enumerate}

Therefore, we arrive at $\Ad(\Gamma_2\cap\Sp(6,\bZ))\cdot \sigma_{12} \cap \sigma_{13} = \emptyset$.

\subsubsection{$\Ad(\Gamma)\cdot\sigma_{13}\cap \sigma_{13}'$} The calculation for this case is essentially the same as in Section \ref{sec1212}, so we will be brief. In this case, we can also assume the possible non-trivial interior intersections are in $\Ad(\Gamma)\cdot N_{13} \cap N'_{13}$, i.e., at $s_{1} > 0, t_{1}>0, s_{2}> 0, t_{2}\geq0$ with $(s_1,t_1)\neq (s_2,t_2)$. We will show $\Ad(\Sp(6,\bZ))\cdot \sigma_{13}$ and $\sigma_{13}'$ has no non-trivial interior intersection.

Now transform both $N_{13}$ and $N_{13}'$ by the adjoint action of matrix $G$ in (\ref{1213G}), we arrive at (\ref{1213N2}), with $(s_1,t_1)\rightarrow (s_2,t_2)$ for $N_{13}'$. Now assume there is a non-trivial interior intersection. By the same argument in Sec \ref{sec1212}, we can take $GHG^{-1}$ to be the Levi part as in (\ref{goodlevi}), and we want to solve the equations $S:= GHG^{-1}\cdot (\Ad(G) \cdot N_{13}) -  (\Ad(G)\cdot N_{13})\cdot GHG^{-1}=0_{6\times 6}$. We have
\begin{equation}\label{1313seq}
\begin{split}
    &S_{12} = (-1)^ns_1  - as_2 + cs_2 = 0,\\
    &S_{13} = ds_2 - bs_2 + (-1)^{1+n}s_1 = 0,\\
    &S_{25} = 4at_1 + 4b(s_1+t_1)  + (-1)^{1+\sdet(A)}4b(s_2 + t_2) +  (-1)^{\sdet(A)} 4dt_2=0,\\
    &S_{26} = 4bt_1 + 4a(s_1 + t_1) + (-1)^{\sdet(A)}4a(s_2 + t_2) +(-1)^{1+\sdet(A)} 4ct_2,\\ 
    &S_{35} = 4ct_1 + 4d(s_1+t_1)  + (-1)^{\sdet(A)}4d(s_2 + t_2) +  (-1)^{1+\sdet(A)} 4bt_2= 0,\\ 
    &S_{36} = 4dt_1 + 4c(s_1 + t_1) + (-1)^{1+\sdet(A)}4c(s_2 + t_2) +(-1)^{\sdet(A)} 4at_2 = 0,\\
    &S_{54} = (-1)^ns_2 + (-1)^{\sdet(A)}cs_1 + (-1)^{\sdet(A)}ds_1 =0,\\
    &S_{64} = (-1)^{n+1}s_2 + (-1)^{1+\sdet(A)}as_1 + (-1)^{1+\sdet(A)}bs_1 =0.
\end{split}
\end{equation}
This can be solved as follows, first we substitute $s_1$ and $t_2$ by using $S_{13}=S_{26}=0$, the expression for $s_1$ is 
\begin{equation}\label{expressionfors1}
s_1  = (-1)^n(d-b)s_2.
\end{equation}
Then we get $S_{12} = -s_2(a + b - c - d)=0$, this gives $c = a+b-d$, since $s_2\neq 0$. Substitute it  back, and using $s_2\neq 0$ again, we have
\begin{equation*}
\begin{split}
    P_1 := & ad - ab - bd + b^2 + (-1)^{n+\sdet(A)}a +  (-1)^{n+\sdet(A)}b =0,\\
    P_2 := & (a - d)(d - b + (-1)^{n+\sdet(A)}) = 0,\\
    P_3 := & b^2 - 2bd + (-1)^{n+\sdet(A)}b + d^2 + (-1)^{1+n+\sdet(A)}d + (-1)^{n+\sdet(A)}2a =0 ,\\
    P_4 := & (-1)^{\sdet(A)} - b^2 - ab + ad + bd  =0,\\ 
    P_5 := &  a(b - d) + b(b - d) + (-1)^{1+\sdet(A)}=0.
    \end{split}
\end{equation*}
From $P_2 = 0$, we have two possibilities
\begin{enumerate}
    \item If $a=d$, then $b = c$. Since $\det(A) = ad-bd = \pm1$, we have $a^2-b^2=\pm1$. This is only possible for $(a,b) = (\pm1,0), (0,\pm1)$, implying $ d-b = a-b =  \pm1$.  By (\ref{expressionfors1}), and $s_1,s_2\geq 0$, we have $s_1 = s_2$. Substitute these back into (\ref{1313seq}), for all four possibilities of $(a,b)$, we get $S_{25}=4(t_2-t_1)=0$. Therefore, we have only trivial interior intersections.
        \item If $b = d+(-1)^{n+\sdet(A)}$, then substituting this back into $P_1=P_3=0$,  we get $a = d = (-1)^{1+n+\sdet(A)}$. This gives $b = 0$. By $c = a+b-d$, this in turn gives $c =0$. Again, we have only trivial interior intersections.
\end{enumerate}
Therefore, we arrive at $\Ad(\Sp(6,\bZ))$ and $\sigma_{12} \cap \sigma_{13}$ has no non-trivial interior intersections. This finishes the proof that there are no interior intersections between the adjoint orbit of type $IV_2$.

\subsection{Type $II_1$} There are two cones with the interior of this type: they are $\sigma_{45}$ and $\sigma_{52}$. Thus there are three intersections to consider.

\subsubsection{$\Ad(\Gamma)\cdot \sigma_{52} \cap \sigma'_{52}$}\label{subsec5252prime}

This case also includes the case $\Ad(\Gamma)\cdot\sigma_{12}\cap \sigma_{52}$, since the right boundaries of $\sigma_{12}$ and $\sigma_{52}$ are the same.

Without losing generality, the possible non-trivial interior intersections  in $\Ad(\Gamma)\cdot N_{52} \cap N'_{52}$ are at points with parameters $s_{1} \geq 0, t_{1}>0, s_{2}> 0, t_{2}>0$, and $[s_1:t_1]\neq [s_2:t_2]$. We will show, $|\{H\in \Sp(6,\bZ):\Ad(\Sp(6,\bZ))\cdot N_{52} \cap N_{52}|\}|=\emptyset$.

First, we symplectically transform the basis, so that the matrix representation of $N_{52}$ becomes the standard form $N_{st}^{II_1}$ of type $II_1$
\begin{equation}\label{II_1stand}
N_{st}^{II_1}:= \Ad(G)\cdot N_{52}  = 
\left[\begin{array}{rrrrrr}0 & 0 & 0 & 1 & 0 & 0 \\0 & 0 & 0 & 0 & 1 & 0 \\0 & 0 & 0 & 0 & 0 & 1 \\0 & 0 & 0 & 0 & 0 & 0\\0 & 0 & 0 & 0 & 0 & 0\\ 0 & 0 & 0 & 0 & 0 & 0\end{array}\right],
\end{equation}
with
$$
G = \left[\begin{array}{rrrrrr}\frac{1}{\sqrt{2t_1}} & 0 & 0 & 0 & 0 & 0 \\
\frac{i}{2u} & \frac{\sqrt{2}i}{4u} & 0 & 0 & 0 & \frac{i-\frac{i}{\sqrt{2}}}{k} \\
-\frac{i}{2u} & \frac{\sqrt{2}i}{4u} & 0 & 0 & 0 & \frac{-i-\frac{i}{\sqrt{2}}}{u} \\
0 & 0 & \sqrt{\frac{t_1}{2}} & \sqrt{2t_1} & -\sqrt{2t_1} & 0\\
0 & 0 & \frac{iu}{2} & 0 & -(1+\sqrt{2})iu & 0\\
0 & 0 & -\frac{iu}{2} & 0 & (1-\sqrt{2})iu & 0\end{array}\right],
$$
where $u = (4s_1+t_1)^{1/2}$, and $G\in \Sp(6,\bQ[i,\sqrt{2},\sqrt{t_1},u])$. Now, assume there exists an interior intersection. Notice, $H\in\Sp(6,\bZ)$ and $\Ad(H)\cdot N_{52} \cap N_{52}'\neq\emptyset$ if and only if$\Ad(G HG^{-1})\cdot N^{II_1}_{st}\cap \Ad(G)\cdot N_{52}'\neq\emptyset $. Since $N_{52}$ and $N_{52}'$ induce the same Jacobson-Morosov filtration, $N_{st}^{II_1}$ and $\Ad(N_{52}')$ also induce the same Jacobson-Morosov filtration, which means, $G H G^{-1}$ has to be parabolic of the form $\left(\begin{array}{cc}* & * \\0 & *\end{array}\right)$, where each $*$ is a $3\times 3$ matrix. 

Now, if $G H G^{-1}$ stabilizes $N_{st}^{II_1}$, then we have $N^{II_1}_{st}= \Ad(G)\cdot N_{52}'$, since
\begin{equation}\label{52N52prime}
\Ad(G)\cdot N_{52}' = 
\left[\begin{array}{rrrrrr}0 & 0 & 0 & \frac{t_2}{t_1} & 0 & 0 \\0 & 0 & 0 & 0 & \frac{4s_1+t_1}{4s_2+t_2} & 0 \\0 & 0 & 0 & 0 & 0 & \frac{4s_1+t_1}{4s_2+t_2} \\0 & 0 & 0 & 0 & 0 & 0\\0 & 0 & 0 & 0 & 0 & 0\\ 0 & 0 & 0 & 0 & 0 & 0\end{array}\right].
\end{equation}
This means $(s_1,t_1) = (s_2,t_2)$, which corresponds to trivial intersections. Therefore, in order to find a potential non-trivial interior intersection, we would like to eliminate the degrees of freedom in $G H G^{-1}$ that stabilize $N_{st}^{II_1}$ in \eqref{II_1stand}.

To do so, first consider the Levi decomposition of $G H G^{-1}$. The unipotent part $\left[\begin{array}{cc} \mathrm{Id} & M\\0 & \mathrm{Id}\end{array}\right]$ stabilizes the standard form $N_{st}^{II_1}$
$$
\left[\begin{array}{cc} \mathrm{Id} & M\\0 & \mathrm{Id}\end{array}\right]\left[\begin{array}{cc} 0 & \mathrm{Id}\\0 & 0\end{array}\right]\left[\begin{array}{cc} \mathrm{Id} & -M\\0 & \mathrm{Id}\end{array}\right] =  \left[\begin{array}{cc} 0 & \mathrm{Id}\\0 & 0\end{array}\right].
$$
Thus we may assume
\begin{equation}\label{52simpcond1}
G H G^{-1} = \left[\begin{array}{cc} A & 0\\0 & (A^{T})^{-1}\end{array}\right],\quad A\in \mathrm{GL}(3,\bQ[i,\sqrt{2},\sqrt{t_1},u])
\end{equation}
is block diagonal. The lower right block is determined by the symplecticity. Further, since $$\Ad(G H G^{-1})\cdot N_{st}^{II_1} = \left[\begin{array}{cc} 0 & AA^{T}\\0 & 0\end{array}\right],$$ $A\in O(3,\bQ[i,\sqrt{2},\sqrt{t_1},u])$, the QR decomposition (a rudimentary case of Cartan decomposition) allows us to restrict to the case that $A$ to be upper triangular
\begin{equation}\label{52simpcond2}
A =  \left[\begin{array}{ccc} a_{11} & a_{12} & a_{13}\\0 & a_{22} & a_{23}\\ 0&0&a_{33}\end{array}\right], \quad a_{ij}\in \bQ[i,\sqrt{2},\sqrt{t_1},u].
\end{equation}
Aligning (\ref{52simpcond2}) and (\ref{52N52prime}), we have
\begin{equation}\label{52simpcond3}
\left[\begin{array}{ccc} a_{11}^2+a_{12}^2+a_{13}^2 & a_{12}a_{22}+a_{13}a_{23} & a_{13}a_{33}\\a_{12}a_{22}+a_{13}a_{23} & a_{22}^2+a_{23}^2 & a_{23}a_{33}\\  a_{13}a_{33} &  a_{23}a_{33} &a_{33}^2\end{array}\right]= \left[\begin{array}{rrr} \frac{t_2}{t_1} & 0 & 0 \\  0 & \frac{4s_1+t_1}{4s_2+t_2} & 0 \\ 0 & 0 & \frac{4s_1+t_1}{4s_2+t_2} \end{array}\right].
\end{equation}
Then $\det(A)\neq 0$ implpies $a_{22},a_{33}\neq 0$ and then $a_{13}=a_{23}=a_{12} = 0$, which in turn gives $a_{22} = a_{33}$.
Therefore, up to multiplication by a diagonal sign matrix of the form $\mathrm{diag}((-1)^{k_1},(-1)^{k_2},(-1)^{k_3},(-1)^{k_1},(-1)^{k_2},(-1)^{k_3}), (k_1,k_2,k_3)\in \bZ^3$, we have
\begin{equation}\label{52GHGform}
G H G^{-1} = \left[\begin{array}{rrrrrr} \sqrt{\frac{t_2}{t_1}} & 0 & 0 & 0 & 0 & 0 \\0 & \sqrt{\frac{4s_1+t_1}{4s_2+t_2}} & 0 & 0 & 0 & 0 \\0 & 0 & \sqrt{\frac{4s_1+t_1}{4s_2+t_2}} & 0 & 0 & 0 \\0 & 0 & 0 & \sqrt{\frac{t_1}{t_2}} & 0 & 0\\0 & 0 & 0 & 0 & \sqrt{\frac{4s_2+t_2}{4s_1+t_1}} & 0\\ 0 & 0 & 0 & 0 & 0 & \sqrt{\frac{4s_2+t_2}{4s_1+t_1}}\end{array}\right].
\end{equation}

Now we wish to solve \eqref{52GHGform} for  $H=(h_{ij})_{i,j=1,\cdots,6}\in \mathrm{GL}(6,\bZ)$. The 30 equations $(G H G^{-1})_{i,j}=0,i\neq j$ yields
$$
GHG^{-1} =  \mathrm{diag}
\left[\begin{array}{c}6h_{62}-h_{21}+h_{66} \\ 2h_{62}+h_{66}+2\sqrt{2}h_{62} \\ 2h_{62}+h_{66}-2\sqrt{2}h_{62}\\h_{33}+2h_{43}-2h_{53}\\ h_{33}-2h_{53}-2\sqrt{2}h_{53} \\  h_{33}-2h_{53}+2\sqrt{2}h_{53}\end{array}\right]:= \left[\begin{array}{cc} U & 0\\0 & L\end{array}\right].
$$
Now, by (\ref{52simpcond1}), we have $U^{T}\cdot L - \mathrm{Id}_{3\times 3} = 0$. The diagonal gives the following equations
\begin{equation}\label{52const1}
\begin{aligned}
    &(h_{33} + 2h_{43} - 2h_{53})\cdot(2h_{62} - 2h_{61} + h_{66}) - 1=0,\\
      &(h_{33} - 2h_{53} - 2\sqrt{2}h_{53} )\cdot(2h_{62} + h_{66} + 2\sqrt{2}h_{62} ) - 1=0,\\
     &(h_{33} - 2h_{53} - 2\sqrt{2}h_{53} )\cdot(2h_{62} + h_{66} + 2\sqrt{2}h_{62} ) - 1=0,
\end{aligned}
\end{equation}
where the second and the third one are identical as expected from (\ref{52GHGform}). Since $h_{ij}\in\bZ$, the first equation in (\ref{52const1}) tells us $a_{11} = 2h_{62} - 2h_{61} + h_{66} = \pm1$. Combined with (\ref{52simpcond3}), this gives $t_1 = t_2$. Moreover, $\frac{4s_1+t_1}{4s_2+t_2}\in\bQ$ implies $ a_{22}^2\in\bQ$ then $ (2h_{62}+h_{66}+2\sqrt{2}h_{62})^2\in\bQ \Rightarrow h_{62} = 0$. Now consider the second equation in (\ref{52const1}), we have $h_{62} = 0$ implies $ h_{53} = 0$ then $ a_{22} = 2h_{62} + h_{66} + 2\sqrt{2}h_{62}\in\bZ$ then $  a_{22} = 2h_{62} + h_{66} + 2\sqrt{2}h_{62} =\pm1 $ then $ \frac{4s_1+t_1}{4s_2+t_2} =1$, which further implies $ s_1 = s_2$. Therefore, we can conclude $\Ad(\Sp(6,\bZ))\cdot\sigma_{52} \cap \sigma'_{52}$ contains no nontrivial interior intersection. In other words, we have $|\{H\in \Sp(6,\bZ):\Ad(\Sp(6,\bZ))\cdot N_{52} \cap N_{52}|\}|=\emptyset $.

\subsubsection{$\Ad(\Gamma)\cdot \sigma_{45} \cap \sigma'_{45}$}

The possible non-trivial interior intersections are in $\Ad(\Gamma)\cdot N_{45} \cap N'_{45}$, i.e., at $s_{1} > 0, t_{1}>0, s_{2}> 0, t_{2}>0$ with $(s_1,t_1)\neq (s_2,t_2)$. We will show a stronger result, $|H\in \Sp(6,\bZ):\Ad(\Sp(6,\bZ))\cdot N_{45} \cap N'_{54}|\}|<\infty$. The procedure will be very similar to what we did in Section \ref{subsec5252prime}, so we will skip some details.

First, we symplectically transform $N_{45}$ into the standard form of type $II_1$ by the adjoint action of a matrix $G\in\bQ[i,\sqrt{2},\sqrt{s_1},\sqrt{t_1}]$
\begin{equation}\label{45trans}
G = \left[\begin{array}{rrrrrr}0 & 0 & 0 & \frac{1}{\sqrt{s_1}} & 0 & 0 \\0 & \frac{-i}{4\sqrt{t_1}} & 0 & 0 & 0 & \frac{i}{2\sqrt{t_1}} \\0 & 0 & 0 & 0 & 0 & \frac{-i}{\sqrt{2t_1}} \\-\sqrt{s_1} & 0 & 0 & -2\sqrt{s_1} & 0 & 0\\0 & 0 & 0 & 0 & 4i\sqrt{t_1} & 0\\ 0 & 0 & -i\sqrt{2t_1} & 0 & 2i\sqrt{2t_1} & 0\end{array}\right],\quad
\end{equation}

Beside, $\Ad(G)\cdot N_{45} = N_{st}^{II_1}$, we also have
$$
\Ad(G)\cdot N_{45}' = \left[\begin{array}{rrrrrr}0 & 0 & 0 & \frac{s_2}{s_1} & 0 & 0 \\0 & 0 & 0 & 0 & \frac{t_2}{t_1} & 0 \\0 & 0 & 0 & 0 & 0 & \frac{t_2}{t_1} \\0 & 0 & 0 & 0 & 0 & 0\\0 & 0 & 0 & 0 & 0 & 0\\ 0 & 0 & 0 & 0 & 0 & 0\end{array}\right],
$$
which is diagonal. Now assume $\Ad(\Sp(6,\bZ))\cdot N_{45}$ do intersect $N_{45}.'$ non-trivially, which means $\exists H\neq \mathrm{Id}$, $H\in\Sp(6,\bZ)$, such that, $\Ad(GHG^{-1})\cdot N_{st}^{II_1} = \Ad(G)\cdot N_{45}$, has solution at $(s_1,t_1)\neq (s_2,t_2)$. By the essentially the same argument as in Section \ref{subsec5252prime}, up to signs, $GHG^{-1}$ is diagonal of the form
\begin{equation}\label{45GHGform}
GHG^{-1} =\left[\begin{array}{rrrrrr} \sqrt{\frac{s_2}{s_1}} & 0 & 0 & 0 & 0 & 0 \\0 & \sqrt{\frac{t_2}{t_1}} & 0 & 0 & 0 & 0 \\0 & 0 & \sqrt{\frac{t_2}{t_1}} & 0 & 0 & 0 \\0 & 0 & 0 & \sqrt{\frac{s_1}{s_2}} & 0 & 0\\0 & 0 & 0 & 0 & \sqrt{\frac{t_1}{t_2}} & 0\\ 0 & 0 & 0 & 0 & 0 & \sqrt{\frac{t_1}{t_2}}\end{array}\right].
\end{equation}

Now, starting with a general element $H = (h_{ij})_{i,j = 1,\cdots,6}\in\mathrm{GL}(6,\bZ)$, and reduce the degree of freedom of $H$ by using $GHG^{-1}$ is diagonal, we arrive at
$$
GHG^{-1} = \left[\begin{array}{rrrrrr}h_{44} & 0 & 0 & 0 & 0 & 0 \\0 & h_{22} & 0 & 0 & 0 & 0 \\0 & 0 & h_{66} & 0 & 0 & 0 \\0 & 0 & 0 & h_{11} & 0 & 0\\0 & 0 & 0 & 0 & h_{55} & 0\\ 0 & 0 & 0 & 0 & 0 & h_{33}\end{array}\right].
$$
Combined with (\ref{45GHGform}), we see $h_{11}h_{44} = 1$, $h_{22}h_{55} = 1$, $h_{33}h_{66}=1$, since $h_{ij}\in\bZ$, we have $h_{ii} = \pm 1, i=1,\cdots,6$ $\Rightarrow (s_1,t_1)  = (s_2,t_2)$, which gives the desired contradiction. Therefore, we can conclude $\Ad(\Sp(6,\bZ))\cdot\sigma_{52} \cap \sigma'_{52}$ contains no nontrivial interior intersection, in other words, we have $|\{H\in \Sp(6,\bZ):\Ad(\Sp(6,\bZ))\cdot N_{52} \cap N_{52}|\}|<\infty$.

\subsubsection{$\Ad(\Gamma)\cdot \sigma_{45} \cap \sigma_{52}$}
The discussion here also includes the case  $\Ad(\Gamma)\cdot\sigma_{45}\cap \sigma_{12}$, where the intersection is at the boundary of $\sigma_{12}$.

By the alignment of the type of LMHS, the possible interior intersections are at $s_{1} > 0, t_{1} > 0, s_{2} \geq 0, t_{2} > 0$. We will show $\Ad(\Sp(6,\bZ))\cdot N_{45} \cap N_{52} = \emptyset$.

Assume not, then there exists $ G\in \Sp(6,\bZ)$ and $s_{1} \geq 0, t_{1} > 0, s_{2} > 0, t_{2} > 0$, such that $\Ad(G)\cdot (s_1N_5+t_1N_2) = s_2N_4+t_2N_5$. Especially, the operators on both sides of the equality should induce the same Jacobson-Morosov filtration. Now, the Jacobson-Morosov filtration for $N_{52}$ and $N_{45}$ are given as follows
\[
\begin{split}
    &W^{N_{52}}: 0\xhookrightarrow{}\braket{e_1,e_2,e_6}\xhookrightarrow{} V,\\
    &W^{N_{45}}:0\xhookrightarrow{}\braket{2e_1-e_4,e_4,e_6}\xhookrightarrow{}V.
\end{split}
\]
To proceed, we first symplectically transform the basis $\{e_i\}_{i=1}^6$ by $G_1\in\Sp(6,\bZ)$, such that the induced action on filtration satisfies $G_1\cdot W^{N_{52}} = W^{N_{45}}$. We take
$$
G_1 = \left[\begin{array}{rrrrrr}0 & 2 & 0 & 0 & 1 & 0 \\1 & 0 & 0 & 0 & 0 & 0 \\0 & 0 & 1 & 0 & 0 & 0 \\0 & -1 & 0 & 0 & 0 & 0\\0 & 0 & 0 & 1 & 0 & 0\\ 0 & 0 & 0 & 0 & 0 & 1\end{array}\right],
$$
Next, we do another symplectic transformation by (\ref{45trans}) for the new basis, such that the operator $N_{45}$ has the matrix representation as the standard form $N^{II_1}_{st}$ (\ref{II_1stand}). In this section, we denote it as $G_2$. Then we have
$$
\Ad(G_2G_1)\cdot N_{52} =  
\left[\begin{array}{rrrrrr}0 & 0 & 0 & -\frac{24s_2+4t_2}{s_1} & i\frac{4s_2-t_2}{2\sqrt{s_1t_1}} & \frac{-i2\sqrt{2}s_2}{\sqrt{s_1t_1}} \\0 & 0 & 0 & i\frac{4s_2-t_2}{2\sqrt{s_1t_1}} & \frac{4s_2-3t_2}{8t_1} & -\frac{\sqrt{2}(4s_2-t_2)}{8t_1} \\0 & 0 & 0 & \frac{-i2\sqrt{2}s_2}{\sqrt{s_1t_1}} & -\frac{\sqrt{2}(4s_2-t_2)}{8t_1} & \frac{s_2}{t_1} \\0 & 0 & 0 & 0 & 0 & 0\\0 & 0 & 0 & 0 & 0 & 0\\ 0 & 0 & 0 & 0 & 0 & 0\end{array}\right].
$$
To show $\Ad(\Sp(6,\bZ))\cdot N_{45} \cap N_{52} = \emptyset$,  is equivalent to show  $\Ad(G_2HG_2^{-1})\cdot N^{II_1}_{st} \neq \Ad(G_2G_1)\cdot N_{52}$, forall $ H\in\Sp(6,\bZ)$. Then by the same argument as in Section \ref{subsec5252prime}, we can take $G_2HG_2^{-1}$ to be of the form (\ref{52simpcond1}) and with block $A\in\mathrm{GL}(3,\bQ[i,\sqrt{2},\sqrt{s_1},\sqrt{t_1}])$ in the form (\ref{52simpcond2}). Moreover, we have
\begin{equation}\label{4552simpcond3}
\begin{split}
AA^{T} = & \left[\begin{array}{ccc} a_{11}^2+a_{12}^2+a_{13}^2 & a_{12}a_{22}+a_{13}a_{23} & a_{13}a_{33}\\a_{12}a_{22}+a_{13}a_{23} & a_{22}^2+a_{23}^2 & a_{23}a_{33}\\  a_{13}a_{33} &  a_{23}a_{33} &a_{33}^2\end{array}\right]\\
= & \left[\begin{array}{rrr} -\frac{24s_2+4t_2}{s_1} & i\frac{4s_2-t_2}{2\sqrt{s_1t_1}} & \frac{-i2\sqrt{2}s_2}{\sqrt{s_1t_1}} \\  i\frac{4s_2-t_2}{2\sqrt{s_1t_1}} & \frac{4s_2-3t_2}{8t_1} & -\frac{\sqrt{2}(4s_2-t_2)}{8t_1} \\ \frac{-i2\sqrt{2}s_2}{\sqrt{s_1t_1}} & -\frac{\sqrt{2}(4s_2-t_2)}{8t_1} & \frac{s_2}{t_1} \end{array}\right].
\end{split}
\end{equation}

Now, we want to reduce the degrees of freedom in $H = (h_{ij})_{i,j = 1,\cdots,6}\in\mathrm{GL}(6,\bZ)$, by using the fact that $G_2HG_2^{-1}$ is of the form (\ref{52simpcond1}). More precisely, we use the following equations to reduce the number of free variables in $H$
$$
S = G_2H - \left[\begin{array}{cc} A & 0\\0 & (A^{T})^{-1}\end{array}\right] = 0_{6\times 6}, \quad A = \left[\begin{array}{ccc} a_{11} & a_{12} & a_{13}\\0 & a_{22} & a_{23}\\ 0&0&a_{33}\end{array}\right].
$$
This process is apparently not unique, and one solution is
$$
H = \left[\begin{array}{rrrrrr}h_{11} & -\frac{2h_{22}h_{51}}{h_{11}} & 0 & \frac{2h_{11}^2-2}{h_{11}} & 0 & -2h_{46} \\0 & h_{22} & 0 & 0 & 0 & h_{26} \\h_{33}\alpha & 0 & h_{33} & 2h_{33}\alpha & \frac{h_{26}h_{33}}{h_{22}} & 0 \\0 & \frac{h_{22}h_{51}}{h_{11}} & 0 & \frac{1}{h_{11}} & 0 & h_{46}\\h_{51} & 0 & 0 & 2h_{51} & \frac{1}{h_{22}} & 0\\ 0 & 0 & 0 & 0 & 0 & \frac{1}{h_{33}}\end{array}\right],
$$
with  $\alpha = h_{26}h_{51}-h_{11}h_{46}$ and
$$
A = \left[\begin{array}{ccc} \frac{1}{h_{11}} & \frac{4ih_{22}h_{51}\sqrt{t_1}}{h_{11}\sqrt{s_1}} & \frac{i\sqrt{2t_1}(h_{11}h_{46}+2h_{22}h_{51})}{h_{11}\sqrt{s_1}}\\0 & h_{22} & \frac{\sqrt{2}(2h_{22}h_{33}+h_{26}h_{33}-2)}{4h_{33}}\\ 0&0&\frac{1}{h_{33}}\end{array}\right]. 
$$
We note the equations $S=0_{6\times6}$ combined with the (semi-)positivity of the parameters forces $h_{ii}\neq 0, i=1,2,3$, so dividing by them is valid. Moreover, since $H\in\Sp(6,\bZ)$, we have $h_{ii} = (-1)^{k_i}$, $i=1,2,3$, $k_i\in\bZ$. Then substituting these back into (\ref{4552simpcond3}), the $(2,2),(3,3)$ entries give
\begin{equation}\label{4552fineq}
\begin{split}
    P_1 := &(2(-1)^{k_2+k_3} - 2 + (-1)^{k_3}h_{26})^2 + 8 - \frac{4s_2-3t_2}{t_1} = 0,\\
    P_2 := & 1 -\frac{s_2}{t_1} = 0.\\
    \end{split}
\end{equation}
From $P_2 = 0$, we have $s_2 = t_1$. Then substitute back, we have $P_1 = (2(-1)^{k_2+k_3} - 2 + (-1)^{k_3}h_{26})^2 +  \frac{4t_1+3t_2}{t_1}=0$. Since $t_1,t_2>0$, this can never happen. 
Therefore, we can conclude $\Ad(\Sp(6,\bZ))\cdot\sigma_{52} \cap \sigma_{45}$ contains no nontrivial interior intersection. This completes the proof that there are no interior intersections between the adjoint orbit of type $II_1$.

\subsection{Type $I_2$}
There are 3 cones, $\sigma_{34},\sigma_{63},\sigma_{67}$, whose interior has type $I_2$ type. The Jacobson-Morosov filtration for $N_{34}$, $N_{63}$ and $N_{67}$ are given as follows
\[
\begin{split}
    W^{N_{34}}: 0 & \xhookrightarrow{}\braket{2e_1-e_4,e_3}\xhookrightarrow{} \braket{2e_1-e_4,e_2,e_3,e_5}\xhookrightarrow{} V,\\
    W^{N_{63}}:0 & \xhookrightarrow{}\braket{2e_4-e_5,e_3}\xhookrightarrow{}\braket{e_1+2e_2,e_3,e_4,e_5}\xhookrightarrow{} V,\\
    W^{N_{67}}:0 & \xhookrightarrow{}\braket{2e_2+e_4-e_5+e_6,2e_3-2e_4+e_5}\\
    & \xhookrightarrow{}\braket{2e_1+e_5-2e_6,2e_2-e_5+e_6,2e_3+e_5,e_4}\xhookrightarrow{} V.
\end{split}
\]

A priori, we have 6 cases need to discuss. 
However, for each of the three operators  $N_{34}$, $N_{63}$ and $N_{67}$, one can implement an integral symplectic transformation of the basis $\{e_i\}_{i=1}^6$ such that in the new basis the corresponding operator has matrix representation $N:=\{sE_{14}+tE_{25}:s,t>0\},$
where $E_{ij}$ is the matrix has 1 at $(i,j)$ entry and 0 elsewhere. One can check these three integral symplectic transformation can be induced by the following matrices
$$
G_{34} = \left[\begin{array}{rrrrrr}1 & 0 & 0 & 1 & 0 & 0 \\0 & 0 & 1 & 0 & 0 & 0 \\0 & 1 & 0 & 0 & 0 & 0 \\1 & 0 & 0 & 2 & 0 & 0\\0 & 0 & 0 & 0 & 0 & 1\\ 0 & 0 & 0 & 0 & 1 & 0\end{array}\right],\quad G_{63} = \left[\begin{array}{rrrrrr}0 & 0 & 1 & 0 & -2 & 0 \\0 & 0 & 0 & 0 & 1 & 0 \\1 & 0 & 0 & 0 & 0 & 0 \\0 & 0 & 0 & 0 & 0 & 1\\2 & -1 & 0 & 0 & 0 & 2\\ 0 & 0 & 0 & 1 & 2 & 0\end{array}\right],
$$
and
$$
G_{67} = \left[\begin{array}{rrrrrr}0 & 0 & 0 & 1 & 1 & 0 \\0 & 0 & 0 & 1 & 2 & 0 \\0 & 0 & 1 & 2 & 2 & 0 \\-2 & 1 & 0 & 0 & 0 & -2\\1 & -1 & 1 & 0 & -2 & 0\\ 0 & 0 & 0 & 1 & 2 & 1\end{array}\right],\quad G_{34}, G_{63}, G_{67}\in\Sp(6,\bZ).
$$
In other words, we have $N:=\Ad(G_{34})\cdot N_{34} = \Ad(G_{63})\cdot N_{63} = \Ad(G_{67})\cdot N_{67} $. $N$ induces a grading-adapted weight filtration
\begin{equation}\label{JMfilI2}
W^{I_2}: 0\xhookrightarrow{}\braket{e_1',e_2'}\xhookrightarrow{} \braket{e_1',e_2',e_3',e_6'}\xhookrightarrow{} V.
\end{equation}
Denote the closure of $N$ as the cone $\sigma$. We only need to show $\Ad(G)\cdot \sigma^\circ$ and $\sigma'^{\circ}$ have no non-trivial interior intersection, for all $G \in \Sp(6,\bZ)$. The possible interior intersections are at points with parameters $s_{1} > 0, t_{1} > 0, s_{2} > 0, t_{2} > 0$.

Now consider a Levi decomposition of $G$ with respect to the filtration (\ref{JMfilI2}). As before, the unipotent part of $G$ fixes $\sigma$, so they only have trivial intersections. Therefore, we only need to consider the Levi part of $G$
$$
G = \left[\begin{array}{rrrrrr}h_{11} & h_{12} & 0 & 0 & 0 & 0 \\h_{21} & h_{22} & 0 & 0 & 0 & 0 \\0 & 0 & h_{33} & 0 & 0 & h_{36} \\0 & 0 & 0 & h_{44} & h_{45} & 0\\0 & 0 & 0 & h_{54} & h_{55} & 0\\ 0 & 0 & h_{36} & 0 & 0 & h_{66}\end{array}\right],\quad G\in\Sp(6,\bZ).
$$
Assume there exist interior intersections. Then the system of equations $GN-N'G = 0_{6\times 6}$ have solutions. Now we reduce the degrees of freedom in $G$ by using the condition $GN-N'G = 0_{6\times 6}$. This gives
$$
G = \left[\begin{array}{rrrrrr}(-1)^{k_1}\sqrt{\frac{s_2}{s_1}} & 0 & 0 & 0 & 0 & 0 \\0 & (-1)^{k_2}\sqrt{\frac{t_2}{t_1}} & 0 & 0 & 0 & 0 \\0 & 0 & h_{33} & 0 & 0 & h_{36} \\0 & 0 & 0 & (-1)^{k_2}\sqrt{\frac{s_1}{s_2}} & 0 & 0\\0 & 0 & 0 & 0 & (-1)^{k_1}\sqrt{\frac{t_1}{t_2}} & 0\\ 0 & 0 & h_{36} & 0 & 0 & h_{66}\end{array}\right],
$$
Now, in order for $G\in \Sp(6,Z)$, one must have $(s_1,t_1) = (s_2,t_2)$. One can easily check for all $ k_{1,2}\in\bZ$, the resulting $G$ fixes all points in $\sigma^{\circ}$. Therefore, we can conclude, the $\Sp(6,\bZ)$ orbits for $\sigma_{34}$, $\sigma_{63}$ and $\sigma_{67}$ have no non-trivial interior intersection.

\appendix
\section{Proof of Proposition \ref{lemhosin}}\label{appa}
In this section, we give a proof of Proposition \ref{lemhosin}.
Our proof is constructive, which is suggested in the heuristic arguments in \cite{hosono2000local}. Note, in this appendix we will go back to using the standard symplectic form $Q^{st}$ defined in (\ref{strdsympl}).
For convenience, we recall the statement of the proposition.
\begin{proposition21}Let $\hat X_0$ be a smooth Calabi-Yau 3-fold. Given an arbitrary basis $\{T_i\}_{i=1}^r$ of $H^2(\hat X_0,\bZ)/torsion$,, define $\{K_j\}_{j=1}^r\subset H^4(\hat X_0,\bZ)/torsion$ by $\int_{\hat X_0}T_kK_j = \delta_{kj}$, and define $V$ via $\int_{\hat X_0} V=-1$. Then there exists an integral symmetric matrix $C:=(C_{ij})_{r\times r}$, such that 
\begin{equation}\label{hosobaisis2}
\{2,(2T_k-\sum_{l}C_{kl}K_l)td^{-1}_{\hat X_0},K_{r-j},V\},\quad j,k=1,\cdots,r
\end{equation}
gives an integral symplectic basis for $K_0(\hat X_0)\otimes_\bZ\bC$ under the inverse of the Chern character. Moreover, if one assumes homological mirror symmetry or that Conjecture \ref{conjihms} holds for $\hat X_0$, then 
\[
\{1,(T_k-\sum_{l}\frac 12 C_{kl}K_l)td^{-1}_{\hat X_0},K_{r-j},V\}    
\]
is a symplectic basis for $K_0(\hat X_0)/torsion$.
\end{proposition21}

In our proof, we will need the integral Hodge conjecture for smooth Calabi-Yau 3-fold, which is proved by Voisin \cite{voisin2006integral}.

\begin{theorem}[Theorem 2 in \cite{voisin2006integral}]\label{voisinint}
Let $\hat X_0$ be a smooth complex projective threefold such that it is either uniruled or satisfies
$$
K_{\hat X_0} \cong \mathcal O_{\hat X_0},\quad H^{2}(X,\mathcal O_X)=0.
$$
Then the Hodge conjecture is true for integral degree 4 Hodge classes on $\hat X_0$, i.e., elements in $H^{2,2}(\hat X_0)\cap H^4(\hat X_0,\bZ)$ are algebraic.
\end{theorem}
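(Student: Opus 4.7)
The strategy is to construct explicit coherent-sheaf representatives for each basis vector and then verify the symplectic pairing through Hirzebruch--Riemann--Roch (HRR).

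I would first organize the setup. Since $h^{0,1}(\hat X_0)=h^{0,2}(\hat X_0)=0$, $\mathrm{Pic}(\hat X_0)/\mathrm{tors}\cong H^2(\hat X_0,\bZ)/\mathrm{tors}$, so each $T_k$ is realized as $c_1(L_k)$ for a line bundle $L_k$. Unimodularity of the Poincar\'e pairing supplies an integral dual basis $\{K_j\}$. The indispensable ingredient is Voisin's integral Hodge theorem (Theorem \ref{voisinint}): each $K_j$ is algebraic and hence represented by a smooth curve $C_j\hookrightarrow \hat X_0$ of some genus $g_j$. Applying GRR to the inclusions $\{p\}\hookrightarrow\hat X_0$ and $C_j\hookrightarrow\hat X_0$ gives $ch(\mathcal{O}_p)=-V$ and $ch(i_{C_j*}\mathcal{O}_{C_j}+(g_j-1)\mathcal{O}_p)=K_j$, exhibiting $-V$ and $K_j$ as integral K-theory classes.

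The crux is constructing an integral representative for $\alpha_k:=(2T_k-\sum_l C_{kl}K_l)\,td_{\hat X_0}^{-1}$. Since $c_1(\hat X_0)=0$, $td_{\hat X_0}^{-1}=1-c_2(\hat X_0)/12$ on $H^{\mathrm{even}}$, and $K_l\cdot c_2\in H^8=0$, giving $\alpha_k=2T_k-\sum_l C_{kl}K_l+\tfrac{1}{6}\bigl(\int T_k c_2(\hat X_0)\bigr)V$. Starting from $L_k^{\oplus 2}-\mathcal{O}_{\hat X_0}^{\oplus 2}$, whose Chern character equals $2T_k+T_k^2+T_k^3/3$, I would expand $T_k^2=\sum_l n_{kl}K_l$ with integer triple-intersection coefficients $n_{kl}=\int T_l T_k^2$. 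The residual difference lies in $H^4\oplus H^6$ and equals $-\sum_l(C_{kl}+n_{kl})K_l+2\chi(L_k)\,V$, where $\chi(L_k)=(2\int T_k^3+\int T_k c_2(\hat X_0))/12$ is an integer by HRR. Thus the correction is realized integrally as a combination of the curve/point representatives above, giving an integral K-theory class with Chern character $\alpha_k$.

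For the symplectic pairing I would apply HRR in the form $\chi(E,F)=\int ch(E)^\vee\cdot ch(F)\cdot td_{\hat X_0}$, with $(\cdot)^\vee$ acting as $(-1)^i$ on $H^{2i}$. Since all triple products in $H^{\geq 4}$ on the 3-fold vanish, extracting the degree-6 part leaves only $\chi(2,V)=-2$, $\chi(\alpha_k,K_j)=-2\delta_{kj}$, and $\chi(\alpha_k,\alpha_{k'})=2(C_{k'k}-C_{kk'})$; the last vanishes when $C$ is symmetric. The resulting Gram matrix in the ordering $\{2,\alpha_1,\dots,\alpha_r,K_r,\dots,K_1,V\}$ is $2\cdot Q^{\mathrm{st}}$, establishing the first claim. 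For the second (half-integer) claim, Conjecture \ref{conjihms} identifies $K_0(\hat X_0)/\mathrm{tors}$ with the unimodular symplectic lattice $H^3(X_b,\bZ)$, whose pairing takes values in $\{0,\pm 1\}$; halving $2\mapsto 1$ and $C\mapsto\tfrac12 C$ then yields the basis $\{1,(T_k-\tfrac12\sum_l C_{kl}K_l)td_{\hat X_0}^{-1},K_{r+1-j},V\}$ with Gram matrix $Q^{\mathrm{st}}$, integral under this refined structure.

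The main obstacle is the integrality of the K-theoretic representative for $\alpha_k$: the $H^4$-correction depends on Voisin's theorem, while the $H^6$-correction is integral only because the Calabi--Yau condition forces $\chi(L_k)\in\bZ$ via Riemann--Roch. The passage from the first to the second claim is not formal; it genuinely requires the mirror-symmetric input supplied by Conjecture \ref{conjihms}.
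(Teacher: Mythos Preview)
Your proposal does not address the stated theorem at all. Theorem~\ref{voisinint} is Voisin's result on the integral Hodge conjecture for degree~4 classes on Calabi--Yau (or uniruled) threefolds; the paper does not prove it but simply quotes it from \cite{voisin2006integral} as a black box to be used later. What you have written is instead a proof sketch for Proposition~\ref{lemhosin} (the existence of Hosono's integral symplectic basis), in which Voisin's theorem appears only as an input. So there is no proof of the statement in question to compare against, and your proposal is for a different statement.

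If your intent was actually Proposition~\ref{lemhosin}, then your argument is essentially sound and parallels the paper's Appendix~\ref{appa}, with one genuine simplification. The paper realizes the degree~2 classes by pushing forward line bundles from smooth \emph{divisors} $i_l:D_l\hookrightarrow\hat X_0$ and applying GRR to $i_{l*}\mathcal F_l$, which produces a half-integral $H^4$-correction $R_l=i_{l*}(c_1(\mathcal F_l)+\tfrac12 c_1(D_l))$ that must then be symmetrized. You instead start from the virtual bundle $L_k^{\oplus 2}-\mathcal O^{\oplus 2}$ and correct in $H^4\oplus H^6$ using the curve and point classes already constructed; the $H^6$-correction is $2\chi(L_k)V$, integral by Riemann--Roch, and the $H^4$-correction is manifestly integral for any integral choice of $C$. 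This is cleaner and avoids the divisor GRR computation, though it relies on the same two external inputs (Voisin for $H^4$, Hironaka--Kleiman for smoothness of the curve representatives). One point you glide over: you should state explicitly that since your construction produces an integral $K$-class for \emph{every} integral matrix $C$, and the symplectic condition forces $C$ to be symmetric, any integral symmetric $C$ works---so existence is immediate. The paper makes this symmetrization step more explicit.
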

This theorem covers the Calabi-Yau 3-folds in our paper (Definition \ref{defcy3f}). Another classical result we will need is the following positive answer for the Borel-Haefliger problem \cite{borel1961classe}.

\begin{theorem}[Hironaka-Kleiman \cite{hironaka1968smoothing,kleiman1969geometry}]\label{thmhirokl}
For a smooth complex projective variety $X$ with dimension $n$, the Chow group $CH_d(X)$ of $d$-dimensional algebraic cycles are generated by smooth subvarieties if one of the following two condition holds:
\begin{enumerate}
    \item $d\leq\max\{3,(n-1)/2\}$,
    \item $n-d = 1,2$, and $d<(n+2)/2$.
\end{enumerate}
\end{theorem}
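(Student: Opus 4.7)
My plan is to build, for each element of the proposed cohomology basis, an explicit class in $K_0(\hat X_0)$ whose Chern character equals that element on the nose, and then compute the Euler pairing $\chi$ on these classes directly. The symmetry of $C$, and the fact that $C$ can be chosen integral, will fall out of the pairing calculation. The building blocks are: $2\mathcal O_{\hat X_0}$ for the leading $2$; line bundles $\mathcal L_k$ with $c_1(\mathcal L_k)=T_k$ for the $H^2$ slots; (sums of) structure sheaves of smooth irreducible curves, corrected by skyscraper sheaves, for the $H^4$ slots $K_l$; and $-\mathcal O_p$ for $V$.

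\textbf{Smooth-curve representatives for $H^4$.} The key cohomological input is that every $K_l \in H^4(\hat X_0,\bZ)/\mathrm{torsion}$ is a $\bZ$-linear combination of fundamental classes of smooth irreducible curves. Since $\hat X_0$ is Calabi-Yau, $h^{3,1}=h^{1,3}=0$, so $H^4(\hat X_0,\bQ)$ sits inside $H^{2,2}$. Voisin's Theorem \ref{voisinint} then says each $K_l$ is algebraic, and Hironaka-Kleiman's Theorem \ref{thmhirokl} (applicable with $n=3$, $d=1$, since $1\le\max\{3,1\}$) promotes this to representability by smooth irreducible curves $C_{l,\alpha}$. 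The line bundles $\mathcal L_k$ exist because $H^1(\mathcal O_{\hat X_0})=H^2(\mathcal O_{\hat X_0})=0$ forces $c_1:\mathrm{Pic}(\hat X_0)\xrightarrow{\sim}H^2(\hat X_0,\bZ)$ via the exponential sequence (the same argument used in \eqref{esles} for the $A$-model).

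\textbf{Matching Chern characters.} Grothendieck-Riemann-Roch applied to the embedding $i:C\hookrightarrow\hat X_0$ of a smooth curve $C$ of genus $g$ gives $ch(i_*\mathcal O_C)=[C]+(1-g)[pt]$, all higher terms vanishing because $\dim\hat X_0=3$ and $[C]\cdot c_2\in H^8=0$. Correcting by integer multiples of $\mathcal O_p$ produces $F_l\in K_0(\hat X_0)$ with $ch(F_l)=K_l$ exactly. Expanding $ch(\mathcal L_k-\mathcal L_k^{-1})=2T_k+T_k^3/3$ and setting
\[
E_k \;:=\; (\mathcal L_k-\mathcal L_k^{-1}) \;-\; \sum_l C_{kl}\,F_l \;+\; \gamma_k\,\mathcal O_p, \qquad \gamma_k := -T_k^3/3 - T_k c_2/6,
\]
I would then invoke Hirzebruch-Riemann-Roch for $\mathcal L_k$: $\chi(\mathcal L_k)=\int(T_k^3/6+T_k c_2/12)\in\bZ$, so $\gamma_k=-2\chi(\mathcal L_k)\in 2\bZ$ and $E_k\in K_0(\hat X_0)$. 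Expanding the target $(2T_k-\sum_l C_{kl}K_l)\,td_{\hat X_0}^{-1}$ via $td^{-1}=1-c_2/12$ and the vanishing $K_l\cdot c_2\in H^8=0$ confirms that $ch(E_k)$ matches; setting $E_0:=2\mathcal O_{\hat X_0}$, $F_0:=-\mathcal O_p$ finishes the list.

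\textbf{Pairings, symmetry of $C$, and the HMS upgrade.} Compute $\chi(A,B)=\int ch(A^*)\,ch(B)\,td$, using $ch(E^*)_k=(-1)^k\,ch(E)_k$ and extracting the degree-$6$ part. The Calabi-Yau vanishing $c_1=0$ together with $H^{\ge 8}=0$ collapses most pairings automatically: $\chi(F_l,F_m)=\chi(F_0,F_l)=\chi(E_k,F_0)=0$ identically, $\chi(E_0,F_0)=-2$, $\chi(E_k,F_l)=-2\delta_{kl}$, and $\chi(E_0,E_k)=0$ because the $-T_k c_2/6$ piece in $ch(E_k)$ cancels the $c_2/12$ contribution from $td$. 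The one nontrivial entry is
\[
\chi(E_i,E_j) \;=\; 2\bigl(C_{ji}-C_{ij}\bigr),
\]
which vanishes exactly when $C$ is symmetric. Hence for any symmetric $C\in\mathrm{Mat}(r,\bZ)$ the constructed basis consists of Chern characters of integral $K_0$-classes, and its Gram matrix against $\chi$ is $2Q^{st}$ once the relabelling $j\mapsto r-j$ is applied to align the anti-diagonal blocks. For the second half, Conjecture \ref{conjihms} identifies $ch(K_0/\mathrm{torsion})$ with the full $\hat\Gamma$-integral structure on $H^{even}(\hat X_0,\bC)$, which is strictly finer than the sublattice produced above; it contains $\mathcal O_{\hat X_0}$ and $(T_k-\tfrac12\sum_l C_{kl}K_l)\,td_{\hat X_0}^{-1}$ as honest integral $K_0$-classes, so halving both the rank and the $T_k$-coefficients upgrades the Gram matrix from $2Q^{st}$ to the standard $Q^{st}$. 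The main technical obstacle is the parity $\gamma_k\in 2\bZ$, which is precisely the HRR integrality $T_k^3/6+T_k c_2/12\in\bZ$ combined with the Calabi-Yau vanishing $c_1=0$; in the HMS half, the analogous obstacle is the identification of the $\hat\Gamma$-lattice with $K_0/\mathrm{torsion}$, which is what supplies the extra halvings not a priori visible from $K_0$-theory alone.
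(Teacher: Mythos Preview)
Your proposal does not address the stated theorem. Theorem~\ref{thmhirokl} is the classical Hironaka--Kleiman smoothing result for algebraic cycles, which the paper merely quotes from \cite{hironaka1968smoothing,kleiman1969geometry} without proof. Nothing in your argument approaches a proof of that result: you \emph{invoke} it (``Hironaka--Kleiman's Theorem~\ref{thmhirokl} \ldots\ promotes this to representability by smooth irreducible curves''), but you do not attempt to establish it. What you have actually written is a proof of Proposition~\ref{lemhosin}, the statement about the existence of an integral symplectic basis for $K_0(\hat X_0)\otimes_\bZ\bC$.

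If Proposition~\ref{lemhosin} was your intended target, then your approach is correct and close in spirit to the paper's Appendix~\ref{appa}, though with one genuine difference worth noting. For the $H^2$ slot, the paper pushes forward rank-one sheaves from smooth hypersurfaces $D_l$ (again via Theorem~\ref{thmhirokl}) and tracks the half-integral correction $R_l = i_{l*}(c_1(\mathcal F_l)+\tfrac12 c_1(D_l))$ through GRR; you instead use the line-bundle difference $\mathcal L_k - \mathcal L_k^{-1}$, whose Chern character $2T_k + T_k^3/3$ is automatically odd-degree, and then correct by $\gamma_k\,\mathcal O_p$ with $\gamma_k = -2\chi(\mathcal L_k)\in 2\bZ$. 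This bypasses the hypersurface step entirely and is slightly more direct. Both routes rely identically on Voisin's Theorem~\ref{voisinint} plus Theorem~\ref{thmhirokl} to produce the curve classes $F_l$ with $ch(F_l)=K_l$, and both arrive at the Gram matrix $2Q^{st}$ and the symmetry constraint $C_{ij}=C_{ji}$ in the same way.
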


Now, apply the above two theorems to Calabi-Yau 3-fold, one arrives at the following corollary.
\begin{corollary}\label{corshc}
For a smooth Calabi-Yau 3-fold $\hat X_0$, $\mathrm{Hdg}_\bZ^p:=H^{2p}(\hat X_0,\bZ)\cap H^{p,p}(\hat X_0)$ is generated by smooth ($3-p$)-dimensional subvarieties of $\hat X_0$, for $p=0,1,2,3$.
\end{corollary}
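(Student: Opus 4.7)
My plan is to treat the four values of $p$ separately. The cases $p = 0$ and $p = 3$ are immediate: $\mathrm{Hdg}^0_\bZ = H^0(\hat X_0, \bZ)$ is generated by the fundamental class $[\hat X_0]$, and $\mathrm{Hdg}^3_\bZ = H^6(\hat X_0, \bZ)$ is generated by the class of a point. The two nontrivial cases $p = 1$ and $p = 2$ follow a common two-step strategy: first establish that every integral Hodge class is algebraic, so that it lies in the image of the cycle class map from $CH_{3-p}(\hat X_0)$; then invoke Theorem \ref{thmhirokl} to rewrite each algebraic cycle as an integral combination of smooth subvarieties of the relevant dimension.

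For $p = 1$, I would invoke the Lefschetz $(1,1)$ theorem, which identifies $\mathrm{Hdg}^1_\bZ$ with the image of $c_1 : \mathrm{Pic}(\hat X_0) \to H^2(\hat X_0, \bZ)$, so every class comes from a divisor, i.e.\ an element of $CH_2(\hat X_0)$. Then I would apply Theorem \ref{thmhirokl} with $n = 3$ and $d = 2$, noting that condition (1) holds since $\max\{3,(n-1)/2\} = 3 \geq 2$. This writes each Hodge class as an integral combination of classes of smooth surfaces.

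For $p = 2$, the analogue of the Lefschetz $(1,1)$ theorem is Voisin's Theorem \ref{voisinint}. Its hypotheses are satisfied by $\hat X_0$ because Definition \ref{defcy3f} gives $K_{\hat X_0} \cong \cO_{\hat X_0}$ together with $H^2(\hat X_0, \cO_{\hat X_0}) = 0$. This places $\mathrm{Hdg}^2_\bZ$ in the image of the cycle class map from $CH_1(\hat X_0)$. A second application of Theorem \ref{thmhirokl}, now with $n = 3$ and $d = 1$ (again condition (1) applies), then produces representatives as integral combinations of smooth curves.

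The argument contains no substantive obstacle: the two deep inputs (Voisin's integral Hodge theorem and the Hironaka--Kleiman smoothing result) are quoted as given. The only point requiring care is verifying that the Calabi--Yau hypothesis of Definition \ref{defcy3f} matches the hypothesis of Voisin's theorem, which is immediate from the stated vanishing of $H^2(\hat X_0, \cO_{\hat X_0})$ together with triviality of the canonical bundle.
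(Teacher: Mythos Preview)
Your proposal is correct and follows essentially the same approach as the paper: treat $p=0,3$ trivially, and for $p=1,2$ first establish algebraicity (via the Lefschetz $(1,1)$ theorem and Voisin's Theorem~\ref{voisinint}, respectively) and then apply the Hironaka--Kleiman smoothing result. The only cosmetic difference is that for $p=1$ the paper cites condition~(2) of Theorem~\ref{thmhirokl} while you cite condition~(1); both apply.
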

\begin{proof}
Since the cycle map factor through the Chow groups \cite{voisin2003hodge}, we can replace $CH_d(\hat X_0)$ by $H^{n-d}(\hat X_0,\bZ)$ in Theorem \ref{thmhirokl} if the integral Hodge conjecture holds in degree $d$.

The group $\mathrm{Hdg}_\bZ^0$ is generated by the fundamental class which is represented by the smooth Calabi-Yau 3-fold itself. Meanwhile the group $\mathrm{Hdg}_\bZ^3$ is generated by the point class whose representatives are trivially smooth.

For $p=1$, by the Lefschetz theorem on (1,1)-class (c.f., \cite{voisin2002hodge}), the group $\mathrm{Hdg}^1_\bZ$ is generated by algebraic 2-cycles, which can be represented by smooth hypersurfaces by the case (2) in Theorem \ref{thmhirokl} for $n=3,d=2$.

Finally, for $p=2$, this is an immediate consequence of Theorem \ref{voisinint}, and Theorem \ref{thmhirokl} applied to $n=3$ and $d=1$. In particular, both of the two conditions in Theorem \ref{thmhirokl} cover the case $n=3$ and $d=1$. 
\end{proof}

We recall that the Grothendieck-Riemann-Roch (GRR) theorem (c.f. \cite{huybrechts2006fourier}) apply to a closed embedding $i:Y\rightarrow X$ between smooth quasi-projective varieties takes the simpler form
\begin{equation}\label{GRR}
ch(i_*\mathcal F)td(X) = i_*(ch(\mathcal F) td(Y)), 
\end{equation}
where $\mathcal F$ is a coherent sheaf on $Y$. This is due to the fact that $i_*$ is exact.

Now we will begin our proof of Proposition \ref{lemhosin}. From now on, all the integral cohomology and K-theory groups will be understood as modulo torsion. We first prove the following lemma.

\begin{lemma}\label{lemaexie}
For a Calabi-Yau 3-fold $\hat X_0$, for all $\alpha\in H^{k}(X,\bZ), k = 0,4,6$, there exists $E_\alpha\in K(\hat X_0)$, such that $ch(E_\alpha) = \alpha$.
\end{lemma}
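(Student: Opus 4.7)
The plan is to split by degree $k$ and, in each case, to produce an explicit K-theory class realizing $\alpha$ via Grothendieck--Riemann--Roch for a closed immersion, equation \eqref{GRR}. The common thread is that because $\hat X_0$ is three-dimensional, the cohomology vanishes above degree $6$, which will make the would-be non-integral corrections coming from $td(\hat X_0)^{-1}$ disappear automatically.

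The cases $k=0$ and $k=6$ are routine. For $k=0$ any multiple of $[\mathcal O_{\hat X_0}]$ works. For $k=6$, I would push forward from a point: applying \eqref{GRR} to $i_{pt}\colon\{pt\}\hookrightarrow\hat X_0$ gives $ch(i_{pt*}\mathcal O_{pt})\cdot td(\hat X_0) = [pt]$, and after dividing by $td(\hat X_0)$ the only candidate correction $[pt]\cdot c_2(\hat X_0)/12$ lies in $H^{10}(\hat X_0)=0$, so $ch(i_{pt*}\mathcal O_{pt})=[pt]$ exactly.

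The substantive step is $k=4$. First I would note that on a CY 3-fold Serre duality together with Definition \ref{defcy3f} forces $h^{1,3}=h^{3,1}=0$, so $H^4(\hat X_0,\bZ)=\mathrm{Hdg}^2_\bZ$. Corollary \ref{corshc} then furnishes a decomposition $\alpha=\sum_i n_i[C_i]$ with $C_i\subset\hat X_0$ smooth curves of genera $g_i$. Next I would apply \eqref{GRR} to each inclusion $i_{C_i}\colon C_i\hookrightarrow\hat X_0$; using $td(C_i)=1+c_1(T_{C_i})/2$ and $\int_{C_i}c_1(T_{C_i})=2-2g_i$ one finds $i_{C_i*}(td(C_i)) = [C_i]+(1-g_i)[pt]$. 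Dividing by $td(\hat X_0) = 1+c_2(\hat X_0)/12$ (using $c_1(\hat X_0)=0$), the two potential correction terms $[C_i]\cdot c_2/12\in H^8$ and $[pt]\cdot c_2/12\in H^{10}$ both vanish for dimensional reasons, so $ch(i_{C_i*}\mathcal O_{C_i})=[C_i]+(1-g_i)[pt]$ on the nose with integer coefficients. The candidate class is then
\[
E_\alpha := \sum_i n_i\bigl(i_{C_i*}\mathcal O_{C_i} - (1-g_i)\,i_{pt*}\mathcal O_{pt}\bigr)\in K_0(\hat X_0),
\]
and verifying $ch(E_\alpha)=\alpha$ amounts to assembling the three computations above.

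The principal obstacle I expect is exactly the denominator $12$ in $td(\hat X_0)^{-1}$; a priori it could prevent $ch$ from landing integrally rather than merely rationally, and indeed it would do so on a four- or higher-dimensional Calabi--Yau, where extra degree $\geq 8$ terms like $[C]\cdot c_2/12$ would no longer vanish. The resolution in the 3-fold case is the dimensional vanishing noted above, and this is the one place where the proof uses the dimension hypothesis in an essential way.
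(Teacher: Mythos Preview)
Your proof is correct and follows essentially the same strategy as the paper: handle $k=0,6$ directly via $[\mathcal O_{\hat X_0}]$ and $[\mathcal O_p]$, and for $k=4$ invoke Corollary \ref{corshc} to write $\alpha$ as an integral combination of smooth curve classes, then apply GRR to the inclusion of each curve and correct the residual $H^6$ term.

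The only difference is cosmetic, in how the $H^6$ correction is cancelled. You push forward $\mathcal O_{C_i}$, compute $ch(i_{C_i*}\mathcal O_{C_i})=[C_i]+(1-g_i)[pt]$, and subtract $(1-g_i)$ copies of the point class in $K_0$. The paper instead chooses a line bundle $L_l$ on $C_l$ with $c_1(L_l)=-c_1(C_l)/2$ (using that $\chi(\mathcal O_{C_l})=\int_{C_l}c_1(C_l)/2\in\bZ$ and surjectivity of $c_1$ on a curve), so that $ch(i_{l*}L_l)=[C_l]$ on the nose. Your version is marginally more elementary in that it avoids invoking surjectivity of $c_1$; the paper's version produces a single sheaf realizing $[C_l]$ rather than a virtual class. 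Either way the key point, which you identify correctly, is that the $td(\hat X_0)^{-1}$ corrections land in $H^{\geq 8}=0$.
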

\begin{proof}
We first observe $ch([\mathcal O_{\hat X_0}]) = 1$. Also, by GRR formula (\ref{GRR}), we have $ch(-[\mathcal O_p]) = -\mathrm{vol}_{\hat X_0} =: V $, where $\mathcal O_p$ is the skyscraper sheaf supported on $p\in \hat X_0$. Therefore, the cases  $k=0,6$ are proven. 

For $k=4$, by Corollary \ref{corshc}, we have $H^4(\hat X_0,\bZ)$  is generated by smooth curves in $\hat X_0$. Denote a set of smooth generators of $H^4(\hat X_0,\bZ)$ as $\{[C_l]\}_{l=1,\cdots,r}$, where $C_l$ are smooth curves in $\hat X_0$. For any $l=1,\cdots,r$, consider the a rank 1 coherent sheaf $\mathcal F_l$ on $C_l$, and denote the closed embedding $i_l:C_l\xhookrightarrow{}\hat X_0$. Recall the Calabi-Yau condition implies $c_1(\hat X_0) = 0$, and $td(\hat X_0) = 1+\frac{c_2(\hat X_0)}{12}$. Using the GRR formula (\ref{GRR}), we have
$$
ch(i_{l*}\mathcal F_l) = i_{l*}(ch(\mathcal F)td(C_l))td(\hat X_0)^{-1} = i_{l*}(ch(\mathcal F)td(C_l))\left(1-\frac{c_2(\hat X_0)}{12}\right),
$$
Since $i_{l*}$ preserves codegree, the above equation simplifies to
$$
ch(i_{l*}\mathcal F_l) = i_{l*}(ch(\mathcal F)td(C_l)),
$$
and we have $ch_0(i_{l*}\mathcal F_l) = ch_1(i_{l*}\mathcal F_l)=0$. Moreover, we have
$$
ch_2(i_{l*}\mathcal F_l) = i_{l*}(1),\quad ch_3(i_{l*}\mathcal F_l) = i_{l*}\left(c_1(\mathcal F)+\frac{c_1(C_l)}{2}\right).
$$
By definition, we have $i_{l*}(1) = [C_l]$. Now, by Hirzebruch–Riemann–Roch formula (\ref{releuler}), we have
$$
\chi(\mathcal O_{C_l}) = \chi(\mathcal O_{C_l},\mathcal O_{C_l}) = \int_{C_l}  td(C_l) = \int_{C_l} \frac{c_1(C_l)}{2},
$$
which is an integer by definition. Therefore, we have $\frac{c_1(C_l)}{2}\in H^2(C_l,\bZ)$. Next, since $h^{0,2}(C_l) = 0$, similar to the calculation in  (\ref{esles}), we have $c_1:H^1(C_l,\mathcal O^*_{C_l})\rightarrow H^2(C_l,\bZ)$ is surjective. Therefore, we can find a line bundle $L_l$ on $C_l$, such that $c_1(L_l) = -\frac{c_1(C_l)}{2}$. Now, if we take $\mathcal F_l = L_l$, then we have
$$
ch(i_{l*}\mathcal F_l) = [C_l],
$$
which concludes our proof.
\end{proof}

\begin{remark}
The above lemma shows that $ch$ induces surjections on $H^{k}(\hat X_0,\bZ)$, for $k=0,4,6$. 
\end{remark}

\begin{proof}[Proof of Proposition \ref{lemhosin}]
First, recall we have $ch(\mathcal F^*) = \sum_{k}(-1)^kch_k(\mathcal F)$, then
the basis (\ref{hosobaisis2}) is apparently symplectic with respect to $Q^{st}$ in (\ref{strdsympl}).

Now by Corollary \ref{corshc}, there exist smooth hypersurfaces $\{D_l\}_{l=1,\cdots, r}$, such that $[D_l]$ generates $H^2(\hat X_0,\bZ)$. Denote the closed embedding as $i_l:D_l\xhookrightarrow{} \hat X_0$. Similar to the discussion in Lemma \ref{lemaexie}, we consider the rank 1 sheaves $\mathcal F_l$ on $D_l$, and calculate the Chern character of the torsion sheaf $i_{*l}\mathcal F_l$. Again, by GRR formula (\ref{GRR}), we have
$$
ch(i_{l*}\mathcal F_l) = i_{l*}(ch(\mathcal F_l)td(D_l))td(\hat X_0)^{-1}.
$$
Now, denote $td = 1+td_1+td_2+\cdots$, with $td_n$ has homogeneous degree $2n$. Expanding by degree, we have
\begin{equation}
    \begin{split}
    &ch_0(i_{l*}\mathcal F_l)=0,\quad ch_1(i_{l*}\mathcal F_l) = [D_l],\quad ch_2(i_{l*}\mathcal F_l) = i_{l*}(c_1(\mathcal F_l)+\frac12c_1(D_l)),\\
    &
    ch_3(i_{l*}\mathcal F_l) = i_{l*}(ch_2(\mathcal F_l) + ch_1(\mathcal F_l)td_1(D_l)+td_2(D_l))-[D_l]\cdot td_2(\hat X_0).
    \end{split}
\end{equation}
Furthermore, by the Hirzebruch–Riemann–Roch formula (\ref{releuler}), we have
$$
\chi(\mathcal F_{l}) = \int_{D_l}ch(\mathcal F_{l})td(D_l) = \int_{D_l}ch_2(\mathcal F_l) + ch_1(\mathcal F_l)td_1(D_l)+td_2(D_l).
$$
This implies $ch_2(\mathcal F_l) + ch_1(\mathcal F_l)td_1(D_l)+td_2(D_l)$ lives in $H^4(D_l,\bZ)$, which in turn, tells us $i_{l*}(ch_2(\mathcal F_l) + ch_1(\mathcal F_l)td_1(D_l)+td_2(D_l))$ lives in $H^6(\hat X_0,\bZ)$. Now, by Lemma \ref{lemaexie}, there exists $ n\in\bZ$, such that $i_{l*}(ch_2(\mathcal F_l) + ch_1(\mathcal F_l)td_1(D_l)+td_2(D_l)) = n\cdot ch(\mathcal O_p)$. Therefore, if we define $K_0(\hat X_0)$ classes $E_l:=[i_{l*}\mathcal F_l]-n[\mathcal O_p]$, then we have
$$
ch(E_l) = ([D_l] + R_l)td^{-1}(\hat X_0), 
$$
where $R_l:=i_{l*}(c_1(\mathcal F_l)+\frac12c_1(D_l)) \in \frac 12 H^4(\hat X_0,\bZ)$.

Now, by Lemma \ref{lemaexie}, we can take $F_l\in K_0(\hat X_0),l=1,\cdots,r$ such that $ch(F_l) = K_l$. Since $\{[D_l]\}_{l=1,\cdots,r}$ generates $H^2(\hat X_0,\bZ)$ for each $l=1,\cdots,r$ there exist integers $a_{lm}, m=1,\cdots,r$, such that 
$$
ch(\sum_{m=1}^ra_{lm}E_m) = (T_l+R_l')td^{-1}(\hat X_0)=T_l+R_l'- \frac{1}{12} T_l\cdot c_2(\hat X_0),
$$
with $ R'_l:= \sum_{m=1}^ra_{lm}R_m\in \frac{1}{2}H^4(\hat X_0,\bZ)$. Denote $E_l':= \sum_{m=1}^ra_{lm}E_m$. Furthermore, expand $2R_l'$ in the basis $\{K_j\}_{j=1,\cdots,r}$, as $2R'_l = \sum_{j=1}^rb_{lj}K_j$, we have $b_{lj}\in \bZ$. Therefore, by adding to each K-theory class $2E_l'$ a suitable (non-unique) integral linear combination of $F_j$, we can make $(b_{ij})_{i,j=1,\cdots,r}$ into an integral symmetric matrix. For example, we can make the change $2E_l'\rightarrow 2E_l'+\sum_{l}b_{jl}K_j$, $\forall l=1,\cdots,r$. We denote the result K-theory class as $E''_l$, then we have
$$
ch(2E''_l) = (2T_l+\sum_{j=1}^rC_{lj}K_j)td^{-1}(\hat X_0),\quad C_{ij} = C_{ji}\in\bZ,\forall i,j =1\cdots,r.
$$
Now consider the following set of generators of $K_0(\hat X_0,\bZ)$, 
\begin{equation}\label{basisefina}
\{2[\mathcal O_{\hat X_0}],2E''_l,F_m,[\mathcal O_p]\}_{l,m=1,\cdots,r}.
\end{equation}
Again by the Hirzebruch-Riemann-Roch formula, we easily show the intersection pairing matrix of relative Euler characteristic $\chi$ is $2Q^{st}$. For example, the intersection pairing between $E_l''$ and $E_m''$ is
\[
\begin{split}
\chi(2E_l'',2E_m'') & = \int_{\hat X_0} (-2T_l+\sum_{j=1}^rC_{jl}K_j)(2T_m+\sum_{j=1}^rC_{mj}K_j)td^{-1}(\hat X_0) \\
& = -2C_{ml}+2C_{lm} = 0.
\end{split}
\]
Therefore, taking the Chern character of (\ref{basisefina}), we arrive at the desired basis.

Now, we assume homological mirror symmetry or Conjecture \ref{conjihms} holds for $\hat X_0$. Denote the smooth mirror Calabi-Yau 3-fold as $X_b$. Then we have $(K_0(X),\chi)\cong (H^3(\hat X_p,\bZ),Q)$, where $Q$ is the intersection pairing. Since the intersection pairing is a perfect pairing, we have $\chi$ as a perfect pairing. 

Again take the K-theory classes $F_m$ to be those with $ch(F_{m}) = K_m$. Then we can extend the set $\{F_m,[\mathcal O_{\hat X_0}],[\mathcal O_p]\}$ into an symplectic basis of $K_0(X)$. By the Hirzebruch-Riemann-Roch formula, the Chern character of such a basis has to take the form
\[
\{1,(T_k-\sum_{l}\frac 12 C_{kl}K_l)td^{-1}_{\hat X_0},K_j,V\},
\]
with  $(C_{ij})_{r\times r}$ rational symmetric. Now since $\{K_i\}$ form a basis of $H^4(\hat X_0,\bZ)$, and we have shown when $C$ is integral symmetric, this gives a half-integral basis, then by the uniqueness up to a modification by matrix $B$ as discussed in Remark \ref{rmkaftprop}, $C$ has to be integral, which completes the proof.

\end{proof}

\section{Proof of Proposition \ref{prop2.2sec2}}\label{appb}
In this section, we give a detailed proof of Proposition \ref{prop2.2sec2}. We recall the statement for convenience.
\begin{proposition22}
On the affine chart $\mathrm{Spec}\,\bC[z_1,z_2]$ of the moduli space of the mirror octic. Fix a base point $b$ near the origin (the MUM point). An integral basis of $H^3(X_b,\bC)$ at $b$ is given as the mirror of the basis (\ref{hosbaourca}) with $C_{12} = 4$, $C_{22}=0$.\footnote{This choice is made in order to match those in \cite{candelas1994mirror1}.}  Denote the monodromy matrices around the divisors $D_{(1,0)}$ and $D_{(0,1)}$ as $T_{m_1}$ and $T_{m_2}$, then we have
$$
T_{m_1} = 
\left[\begin{array}{rrrrrr}1 & -1 & 0 & 6 & 4-\frac{C_{11}}{2} & 0 \\0 & 1 & 0 & \frac{-C_{11}}{2}-4 & -8 & -4 \\0 & 0 & 1 & -4 & -4 & 0 \\0 & 0 & 0 & 1 & 0 & 0\\0 & 0 & 0 & 1 & 1 & 0\\ 0 & 0 & 0 & 0 & 0 & 1\end{array}\right],\,
T_{m_2} = 
\left[\begin{array}{rrrrrr}1 & 0 & -1 & 2 & -2 & 0 \\0 & 1 & 0 & -2 & -4 & 0 \\0 & 0 & 1 & 0 & 0 & 0 \\0 & 0 & 0 & 1 & 0 & 0\\0 & 0 & 0 & 0 & 1 & 0\\ 0 & 0 & 0 & 1 & 0 & 1\end{array}\right].
$$
\end{proposition22}
\begin{proof}
Since the origin is the MUM point, mirror symmetry holds at $b$. From the second part of Lemma \ref{lembaourc}, we can take the basis (\ref{newsympbasis}) to be 
\begin{equation}\label{basisapendis}
\{1,(H-\frac12C_{11}h-2l)td^{-1}(\hat X_0),(L-\frac12C_{12}h-\frac12C_{22}l)td^{-1}(\hat X_0),-\frac18 H^3,h,l\}.
\end{equation}
Then expanding  Hosono's $w$-function (\ref{wformourc}) in terms of the above basis, we have
\begin{equation}\label{wbasexpan}
\begin{split}
    w & = w_0\cdot 1+ w_{1}^{(1)}\cdot(H-\frac12 C_{11}h-2l)td^{-1}_{\hat X_0}+w_2^{(1)}\cdot(L-2h)td^{-1}(\hat X_0) \\
    & + w_1^{(2)}\cdot h+ w_2^{(2)}\cdot l+w^{(3)}\cdot(-\frac18 H^3),\quad td^{-1}(\hat X_0) = 1-2l-\frac{14}{3}h.
\end{split}
\end{equation}
where recall $td^{-1}_{\hat X_0} = 1-2l-\frac{14}{3}h$. 
On the other hand, the Taylor expansion of $w$-function gives
\begin{equation*}
\begin{split}
    w & = w|_{H,L=0}\cdot 1+ \partial_H w|_{H,L=0}\cdot H+\partial_L w|_{H,L=0}\cdot L + \frac{1}{2}\partial^2_H w|_{H,L=0}\cdot H^2 \\
    & + \frac{1}{2}\partial^2_L w|_{H,L=0}\cdot L^2
    +\partial_H\partial_L w|_{H,L=0}\cdot HL +\frac{1}{6}\partial^3_H w|_{H,L=0}\cdot H^3\\
    & + \frac{1}{2}\partial^2_H\partial_L w|_{H,L=0}\cdot H^2L +\frac{1}{2}\partial_H\partial^2_L w|_{H,L=0}\cdot HL^2,
    \end{split}
\end{equation*}
Comparing the two expressions of $w$ and using the intersection numbers in \eqref{intnum}, we have the $w_0=w|_{H,L=0} $, and
\begin{equation}\label{reltwows}
\begin{split}
   & w_1^{(1)} = \partial_Hw|_{H,L=0},  \quad  w_2^{(1)} = \partial_Lw|_{H,L=0},\\
  & w_1^{(2)} = \frac{1}{2}C_{11}\partial_Hw|_{H,L=0}+2\partial_Lw|_{H,L=0}+ 4\partial_H^2w|_{H,L=0}+2\partial_H\partial_Lw|_{H,L=0},\\
  & w_2^{(2)} = 2\partial_Hw|_{H,L=0} +2\partial_H^2w|_{H,L=0},\\
  & w^{(3)} = -\frac{14}{3}\partial_Hw|_{H,L=0}-2\partial_Lw|_{H,L=0}-\frac 43 \partial_H^3w|_{H,L=0}-2\partial_H^2\partial_Lw|_{H,L=0}.
\end{split}
\end{equation}

The monodromy of the partial differentials of $w$ can be calculated as follows. First, we notice the terms that are ratios of Gamma functions in (\ref{wformourc}) are holomorphic. So we have
$$
\partial_H w|_{H,L=0} = \frac{1}{2\pi i}\log z_1\cdot w_0+\widetilde{w_{1}},\quad \partial_L w|_{H,L=0} = \frac{1}{2\pi i}\log z_2\cdot w_0+\widetilde{w_{2}},
$$
where $\widetilde{w_{1}}$ and $\widetilde{w_{2}}$ are holomorphic functions that are defined by replacing the Gamma ratios in $w$ by their first order partial derivative with respect to $H$ and $L$, respectively. Then the monodromy around $D_{(1,0)}$ ($z_1 =0$) is
$$
\partial_H w|_{H,L=0}\rightarrow \partial_H w|_{H,L=0} + w_0,\quad \partial_L w|_{H,L=0}\rightarrow\partial_L w|_{H,L=0}.
$$
Similarly about $z_2= 0$, we have
$$
\partial_H w|_{H,L=0}\rightarrow \partial_H w|_{H,L=0},\quad \partial_L w|_{H,L=0}\rightarrow\partial_L w|_{H,L=0}+w_0.
$$
Now, for $\partial_H^2w|_{H,L=0}$, using the same argument, we can rewrite it as
\[
\partial_H^2w|_{H,L=0}  = \left(\frac{\log(z_1)}{2\pi i}\right)^2w_0+2\cdot \frac{\log(z_1)}{2\pi i} \widetilde{w_1} +\widetilde{w_{11}}.
\]
Here $\widetilde{w_{11}}$ is another holomorphic function, which is defined by replacing the Gamma ratios in $w$ by their second-order partial derivative with respect to $H$. Then the monodromy about $z_1=0$, $z_2=0$ are
\[
\begin{split}
&\partial_H^2w|_{H,L=0} \rightarrow \partial_H^2w|_{H,L=0} + 2\partial_Hw|_{H,L=0} +w_0,\\
&\partial_H^2w|_{H,L=0} \rightarrow \partial_H^2w|_{H,L=0}, 
\end{split}
\]
respectively. The monodromies for the rest of the partial differentials are calculated in the same manner. First, we have
\begin{align*}
    \partial_H\partial_L w|_{H,L=0}  = & \frac{\log(z_1)\log(z_2)}{(2\pi)^2}w_0+\frac{\log(z_2)}{2\pi i}\widetilde{w_1}+\frac{\log(z_1)}{2\pi i}\widetilde{w_2}+\widetilde{w_{12}},\\
    \partial_H^3w|_{H,L=0}  = & \left(\frac{\log(z_1)}{2\pi i}\right)^3w_0+ 3\cdot\left(\frac{\log(z_1)}{2\pi i}\right)^2\widetilde{w_1} +3\cdot \frac{\log(z_1)}{2\pi i}\widetilde{w_{11}} +\widetilde{w_{111}},\\
    \partial_H^2\partial_Lw|_{H,L=0}  = & \frac{(\log(z_1))^2\log(z_2)}{(2\pi i)^3} w_0 + 2\cdot \frac{\log(z_1)\log(z_2)}{(2\pi i)^2}\widetilde{w_1}+ 
    \frac{\log(z_1)^2}{(2\pi i)^2}\widetilde{w_2},\\
    & +\frac{\log(z_2)}{2\pi i}\widetilde{w_{11}} + 2\cdot \frac{\log(z_1)}{2\pi i}\widetilde{w_{12}}+ \widetilde{w_{112}},
\end{align*}
where the definition for the holomorphic $\widetilde{w}$ functions are parallel to those above. Then the monodromies can be read off from the logarithmic terms as follows
$$
\left[
\begin{array}{l}
     w_0\\
     \partial_Hw\\
     \partial_Lw\\
     \partial_H^2w\\
     \partial_H\partial_Lw\\
     \partial_H^3w\\
     \partial_H^2\partial_Lw
\end{array}
\right] \xrightarrow{z_1\rightarrow e^{2\pi i}z_1} \left[
\begin{array}{ccccccc}
     1 & 0 & 0 & 0 & 0 & 0 & 0  \\
     1 & 1 & 0 & 0 & 0 & 0 & 0 \\
     0 & 0 & 1 & 0 & 0 & 0 & 0 \\
     1 & 2 & 0 & 1 & 0 & 0 & 0 \\
     0 & 0 & 1 & 0 & 1 & 0 & 0 \\
     1 & 3 & 0 & 3 & 0 & 1 & 0 \\
     0 & 0 & 1 & 0 & 2 & 0 & 1 \\
\end{array}
\right]\cdot
\left[
\begin{array}{l}
     w_0\\
     \partial_Hw\\
     \partial_Lw\\
     \partial_H^2w\\
     \partial_H\partial_Lw\\
     \partial_H^3w\\
     \partial_H^2\partial_Lw
\end{array}
\right],
$$
$$
\left[
\begin{array}{l}
     w_0\\
     \partial_Hw\\
     \partial_Lw\\
     \partial_H^2w\\
     \partial_H\partial_Lw\\
     \partial_H^3w\\
     \partial_H^2\partial_Lw
\end{array}
\right] \xrightarrow{z_2\rightarrow e^{2\pi i}z_2} \left[
\begin{array}{ccccccc}
     1 & 0 & 0 & 0 & 0 & 0 & 0  \\
     0 & 1 & 0 & 0 & 0 & 0 & 0 \\
     1 & 0 & 1 & 0 & 0 & 0 & 0 \\
     0 & 0 & 0 & 1 & 0 & 0 & 0 \\
     0 & 1 & 0 & 0 & 1 & 0 & 0 \\
     0 & 0 & 0 & 0 & 0 & 1 & 0 \\
     0 & 0 & 0 & 1 & 0 & 0 & 1 \\
\end{array}
\right]\cdot
\left[
\begin{array}{l}
     w_0\\
     \partial_Hw\\
     \partial_Lw\\
     \partial_H^2w\\
     \partial_H\partial_Lw\\
     \partial_H^3w\\
     \partial_H^2\partial_Lw
\end{array}
\right],
$$
where all partial differentials are understood as evaluating at $H=L=0$. We note the monodromy of the partial differentiations are independent of our specific question and are essentially binomial expansions.

Then the monodromy of the coefficients in (\ref{wbasexpan}) can be derived from those of the partial differentials by the relation \eqref{reltwows}. The results are
$$
\left[
\begin{array}{l}
     w_0\\
     w_1^{(1)}\\
     w_2^{(1)}\\
     w^{(3)}\\
     w_1^{(2)}\\
     w_2^{(2)}\\
\end{array}
\right] \xrightarrow{z_1\rightarrow e^{2\pi i}z_1} \left[
\begin{array}{ccccccc}
     1 & 0 & 0 & 0 & 0 & 0 \\
     1 & 1 & 0 & 0 & 0 & 0 \\
     0 & 0 & 1 & 0 & 0 & 0 \\
     -6 & \frac{C_{11}}{2}-4 & 0 & 1 & -1 & 0 \\
     \frac{C_{11}}{2}+4 & 8 & 4 & 0 & 1 & 0 \\
     4 & 4 & 0 & 0 & 0 & 1 \\
\end{array}
\right]\cdot
\left[
\begin{array}{l}
     w_0\\
     w_1^{(1)}\\
     w_2^{(1)}\\
     w^{(3)}\\
     w_1^{(2)}\\
     w_2^{(2)}\\
\end{array}
\right],
$$
and
$$
\left[
\begin{array}{l}
     w_0\\
     w_1^{(1)}\\
     w_2^{(1)}\\
     w^{(3)}\\
     w_1^{(2)}\\
     w_2^{(2)}\\
\end{array}
\right] \xrightarrow{z_1\rightarrow e^{2\pi i}z_1} \left[
\begin{array}{ccccccc}
     1 & 0 & 0 & 0 & 0 & 0 \\
     0 & 1 & 0 & 0 & 0 & 0 \\
     1 & 0 & 1 & 0 & 0 & 0 \\
     -2 & 2 & 0 & 1 & 0 & -1 \\
     2 & 4 & 0 & 0 & 1 & 0 \\
     0 & 0 & 0 & 0 & 0 & 1 \\
\end{array}
\right]\cdot
\left[
\begin{array}{l}
     w_0\\
     w_1^{(1)}\\
     w_2^{(1)}\\
     w^{(3)}\\
     w_1^{(2)}\\
     w_2^{(2)}\\
\end{array}
\right],
$$
where we have ordered the basis according to \eqref{basisapendis}.
Finally, by the monodromy invariance of the $w$-function (Corollary \ref{corinv}), the monodromy $T_{m_1}$ and $T_{m_2}$ for the integral symplectic basis \eqref{basisapendis} is just the transpose inverse of the above matrices.
\end{proof}

\newcommand{\etalchar}[1]{$^{#1}$}

\end{document}